\newcommand{\Nn}{{\mathbb{N}}}
\newcommand{\al}{\alpha}
\newcommand{\be}{\beta}
\newcommand{\W}{\mathrm{W}}
\newcommand{\Wn}{\mathrm{W}_0}
\newcommand{\Wm}{\mathrm{W}_{-1}}
\newtheorem{thm}{Theorem}[section]
\newtheorem{lem}[thm]{Lemma}
\newtheorem{cor}[thm]{Corollary}
\newtheorem{rem}[thm]{Remark}
\newtheorem{conj}[thm]{Conjecture}
\newtheorem{quest}[thm]{Question}
\begin{document}

\title{Explicit and recursive estimates of the Lambert W function}

\author{Lajos L\'oczi\thanks{{\texttt{LLoczi@inf.elte.hu}}, Department of Numerical Analysis, E\"otv\"os Lor\'and University, and Department of Differential Equations, Budapest University of Technology and Economics, Hungary. 
The project ``Application-domain specific highly reliable IT solutions'' has been implemented with the support provided from the National Research, Development and Innovation Fund of Hungary, financed under the Thematic Excellence Programme TKP2020-NKA-06 (National Challenges Subprogramme) funding scheme.}}
\date{\today}

\maketitle

\begin{abstract}
Solutions to a wide variety of transcendental equations can be expressed in terms of
the Lambert $\W$ function. The $\W$ function, occurring frequently in applications, is a non-elementary,  but now standard mathematical function implemented in all major technical computing systems. In this work, we discuss some approximations of the two real branches, $\Wn$ and $\Wm$. On the one hand, we present some analytic lower and upper bounds on $\Wn$ for large arguments that improve on some earlier results in the literature. On the other hand, we analyze two logarithmic recursions, one with linear, and the other with quadratic rate of convergence. We propose suitable starting values for the recursion with quadratic rate that ensure convergence on the whole domain of definition of both real branches. We also provide  \emph{a priori}, simple, explicit and uniform estimates on its convergence speed that enable guaranteed, high-precision approximations of $\Wn$ and $\Wm$ at any point. Finally, as an application of the $\Wn$ function, we settle a conjecture about the growth rate of the positive non-trivial solutions to the equation $x^y=y^x$.  
\end{abstract}

\noindent \textbf{Keywords:} Lambert W function; explicit estimates; recursive approximations

\section{Introduction}\label{intro}

The Lambert $\W$ function---first investigated in the 18$^{\text{th}}$ century---is defined implicitly by the transcendental equation
\[
\W(x)e^{\W(x)}=x.
\]
It has now become a standard mathematical function and it is included in all major technical computing systems. 
It appears in an increasingly growing number of applications (see, e.g., \cite{boyd} and the references therein) due to the fact that solutions to a wide variety of polynomial-exponential-logarithmic equations can be expressed in terms of the $\W$ function. 

The $\W$ function has two real, and infinitely many complex branches \cite{johansson}. The real branches are usually denoted by
\[
\Wn:[-1/e,\infty)\to[-1,\infty)
\]
and
\[
\Wm:[-1/e,0)\to(-\infty,-1], 
\]
see Figure \ref{figbranch}. Both of these are strictly monotone, and some simple special values include
$\Wn(0)=0$, $\Wn(e)=1$, $\Wn(-1/e)=-1$, or $\Wm(-1/e)=-1$.
\begin{figure}
\begin{center}
\includegraphics[width=0.66\textwidth]{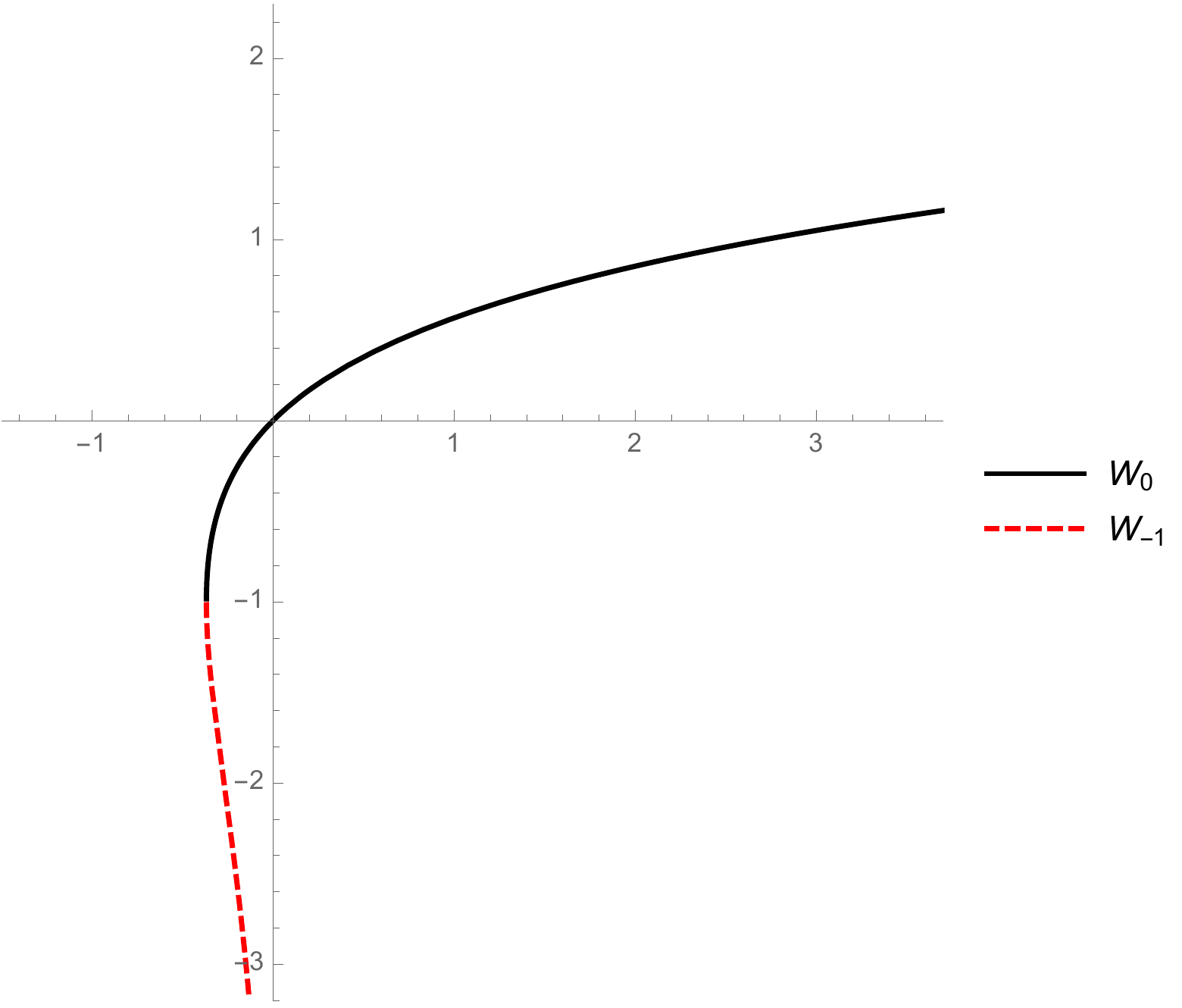}
\caption{The real branches of the $\W$ function
\label{figbranch}}
\end{center}
\end{figure}

The $\W$ function is not an elementary function \cite{bronstein}, so it is natural to ask how one can approximate it efficiently with simpler functions. 
In the literature, one can find many different representations and approximations for the real branches of the $\W$ function on various intervals, see, e.g., \cite{boyd,johansson,wolframsite,hoofar, faris} and the references therein. These include
\begin{itemize}
\item[(i)] series expansions
\begin{itemize}
\item[$\bullet$] Taylor expansions, e.g., about the origin
\begin{equation}\label{W00series}
\sum_{k=1}^{\infty} \frac{(-k)^{k-1}}{k!}x^k=x-x^2+\frac{3 x^3}{2}-\frac{8 x^4}{3}+\frac{125 x^5}{24}+{\mathcal{O}}\left(x^6\right);
\end{equation}
\item[$\bullet$]  Puiseux expansions, e.g., about the branch point $x=-1/e$;\\ 
\item[$\bullet$]  asymptotic expansions about $+\infty$, such as
\begin{equation}\label{asymptexp}
\ln(x)-\ln(\ln(x))+\sum_{k=0}^{\infty}\sum_{m=1}^{\infty} c_{k,m}\frac{(\ln(\ln(x)))^m}{(\ln(x))^{m+k}}
\end{equation}
where the coefficients $c_{k,m}$ are defined in terms of the Stirling cycle numbers;
\end{itemize}
\item[(ii)] recursive approximations
\begin{itemize}
\item[$\bullet$] the recursion 
\begin{equation}\label{lambdarecursion}
\lambda_{n+1}(x):=    \ln(x)-\ln(\lambda_n(x));
\end{equation}
\item[$\bullet$] the Newton-type iteration
\begin{equation}\label{Newtonrecursion}
\nu_{n+1}(x):=\nu_n(x)-\frac{\nu_n(x)-x e^{-\nu_n(x)}}{1+\nu_n(x)};
\end{equation}
\item[$\bullet$] the iteration
\begin{equation}\label{IBrecursion}
\be_{n+1}(x):=\frac{\be_n(x)}{1+\be_n(x)}\left(1+\ln\left(\frac{x}{\be_n(x)}\right)\right);
\end{equation}
\item[$\bullet$] the Halley-type iteration
\begin{equation}\label{Halleyrecursion}
h_{n+1}(x):=h_n(x)-\frac{h_n(x) e^{h_n(x)} -x}{e^{h_n(x)} \left(h_n(x)+1\right)-\frac{\left(h_n(x)+2\right) 
\left(h_n(x) e^{h_n(x)} -x\right)}{2 \left(h_n(x)+1\right)}};
\end{equation}
\item[$\bullet$] the Fritsch--Shafer--Crowley (FSC) scheme 
\begin{equation}\label{FSCrecursion}
f_{n+1}(x):=f_n(x)\left(1+\frac{z_n(x)[q_n(x)-z_n(x)]}{(1+f_n(x))[q_n(x)-2z_n(x)]}\right)
\end{equation}
with
\[
z_n(x):=\ln\left(\frac{x}{f_n(x)}\right)-f_n(x) \quad\text{and}\quad q_n(x):=2(1+f_n(x))\left(1+f_n(x)+\frac{2}{3}z_n(x)\right);
\]
\end{itemize}
\item[(iii)] analytic bounds on different intervals
\begin{itemize}
\item[$\bullet$] the bounds 
\begin{equation}\label{olderbound}
\ln(x)-\ln(\ln(x))+\frac{\ln(\ln(x))}{2\ln(x)}<\Wn(x)<\ln(x)-\ln(\ln(x))+\frac{e \ln(\ln(x))}{(e-1)\ln(x)},
\end{equation}
valid for $x\in(e,+\infty)$;
\item[$\bullet$] or, for example, the bounds 
\begin{equation}\label{W-1bound}
\frac{e \ln(-x)}{e-1}\le\Wm(x)\le \ln(-x)-\ln(-\ln(-x))
\end{equation}
valid for $x\in [-1/e,0)$.
\end{itemize}
\end{itemize}

As for the other (complex) branches of the $\W$ function, \cite{johansson} contains an algorithm to approximate any branch by using complex interval arithmetic together with the Arb library.\\

Now let us comment on some of the above formulae to motivate our work. 

The recursion \eqref{lambdarecursion} is based on the functional equation \eqref{functional}, and 
has appeared many times in the literature. 
The recursion \eqref{IBrecursion} was devised in \cite{boyd} (we only changed their notation from $\W_n$ to $\be_n$, since $\W_n$ usually denotes the complex branches of the $\W$ function); moreover, the authors mention that its convergence rate is quadratic, and it approximates  
$\Wn(x)$ for large $x$ better than the standard (also quadratic) Newton iteration \eqref{Newtonrecursion}. 
The Halley recursion \eqref{Halleyrecursion} has third order of convergence (in general, \eqref{Newtonrecursion} and  \eqref{Halleyrecursion} both belong to the Schr\"oder families of root-finding methods, see, e.g., \cite{petkovic}), and the FSC scheme  \eqref{FSCrecursion} converges at an even faster rate. However, the rate of convergence of \eqref{lambdarecursion} has not yet been investigated, and, more importantly,  suitable starting values have not been reported in the literature guaranteeing that these recursions are well-defined, 
nor explicit bounds on the error committed in the $n^\text{th}$ step.

The pair of bounds \eqref{olderbound}---based on the initial terms of the series \eqref{asymptexp}---appears in \cite{hoofar} (its weaker version is reproduced in our Lemma \ref{lemmaWsimple} below). For $x>e$, \cite[Section 4.3]{boyd} describes some tighter, two-sided bounds for $\Wn(x)$, obtained by applying one step of \eqref{IBrecursion} or \eqref{lambdarecursion} to a suitable initial function. These bounds contain more nested logarithms (hence, they are not of the form \eqref{asymptexp}).

Finally, when dealing with various expansions (Taylor, Puiseux or asymptotic series in the group (i) above)  in practice, one can work only with their finite truncations, so one also needs estimates of the remainder terms---estimates of this type were published only very recently \cite{johansson}.




\subsection{Summary of the results and structure of the paper}

In Section \ref{refinedapproximationsW}, we present some two-sided, explicit estimates of $\Wn(x)$ for large values of $x$.
The structure of these estimates is based on the first few terms of \eqref{asymptexp} (but their proofs do not rely on the asymptotic series). 
These results strictly refine the estimate \eqref{olderbound} for any $x>e$; moreover, instead of having an error term ${\mathcal{O}}\left(\frac{\ln(\ln (x))}{\ln (x)}\right)$ as in \eqref{olderbound}, our error terms have the form 
${\mathcal{O}}\left(\left(\frac{\ln(\ln (x))}{\ln(x)}\right)^3\right)$ and ${\mathcal{O}}\left(\frac{\ln^2(\ln (x))}{\ln^3 (x)}\right)$. 

In Section \ref{lambdarecursionsection}, we analyze the recursion \eqref{lambdarecursion} for $x>e$ large enough.  By providing a simple starting value, we show that its even- and odd-indexed subsequences converge to $\Wn(x)$ from above and below, respectively. More importantly, we give an explicit error estimate for the linear rate of convergence.

In Section \ref{IBsection}, a complete analysis of the recursion \eqref{IBrecursion} is given. Here, we propose simple and suitable starting values 
(consisting of the basic operations, logarithms, or  square roots) that guarantee monotone convergence on the full domain of definition of both real branches: 
for the branch $\Wn$ on $(e,+\infty)$, $(0,e)$, and $(-1/e,0)$, as well as for the branch $\Wm$ on 
$(-1/e,0)$. Again, the essential feature of these theorems is that the quadratic rate of convergence of \eqref{IBrecursion} is proved via explicit and uniform error estimates.
Thanks to their simplicity, the maximum number of iteration steps needed to achieve a desired precision can easily be determined in advance. We also reproduce some guaranteed, high-precision approximations of $\Wn$ in \emph{Mathematica} that were computed in a different software environment and reported in \cite{johansson}---for very large arguments (so large that their direct evaluation in  \emph{Mathematica} via its built-in function {\texttt{ProductLog}} is not possible), or for arguments very close to the branch point $x=-1/e$.

Finally, in Section \ref{sectionxyyx}, we present a simple application of the $\Wn$ function and settle a conjecture in \cite{gofen} about the growth rate of the non-trivial positive solutions of $x^y=y^x$.

To make the presentation of our results easier, all technical proofs of the theorems and lemmas are collected in Appendix \ref{proofsection}. The proofs are almost entirely of symbolic character.  As for the techniques, monotonicity arguments are typical. To tackle transcendental inequalities (e.g., ones with roots, exponential functions and logarithms simultaneously), repeated differentiation and various substitutions are used to convert them to inequalities containing only rational functions or (multivariable) polynomials, whose behavior is easier to analyze.

\subsection{Notation and some preliminary results}

The set of natural numbers is denoted by $\mathbb{N}:=\{0,1,2,\ldots\}$, and the abbreviations  
\[
L_1:=\ln \quad\text{and}\quad L_2:=\ln\circ\ln
\]
will often appear. Auxiliary objects in the proofs will sometimes carry subscripts referring to the number of the (sub)section in which they appear (for example, the polynomial $P_{\ref{lemma9007lemmaproofsection}}$ and the set ${\mathcal{S}}_{\ref{lemma9007lemmaproofsection}}$ both appear in Section \ref{lemma9007lemmaproofsection}).
 
Next, we collect some elementary results which will also be used later.
\begin{itemize}
\item[(i)] From the definition of $\Wn$ and $\Wm$, it is easily seen that the following identities are 
satisfied:
\begin{equation}\label{functional}
\Wn(x)=\ln(x)-\ln(\Wn(x))\quad \text{for } x\in(0,+\infty);
\end{equation}
\begin{equation}\label{functional2}
\Wn(x)=-\ln\left(\frac{\Wn(x)}{x}\right)\quad \text{for } x\in(-1/e,0);
\end{equation}
\begin{equation}\label{functional3}
\Wm(x)=-\ln\left(\frac{\Wm(x)}{x}\right)\quad \text{for } x\in(-1/e,0).
\end{equation}
\item[(ii)] The strict monotonicity of the function $[-1,+\infty)\ni x\mapsto x e^x$ implies that 
for any $\alpha, \beta\in [-1,+\infty)$ we have
\begin{equation}\label{monotoneequivalence}
\alpha \ \boxed{ \lesseqqgtr} \ \beta\text{\ if and only if } \alpha e^\alpha \ \boxed{ \lesseqqgtr} \ \beta e^\beta,
\end{equation}
where $\boxed{ \lesseqqgtr}$ is either ``$<$'', or ``$=$'', or ``$>$''.
\item[(iii)] The following auxiliary inequality appears in \cite{hoofar}; for the sake of completeness, we reprove it in Section \ref{sectionWsimple}. 
\begin{lem}\label{lemmaWsimple} On $(e,+\infty)$ we have
$L_1-L_2<\Wn<L_1$,
and $L_1(e)-L_2(e)=\Wn(e)=L_1(e)=1$.
\end{lem}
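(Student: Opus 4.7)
The plan is to split the inequality chain into its two halves and treat each using the tools listed just before the lemma: the functional equation \eqref{functional} for the upper estimate, and the monotonicity equivalence \eqref{monotoneequivalence} for the lower estimate. Verification at the endpoint $x=e$ is immediate: $L_1(e)=\ln(e)=1$, $L_2(e)=\ln(\ln(e))=0$, and $\Wn(e)=1$ since $1\cdot e^1=e$.

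For the upper bound $\Wn(x)<L_1(x)$ on $(e,+\infty)$, I would first note that strict monotonicity of $\Wn$ together with $\Wn(e)=1$ implies $\Wn(x)>1$ for every $x>e$, so $\ln(\Wn(x))>0$. Plugging this into the identity $\Wn(x)=\ln(x)-\ln(\Wn(x))$ from \eqref{functional} gives $\Wn(x)<\ln(x)=L_1(x)$ at once.

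For the lower bound $L_1(x)-L_2(x)<\Wn(x)$, the idea is to exploit \eqref{monotoneequivalence}. For $x>e$ one has $\ln(x)>1$ and $\ln(\ln(x))>0$, so $L_1(x)-L_2(x)>0>-1$, putting it in the admissible range; also $\Wn(x)>1>-1$. Hence the desired inequality is equivalent to
\[
\bigl(L_1(x)-L_2(x)\bigr)\,e^{L_1(x)-L_2(x)}<\Wn(x)\,e^{\Wn(x)}=x.
\]
Using $e^{L_1(x)}=x$ and $e^{-L_2(x)}=1/\ln(x)$, the left-hand side collapses to
\[
\bigl(\ln(x)-\ln(\ln(x))\bigr)\cdot\frac{x}{\ln(x)}=x-\frac{x\,\ln(\ln(x))}{\ln(x)},
\]
and the required strict inequality reduces to positivity of $x\,\ln(\ln(x))/\ln(x)$, which is immediate for $x>e$ since each of $x$, $\ln(x)$, and $\ln(\ln(x))$ is positive there.

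There is no real obstacle here: the only point that needs a little care is checking the side condition of \eqref{monotoneequivalence} (i.e.\ that both $L_1(x)-L_2(x)$ and $\Wn(x)$ lie in $[-1,+\infty)$) before applying the equivalence, and that is handled by the observation $\ln(x)>\ln(\ln(x))$ on $(e,+\infty)$. The argument is symbolic and elementary, exactly in the spirit advertised in the introduction.
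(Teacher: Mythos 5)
Your proof is correct and essentially matches the paper's: the lower bound computation $(L_1-L_2)e^{L_1-L_2}=x\bigl(1-\tfrac{\ln\ln x}{\ln x}\bigr)<x$ is the same one the paper uses. The only cosmetic difference is in the upper bound, where you read $\Wn<L_1$ directly off the functional equation \eqref{functional} via $\ln(\Wn(x))>0$, whereas the paper invokes \eqref{monotoneequivalence} to reduce it to $x<x\ln x$; these are immediately equivalent one-line arguments.
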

\end{itemize}

\section{Refined lower and upper bounds for $\Wn$ for large arguments}\label{refinedapproximationsW}

The main result of the present section is Theorem \ref{maintheorem} below. After setting up the particular form of the lower and upper estimates in this theorem, the domains of the corresponding inequalities have been optimized. To describe these
domains, first we define two constants, $x^*$ and $x^{**}$,  with the help of the following lemmas. 
\begin{lem}\label{9007lemma}
For $y\ge 1$ we set
$
f_{1}(y):=y^{1/y} \left(y^2-y \ln (y)+\ln(y)\right)-y^2$.
Then there is a unique $y^*>1$ such that $f_1(y^*)=0$. We have 
\[
f_1 \  \left\{
\begin{aligned}
  > 0  & \quad\quad \text{ on } (1,y^*), \\ 
  < 0  &  \quad\quad \text{ on } (y^*,+\infty), \\
  = 0 &  \quad\quad \text{ at the points } \{1, y^*\}. \\
\end{aligned}
\right.
\]
\end{lem}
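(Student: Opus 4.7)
The plan is to dispose of existence by routine endpoint analysis and to concentrate the real work on uniqueness through a substitution that exposes the underlying algebraic structure of the problem.

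First, direct substitution gives $f_1(1) = 1\cdot(1-0+0) - 1 = 0$. Using $(y^{1/y})' = y^{1/y}(1-\ln y)/y^2$ and the product rule, a short calculation yields $f_1'(1) = 1 > 0$, so $f_1 > 0$ on a right neighborhood of $1$. For the behavior at infinity, I would substitute the Taylor expansion $y^{1/y} = 1 + \ln y/y + (\ln y)^2/(2y^2) + O\bigl((\ln y)^3/y^3\bigr)$ into the definition of $f_1$; after the $y\ln y$ contributions cancel, the surviving leading terms give $f_1(y) = -\tfrac{1}{2}(\ln y)^2 + \ln y + o(\ln y)$, whence $f_1(y) \to -\infty$. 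Continuity and the intermediate value theorem then produce at least one zero $y^* \in (1,\infty)$.

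For uniqueness, first check that $v(y) := y^2 - (y-1)\ln y > 0$ on $[1,\infty)$ (from $v(1)=1$, $v'(1) = 2 > 0$, and $v''(y) = 2 - 1/y - 1/y^2 \geq 0$ for $y \geq 1$). The identity $y^{1/y} v(y) = y^2 \exp(h_1(y))$ with $h_1(y) := \ln v(y) - (2-1/y)\ln y$ then gives $f_1(y) = y^2\bigl(e^{h_1(y)}-1\bigr)$, so $f_1$ and $h_1$ share zeros and signs on $(1,\infty)$. The substitution $t := 1/y \in (0,1)$ collapses $h_1$ to the compact form
\[
H(t) \;:=\; h_1(1/t) \;=\; \ln\bigl(1 + t(1-t)\ln t\bigr) - t\ln t,
\]
with $H(0^+) = H(1^-) = 0$, $H(t) > 0$ for $t$ close to $1$, and $H(t) < 0$ for $t$ close to $0$ (both signs inherited from the analysis of the previous paragraph). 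Uniqueness of $y^*$ is now equivalent to uniqueness of a zero of $H$ on $(0,1)$.

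The main obstacle is the analysis of $H'$. Direct differentiation, followed by clearing the positive denominator $\rho(t) := 1 + t(1-t)\ln t$, gives
\[
H'(t) \;=\; -\frac{t\,Q(t)}{\rho(t)}, \qquad Q(t) \;:=\; (1-t)(\ln t)^2 + (3-t)\ln t + 1.
\]
The crucial observation is that $Q(t)$ is \emph{quadratic in $\ln t$} with positive leading coefficient $1-t$ and positive discriminant $(t-1)^2 + 4$; hence $Q(t) = (1-t)\bigl(\ln t - s_+(t)\bigr)\bigl(\ln t - s_-(t)\bigr)$ with $s_\pm(t) = \bigl[t-3\pm\sqrt{(t-1)^2+4}\bigr]/\bigl[2(1-t)\bigr]$. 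A short computation using $\bigl(\sqrt{(t-1)^2+4}\bigr)^2 - (t-1)^2 = 4$ shows that $s_\pm'(t) < 0$ on $(0,1)$, the requisite inequality reducing to $(t-1)^2 > 0$. Therefore $\ln t - s_\pm(t)$ is strictly increasing on $(0,1)$, running from $-\infty$ to a positive value; each branch contributes exactly one zero, and $Q$ has exactly two zeros $t_a < t_b$ in $(0,1)$. Consequently $H'$ changes sign precisely twice, yielding the shape: $H$ strictly decreases on $(0,t_a)$ from $0$ to $H(t_a) < 0$, strictly increases on $(t_a,t_b)$ to $H(t_b) > 0$, and strictly decreases on $(t_b,1)$ back to $0$. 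Therefore $H$ has a unique zero $t^* \in (t_a, t_b)$, and $y^* := 1/t^* > 1$ is the unique interior zero of $f_1$; the claimed sign pattern of $f_1$ reads off from the monotonicity structure of $H$.
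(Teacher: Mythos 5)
Your proof is correct and follows a genuinely different route from the paper's. The paper bounds $y^{1/y}$ above and below by Taylor polynomials (its Lemma~\ref{logy/yserieslemma}), shows directly that $f_1>0$ on $(1,7/2]$ and that $\lim_{+\infty}f_1=-\infty$, and then establishes $f_1'<0$ on $(7/2,+\infty)$ by reducing to the negativity of an explicit bivariate polynomial $P(y,\ln y)$ over a region and dispatching that by repeated partial differentiation. You instead write $f_1(y)=y^2\bigl(e^{h_1(y)}-1\bigr)$ and reparametrize by $t=1/y$, which turns the problem into the sign analysis of $H(t)=\ln\bigl(1+t(1-t)\ln t\bigr)-t\ln t$ on $(0,1)$; the key computation $H'(t)\rho(t)=-tQ(t)$ with $Q$ a \emph{quadratic in} $\ln t$ (positive leading coefficient $1-t$, discriminant $(t-1)^2+4>0$) exposes exactly two sign changes of $H'$, so $H$ is strictly decreasing, then increasing, then decreasing, and has exactly one interior zero. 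I verified the algebra: $H'\rho=-tQ$ with $Q(t)=(1-t)(\ln t)^2+(3-t)\ln t+1$; $s_\pm'(t)=(-D\pm 2)/\bigl((1-t)^2 D\bigr)$ with $D=\sqrt{(t-1)^2+4}>2$, so both roots are strictly decreasing; and $\ln t-s_+\to 1/2$, $\ln t-s_-\to+\infty$ as $t\to 1^-$, so each branch has exactly one zero. Your argument is more conceptual and avoids the heavy polynomial bookkeeping; what the paper's argument buys is that the same auxiliary lemmas about $y^{1/y}$ feed into Theorem~\ref{maintheorem} later, whereas your algebraic identity is specific to this lemma. Both are complete proofs.
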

Now we define $x^*:=e^{y^*}\approx 6288.69$.
\begin{lem}\label{13.26lemma}
For $y\ge 1$ we set
$
f_{2}(y):=(y-3) \ln(y)-2 y$.
Then there is a unique $y^{**}>1$ such that $f_2(y^{**})=0$. We have 
\[
f_2 \  \left\{
\begin{aligned}
  < 0  & \quad\quad \text{ on } [1,y^{**}), \\ 
  > 0  &  \quad\quad \text{ on } (y^{**},+\infty). \\
  \end{aligned}
\right.
\]
\end{lem}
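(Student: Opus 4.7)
The plan is to establish the shape of $f_2$ on $[1,+\infty)$ by analyzing its second derivative, and then conclude via the intermediate value theorem together with a monotonicity argument.

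First I would record the boundary behaviour: $f_2(1)=(1-3)\ln(1)-2=-2<0$, while $f_2(y)=y(\ln(y)-2)-3\ln(y)\to+\infty$ as $y\to+\infty$. This already guarantees at least one zero of $f_2$ in $(1,+\infty)$ by continuity.

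Next I would differentiate twice. A short computation gives
\[
f_2'(y)=\ln(y)+\frac{y-3}{y}-2=\ln(y)-1-\frac{3}{y},\qquad f_2''(y)=\frac{1}{y}+\frac{3}{y^2}.
\]
The key observation is that $f_2''>0$ on $(0,+\infty)$, so $f_2'$ is strictly increasing on $[1,+\infty)$. Since $f_2'(1)=-4<0$ and $f_2'(y)\to+\infty$ as $y\to+\infty$, there is a unique $y_0\in(1,+\infty)$ with $f_2'(y_0)=0$; consequently, $f_2$ is strictly decreasing on $[1,y_0]$ and strictly increasing on $[y_0,+\infty)$.

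Combining these facts finishes the proof. Since $f_2(1)=-2<0$ and $f_2$ is decreasing on $[1,y_0]$, we have $f_2(y)\le -2<0$ for $y\in[1,y_0]$, so no zero lies there. On $[y_0,+\infty)$, the function $f_2$ is strictly increasing and passes from a negative value (at $y_0$) to $+\infty$, so by the intermediate value theorem it has exactly one zero $y^{**}>y_0$, with $f_2<0$ on $[1,y^{**})$ and $f_2>0$ on $(y^{**},+\infty)$. The main (and only) obstacle is the sign analysis of $f_2''$, which is immediate here; the remainder of the argument is a standard convexity/IVT template and parallels the structure one would use for Lemma \ref{9007lemma}.
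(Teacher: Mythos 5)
Your proof is correct and follows essentially the same route as the paper, which disposes of Lemma~\ref{13.26lemma} in one line as ``a simple convexity argument'': $f_2(1)<0$, $\lim_{+\infty}f_2=+\infty$, and $f_2''>0$ on $[1,+\infty)$. You simply unpack the convexity argument one level further by exhibiting $f_2'$ explicitly and locating its unique zero, which is a valid (if slightly more verbose) way to reach the same conclusion.
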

We define 
$
x^{**}:=e^{y^{**}}\approx 573967.06$.\\

Lemma \ref{9007lemma} is proved in
Section \ref{lemma9007lemmaproofsection}, and the elementary proof of Lemma \ref{13.26lemma} is given in Section \ref{section34}. We can now formulate the following result.
\begin{thm}\label{maintheorem}
We have
$
L_1-L_2+\frac{L_2}{L_1}\ \boxed{ \lesseqqgtr}\ \Wn
$,
with $\boxed{ \lesseqqgtr}$ defined as 
\[
\left\{
\begin{aligned}
  > &  \quad\quad \text{ on } (e, x^*), \\
  = &  \quad\quad \text{ at the points } \{e, x^*\}, \\
  <  & \quad\quad \text{ on } (x^*,+\infty). \\ 
\end{aligned}
\right.
\]
On $(x^*,+\infty)$, we have the upper estimate
\begin{equation}\label{maintheoremupperestimate}
\Wn<L_1-L_2+\frac{L_2}{L_1}+\frac{(L_2-2) L_2}{2 L_1^2}+
\frac{L_2^3}{L_1^3}.
\end{equation}
On the interval $(x^{**},+\infty)$, the lower bound on $\Wn$ can be improved to
\begin{equation}\label{lowboundimproved}
L_1-L_2+\frac{L_2}{L_1} \ \boxed{<}\  L_1-L_2+\frac{L_2}{L_1}+\frac{(L_2-2) L_2}{2 L_1^2}-\frac{3L_2^2}{2L_1^3}<\Wn,
\end{equation}
and the smallest number $x_0>e$  such that $\boxed{<}$ holds on 
$(x_0,+\infty)$ is $x_0=x^{**}$.
\end{thm}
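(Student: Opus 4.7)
My overall strategy is to invoke the monotone equivalence \eqref{monotoneequivalence}: provided $\Wn(x)\ge -1$ and $B(x)\ge -1$ for the candidate bound $B$, the comparison $\Wn(x) \boxed{\lesseqqgtr} B(x)$ is equivalent to $x \boxed{\lesseqqgtr} B(x)\, e^{B(x)}$. I would then substitute $y := L_1(x) = \ln x$ (so $x = e^y$) and $z := L_2(x) = \ln y$, and exploit the two identities $e^{-z} = 1/y$ and $e^{z/y} = y^{1/y}$. These absorb the logarithmic pieces of each candidate bound into elementary algebraic factors, collapsing the transcendental inequality $\Wn \boxed{\lesseqqgtr} B$ into a comparison of $y^2$ with an expression in $y$ and $z$, multiplied in the refined cases by a residual exponential whose exponent tends to $0$ as $y\to\infty$.

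For the boxed comparison of $\Wn$ with $g := L_1 - L_2 + L_2/L_1$, no residual exponential survives: a short calculation gives
\[
g(x)\, e^{g(x)} \;=\; \frac{e^y}{y^2}\, y^{1/y}\, (y^2 - y z + z),
\]
so $g(x) \boxed{\lesseqqgtr} \Wn(x)$ is equivalent, after dividing through by the positive factor $e^y/y^2$, to $f_1(y) \boxed{\lesseqqgtr} 0$ (with the same direction of comparison). Lemma \ref{9007lemma} then delivers the three cases on $(e,x^*)$, at $\{e,x^*\}$, and on $(x^*,+\infty)$; the endpoint $x = e$ is in any case checked directly from $g(e) = 1 = \Wn(e)$.

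For the refined upper bound \eqref{maintheoremupperestimate} and the strict inequality $k(x) < \Wn(x)$ of \eqref{lowboundimproved} (writing $k$ for the improved lower bound), the same substitution reduces matters to inequalities of the shape
\[
y \;\boxed{\lessgtr}\; H(y,z)\, y^{1/y}\, \exp\!\bigl(R(y,z)\bigr),
\]
with $H$ rational in $y, z$, $R(y,z)\to 0$ as $y\to\infty$, and the sign inside the box fixed in each case. I would tame the residual exponential by one-sided elementary estimates of the form $1 + t \le e^t \le 1 + t + \tfrac{t^2}{2}\, e^{|t|}$ (and similarly expand $y^{1/y} = e^{z/y}$), always choosing the direction of the bound that weakens the side we need to dominate. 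After clearing positive denominators the inequality becomes polynomial in $y$ and $z$; treating $z$ and $y$ as two formal variables constrained only by $z = \ln y$, one then attacks it by repeated differentiation in $y$, exactly the technique the authors advertise in the introduction. The algebraic half of \eqref{lowboundimproved}, namely the strict inequality between the two lower bounds, is much easier: direct simplification yields
\[
k(x) - g(x) \;=\; \frac{L_2}{2\, L_1^3}\, f_2\!\bigl(L_1(x)\bigr),
\]
and since $L_2 > 0$ and $L_1 > 0$ on the relevant range, Lemma \ref{13.26lemma} instantly gives both positivity on $(x^{**},+\infty)$ and the sharpness of the threshold $x^{**} = e^{y^{**}}$.

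The hard part will be the resulting polynomial-in-$(y,z)$ inequalities for \eqref{maintheoremupperestimate} and for $k < \Wn$: because $z = \ln y$ is not algebraic over $y$, one cannot eliminate $z$ globally, and monotonicity in $y$ alone may fail to decide the sign. The proof will succeed only after isolating, in each case, a single auxiliary function of $y$ whose nonnegativity on $(y^*,+\infty)$ (respectively $(y^{**},+\infty)$) captures, through the controlled Taylor majorizations above, the desired one-sided estimate for $\Wn$. Once that function is identified, its sign should be within reach of the standard toolkit of derivative analysis, sign counting at the relevant endpoints, and reduction to rational inequalities.
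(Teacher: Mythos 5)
Your treatment of the boxed comparison agrees with the paper: apply \eqref{monotoneequivalence} (both sides being $\ge 1$ there), substitute $y = L_1(x)$, and the comparison collapses to the sign of $f_1(y)$, which is Lemma \ref{9007lemma}. Likewise your reduction of the inner inequality $\boxed{<}$ of \eqref{lowboundimproved} to $\frac{L_2}{2L_1^3}\,f_2(L_1)>0$ and Lemma \ref{13.26lemma} is exactly the paper's argument, and correctly accounts for the sharpness of the threshold $x^{**}$.

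For \eqref{maintheoremupperestimate} and the outer inequality $k<\Wn$ of \eqref{lowboundimproved}, however, you are missing the paper's central device. Pushing the refined candidates through \eqref{monotoneequivalence} again, as you propose, spawns a residual factor $\exp(R(y,z))$ that must be tamed by Taylor-type majorizations, and you yourself flag the resulting polynomial-in-$(y,z)$ inequalities as the unresolved ``hard part.'' The paper avoids this by bootstrapping through the functional equation \eqref{functional}: since $-L_1$ is strictly decreasing, the already-established $g<\Wn$ on $(x^*,+\infty)$ gives $\Wn = L_1 - L_1\circ\Wn < L_1 - L_1\circ g$, and Lemma \ref{thm23auxupperest} then compares $L_1 - L_1\circ g$ with the asserted upper bound --- a comparison between two explicit $\Wn$-free functions whose difference tends to $0$ at infinity and is shown monotone by repeated differentiation. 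Feeding \eqref{maintheoremupperestimate} back through the functional equation the same way and invoking Lemma \ref{thm23auxlowerest} gives the outer lower estimate of \eqref{lowboundimproved}. The decisive advantage is that the functional equation applies $\ln$, absorbing the nested logarithm, whereas your route applies $\exp$, creating a transcendental factor with no ready monotone structure. Your strategy is not unsound in principle, but without the bootstrap you have not found a tractable path to the key one-sided inequalities, so the proposal is incomplete precisely where the theorem's content lies.
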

\begin{rem}
It can be shown that the estimates in Theorem \ref{maintheorem} are strictly sharper than those in \eqref{olderbound} for any $x>e$.
\end{rem}

The proof of Theorem \ref{maintheorem} is given in Section \ref{section34}, and it relies on the identity \eqref{functional}, and on the fact that the function $-L_1$ is strictly decreasing: if one has a lower estimate of $\Wn$, then \eqref{functional} yields an upper estimate, and vice versa.

By repeatedly applying this bootstrap procedure, we obtain the sequence of two-sided estimates presented in Section \ref{lambdarecursionsection}. In Theorem \ref{maintheorem}, the bootstrap argument is used only two times. In any case, logarithms nested to several levels will soon appear. The estimates
\eqref{maintheoremupperestimate}--\eqref{lowboundimproved} have been devised to contain only $L_1$ and $L_2$, and to conjecture them, the first few terms of the asymptotic expansion \eqref{asymptexp} 
have been used.

For a recent, related and general result, see \cite[Theorem 2]{johansson}. In that theorem, an error term in explicit form is given when the double series in the asymptotic expansion \eqref{asymptexp} is truncated at some indices, and the modulus of the argument of the $\W$ function is sufficiently large. Our Theorem \ref{maintheorem} presents some simple explicit lower and upper bounds for the $\Wn$ branch. The proof of Theorem \ref{maintheorem} is a direct one, and is independent of the proof of \cite[Theorem 2]{johansson}---that proof relies on the convergence of the asymptotic series \eqref{asymptexp} on a certain subset of the complex plane.

\section{A linearly convergent recursion for $\Wn$ for large arguments}\label{lambdarecursionsection}

In this section, we analyze the recursion \eqref{lambdarecursion}: with some starting value to be proposed below, an explicit, linear convergence estimate is proved for large enough arguments. 

For any $x\ge e$ and $n\in\mathbb{N}$
let us define
\begin{equation}\label{lambdarecursiondef}
\left\{
\begin{aligned}
  \lambda_0(x):=  & \ln(x), \\ 
  \lambda_{n+1}(x):=  &  \ln(x)-\ln(\lambda_n(x)). 
 \end{aligned}
\right.
\end{equation}
\noindent Clearly,  $\lambda_n(e)=1=\Wn(e)$ for all $n\in\mathbb{N}$. For $x>e$, the lemma below shows
that $\lambda_n$ is well-defined, and its even and odd subsequences ``sandwich'' the
Lambert function.
\begin{lem}\label{sandwichlemma}
For any fixed $x>e$ and $n\in\mathbb{N}$, 
the number $\lambda_n(x)$ is real, and satisfies
\begin{equation}\label{lambdatrivialbounds}
1<\lambda_n(x)<\frac{x}{e}
\end{equation}
and
\begin{equation}\label{lambdasandwich}
\lambda_{2n+1}(x)< \Wn(x)< \lambda_{2n}(x).
\end{equation}
\end{lem}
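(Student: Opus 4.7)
I would prove both statements by induction on $n$, handling \eqref{lambdatrivialbounds} first (which incidentally guarantees that every $\lambda_n(x)$ is real, since the argument of each nested logarithm stays strictly positive), and then deducing the sandwich \eqref{lambdasandwich} from \eqref{lambdatrivialbounds}, Lemma \ref{lemmaWsimple}, and the strict monotonicity of $\ln$.

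For \eqref{lambdatrivialbounds}, the base case is $\lambda_0(x)=\ln(x)$; since $x>e$ we immediately have $\ln(x)>1$, and the standard inequality $\ln(x)<x/e$ on $(e,+\infty)$ (which follows from examining $g(x):=x/e-\ln(x)$, whose unique critical point on $(0,+\infty)$ is a minimum at $x=e$ with $g(e)=0$) gives $\lambda_0(x)<x/e$. For the inductive step, assume $1<\lambda_n(x)<x/e$. Then $0<\ln(\lambda_n(x))<\ln(x/e)=\ln(x)-1$, and substituting into the recursion in \eqref{lambdarecursiondef} yields $1<\lambda_{n+1}(x)<\ln(x)<x/e$, as desired.

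For \eqref{lambdasandwich}, the base case $n=0$ is exactly Lemma \ref{lemmaWsimple}, since $\lambda_0(x)=L_1(x)$ and $\lambda_1(x)=L_1(x)-L_2(x)$. For the inductive step, introduce $\varphi(y):=\ln(x)-\ln(y)$, which is strictly decreasing on $(0,+\infty)$. By the functional equation \eqref{functional}, $\Wn(x)$ is a fixed point of $\varphi$; by \eqref{lambdarecursiondef}, $\varphi(\lambda_n(x))=\lambda_{n+1}(x)$. Since $\Wn(x)>\Wn(e)=1>0$ (by strict monotonicity of $\Wn$) and $\lambda_n(x)>1>0$ for every $n$ (by the bound just established), all values involved lie in the domain where $\varphi$ is strictly decreasing. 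Applying $\varphi$ to the inductive hypothesis $\lambda_{2n+1}(x)<\Wn(x)$ reverses the inequality and gives $\lambda_{2n+2}(x)>\Wn(x)$; applying $\varphi$ once more to $\Wn(x)<\lambda_{2n+2}(x)$ yields $\lambda_{2n+3}(x)<\Wn(x)$, completing the induction.

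The only subtle point, and the reason \eqref{lambdatrivialbounds} must be proved first, is the need to keep every iterate strictly positive so that the monotonicity of $\ln$ can be invoked legitimately at each step; the specific upper bound $x/e$ (rather than something cruder) is chosen precisely so that the inductive step for the upper bound of \eqref{lambdatrivialbounds} closes cleanly, via $\ln(x)<x/e$ on $(e,+\infty)$.
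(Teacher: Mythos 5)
Your proof is correct and follows essentially the same route as the paper: induction on $n$ to establish $1<\lambda_n(x)<x/e$ (same base case $1<\ln(x)<x/e$, same inductive step), then induction on $n$ for the sandwich using Lemma \ref{lemmaWsimple} as base case and the strictly decreasing map $y\mapsto\ln(x)-\ln(y)$ together with the functional equation \eqref{functional} to flip the inequalities twice per step. The only cosmetic difference is that you name the reflecting map $\varphi$ explicitly, and you observe that only one side of the inductive hypothesis is actually needed; the paper carries both.
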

The proof of the lemma is found in Section \ref{sandwichlemmaproof}. The main result of the present section is the following theorem about the convergence and convergence speed of the
recursion \eqref{lambdarecursiondef}. The constant $x^{***}\in (5.580,5.581)$
appearing in the theorem is 
the unique solution $x>e$ to the equation \[L_1(x)-L_2(x)=\sqrt{2\ln(2)};\] hence, for $x>x^{***}$ we have 
$\frac{\sqrt{2\ln(2)}}{\ln(x)-\ln(\ln(x))}\in(0,1)$.

\begin{thm}\label{lambdatheorem} 
Let us fix any $x>x^{***}$. Then the sequence $\lambda_{n}$ defined by \eqref{lambdarecursiondef} converges and $\displaystyle\lim_{n\to+\infty}\lambda_n(x)=\Wn(x)$. Moreover,  
for any $n\in\mathbb{N}$ we have the error estimate
\begin{equation}\label{lambdaestimate}
0< \lambda_{2n}(x)-\Wn(x)\le \left(\frac{\sqrt{2\ln(2)}}{\ln(x)-\ln(\ln(x))}\right)^{2n}\ln(\ln(x)).
\end{equation}
\end{thm}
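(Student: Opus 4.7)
The plan is to establish the two-step contraction
\[
\lambda_{2n+2}(x) - \Wn(x) \le \frac{\lambda_{2n}(x) - \Wn(x)}{(L_1(x) - L_2(x))^2}
\]
by two applications of the mean value theorem, and then to iterate it starting from the base bound $\lambda_0(x) - \Wn(x) = L_1(x) - \Wn(x) < L_2(x)$ provided by Lemma \ref{lemmaWsimple}. Since $2\ln 2 > 1$, this contraction immediately implies the weaker factor $(2\ln 2)/(L_1 - L_2)^2$, whose $n$-fold iteration is exactly the right-hand side of \eqref{lambdaestimate}. The convergence $\lambda_n \to \Wn$ for $x > x^{***}$ then follows for free: for such $x$ the contraction factor is strictly less than one, so $\lambda_{2n} \to \Wn$, and then $\lambda_{2n+1} \to \Wn$ by the sandwich \eqref{lambdasandwich}.

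To derive the contraction, I would use the functional identity \eqref{functional} in the form $\Wn = L_1 - \ln\Wn$. Combined with the recursion \eqref{lambdarecursiondef}, this yields
\[
\lambda_{2n+2} - \Wn = \ln\Wn - \ln\lambda_{2n+1} \qquad\text{and}\qquad \Wn - \lambda_{2n+1} = \ln\lambda_{2n} - \ln\Wn,
\]
both quantities positive by the sandwich \eqref{lambdasandwich}. Applying the mean value theorem to $\ln$ on each equation produces $\xi_1 \in (\lambda_{2n+1}, \Wn)$ and $\xi_2 \in (\Wn, \lambda_{2n})$ with
\[
\lambda_{2n+2} - \Wn = \frac{\Wn - \lambda_{2n+1}}{\xi_1} = \frac{\lambda_{2n} - \Wn}{\xi_1 \xi_2}.
\]
It now suffices to show $\xi_1, \xi_2 \ge L_1 - L_2$: the estimate for $\xi_2$ is immediate from $\xi_2 > \Wn \ge L_1 - L_2$ (Lemma \ref{lemmaWsimple}), while $\xi_1 > \lambda_{2n+1} = L_1 - \ln\lambda_{2n} \ge L_1 - L_2$ provided $\lambda_{2n} \le L_1$.

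The only nontrivial ingredient is thus the a priori bound $\lambda_{2n} \le L_1$, which I would handle by a short induction. The case $n = 0$ is immediate since $\lambda_0 = L_1$; for the inductive step, observe that $L_1(x) - L_2(x) > 1$ holds for every $x > e$ (because $x - e\ln x$ attains its minimum $0$ at $x = e$), hence $\lambda_{2k+1} \ge L_1 - L_2 > 1$, which gives $\ln\lambda_{2k+1} > 0$ and so $\lambda_{2k+2} = L_1 - \ln\lambda_{2k+1} < L_1$. Feeding the contraction into a straightforward induction on $n$ from $\lambda_0 - \Wn < L_2$ then produces $\lambda_{2n} - \Wn \le L_2 \cdot (L_1 - L_2)^{-2n} \le L_2 \cdot (\sqrt{2\ln 2}/(L_1-L_2))^{2n}$, which is \eqref{lambdaestimate}. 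The main obstacle is mild---really just bookkeeping---and amounts to maintaining the upper bound $\lambda_{2n} \le L_1$ so that the MVT contraction factor $(L_1 - L_2)^{-2}$ remains valid at every step.
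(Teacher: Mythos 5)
Your proof is correct, and it takes a genuinely different route from the paper's. The paper proves a somewhat awkward intermediate statement (Lemma \ref{xtildestarlemma}) of the form
\[
\lambda_{2n}(x)-\Wn(x)\le (2\ln 2)^n\,\frac{\ln(x)-\Wn(x)}{\Wn^{2n}(x)},
\]
established by an induction whose two halves use \emph{asymmetric} logarithm inequalities: $\ln(1+\widetilde z)\le\widetilde z$ in the upward direction, and the bespoke concavity inequality of Lemma \ref{2ln2lemma}, $-z\le\ln\bigl(1-\tfrac{z}{2\ln 2}\bigr)$ for $z\in[0,\ln 2]$, in the downward direction. The constant $2\ln 2$ in the theorem's bound arises organically from that lemma, and this is also the source of the side constraint $x\ge\widetilde x^*\approx 3.83$ (needed so that the argument of the logarithm in Lemma \ref{2ln2lemma} actually lies in $[0,\ln 2]$, which relies on Lemma \ref{Winductiveestimatelemma}). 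You avoid all of this by applying the mean value theorem twice, obtaining the clean two-step contraction
\[
\lambda_{2n+2}(x)-\Wn(x)=\frac{\lambda_{2n}(x)-\Wn(x)}{\xi_1\xi_2}, \qquad \xi_1\in(\lambda_{2n+1},\Wn),\ \ \xi_2\in(\Wn,\lambda_{2n}),
\]
then bounding both $\xi_1,\xi_2>L_1-L_2$. Your bookkeeping step $\lambda_{2n}\le L_1$ is sound (and in fact follows at once from Lemma \ref{sandwichlemma}: $\lambda_{2n-1}>1$ gives $\lambda_{2n}=L_1-\ln\lambda_{2n-1}<L_1$ for $n\ge 1$, with equality at $n=0$). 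Your approach yields the strictly sharper per-two-step factor $(L_1-L_2)^{-2}$, works for all $x>e$ rather than only $x\ge\widetilde x^*$, and recovers \eqref{lambdaestimate} by the trivial relaxation $1\le 2\ln 2$; the only cost is that the specific constant $\sqrt{2\ln 2}$ in the theorem statement now looks arbitrary rather than canonical, but that is a cosmetic point. In short, your MVT route is more elementary, sharper, and more uniform than the paper's.
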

The proof of this theorem is given in Section \ref{sectionlambdatheoremproof}. Now, by combining \eqref{lambdasandwich} and \eqref{lambdaestimate}, the following result is obtained.
\begin{cor}
For any given $x>5.581$ and tolerance $\varepsilon>0$, let us choose $n$ such that
\[
0<\left(\frac{\sqrt{2\ln(2)}}{L_1(x)-L_2(x)}\right)^{2n}L_2(x)<\varepsilon.
\] 
Then
\[
\Wn(x)\in \Big[\lambda_{2n}(x)-\varepsilon , \lambda_{2n}(x)\Big).
\] 
\end{cor}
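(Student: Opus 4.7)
The plan is to combine the sandwich inequality from Lemma \ref{sandwichlemma} with the explicit error bound from Theorem \ref{lambdatheorem}. Since the hypothesis $x > 5.581$ exceeds $x^{***} \in (5.580, 5.581)$ (and in particular $x > e$), both results are directly applicable.

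First I would invoke the upper half of the sandwich \eqref{lambdasandwich}, namely $\Wn(x) < \lambda_{2n}(x)$. This strict inequality supplies the right endpoint of the target interval and justifies the half-open ``$)$'' on that side.

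Next I would apply the error estimate \eqref{lambdaestimate}, which yields
\[
0 < \lambda_{2n}(x) - \Wn(x) \le \left(\frac{\sqrt{2\ln(2)}}{L_1(x)-L_2(x)}\right)^{2n} L_2(x).
\]
By the choice of $n$ in the hypothesis, the right-hand side is strictly smaller than $\varepsilon$, so $\Wn(x) > \lambda_{2n}(x) - \varepsilon$. Combining this with the upper bound from the previous step gives
\[
\lambda_{2n}(x) - \varepsilon < \Wn(x) < \lambda_{2n}(x),
\]
which is precisely the desired inclusion $\Wn(x) \in [\lambda_{2n}(x)-\varepsilon, \lambda_{2n}(x))$ (the bracket ``$[$'' holds a fortiori, since the inequality is actually strict).

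There is really no substantive obstacle here: the corollary is a one-line consequence of the theorem and lemma already proved. The only point worth mentioning is that such an $n$ always exists for any $\varepsilon > 0$, because $\frac{\sqrt{2\ln(2)}}{L_1(x)-L_2(x)} \in (0,1)$ for $x > x^{***}$ (as noted before the theorem) and $L_2(x) > 0$ for $x > e$, so the geometric factor $\left(\frac{\sqrt{2\ln(2)}}{L_1(x)-L_2(x)}\right)^{2n} L_2(x)$ tends to $0$ as $n \to \infty$.
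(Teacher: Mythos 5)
Your proof is correct and matches the paper's intent exactly: the paper introduces the corollary with the single line ``by combining \eqref{lambdasandwich} and \eqref{lambdaestimate}, the following result is obtained,'' which is precisely the combination you spell out. The extra remark about the existence of such an $n$ is a sensible addition but not a departure from the paper's route.
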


It is also seen that the sequence $\lambda_{n}(x)$ approximates $\Wn(x)$ efficiently for large arguments: for each fixed $x>5.581$, the right-hand side 
of \eqref{lambdaestimate} converges to $0$
exponentially fast as $n\to +\infty$, and the speed of convergence improves as $x$ is chosen 
closer and closer to $+\infty$. 

\begin{rem}
Regarding the estimate \eqref{lambdaestimate}, we actually prove a slightly stronger statement in Section \ref{sectionlambdatheoremproof}, and the constraint $x>5.581$ could also be relaxed, see Lemma \ref{xtildestarlemma}. However, the estimate given in \eqref{lambdaestimate} is more explicit since its 
right-hand side does not contain $\Wn$. 
On the other hand, with some more work, one can prove that $\lambda_n(x)$ converges to $\Wn(x)$ also for $x\in(e,x^{***}]$, but this will not be pursued in the present paper because Section \ref{IBsection} will describe a more effective recursion.
\end{rem}

\begin{rem}
Numerical experiments indicate that for any fixed $x>e$ we have
\begin{equation}\label{Wlimitrelation}
\lim_{n\to+\infty} \frac{\lambda_n(x)-\Wn(x)}{\Wn(x)-\lambda_{n+1}(x)}=\Wn(x).
\end{equation}
In fact, Theorem \ref{lambdatheorem} was motivated by discovering \eqref{Wlimitrelation} first. Now as we know
that $\lambda_n$ converges pointwise to $\Wn$ on, say, $\left(x^{***}, +\infty\right)$, we can easily prove \eqref{Wlimitrelation} on this interval. Indeed, let us fix any $x>x^{***}$ and notice that
 $\Wn(x)-\lambda_{n+1}(x)\ne 0$ due to Lemma \ref{sandwichlemma}. Then the definition $\lambda_{n+1}(x)=
\ln(x)-\ln(\lambda_n(x))$ implies $\lambda_n(x)=x \exp(-\lambda_{n+1}(x))$, and from the definition of $\Wn$ we have $\Wn(x)=x\exp(-\Wn(x))$. Therefore, by using $-\lambda_{n+1}\to -\Wn$ and the differentiability of $\exp$, we get
\[
\frac{\lambda_n(x)-\Wn(x)}{\Wn(x)-\lambda_{n+1}(x)}=
x\cdot\frac{\exp(-\lambda_{n+1}(x))-\exp(-\Wn(x))}{-\lambda_{n+1}(x)-(-\Wn(x))}\to x \exp'(-\Wn(x))=\Wn(x)
\]
as $n\to +\infty$, completing the proof of \eqref{Wlimitrelation} for $x>x^{***}$.
\end{rem}

\section{A quadratically convergent recursion for $\Wn$ and $\Wm$ on their full domains of definition}\label{IBsection}

In this section, we analyze the recursion \eqref{IBrecursion} by proposing some starting values on each subinterval, then prove explicit, quadratic convergence estimates.

\subsection{Convergence to $\Wn$ on the interval $(e,+\infty)$}\label{IBconvx>e}

Due to $\Wn(e)=1$, let us fix an arbitrary $x>e$ in this section. Here we propose the following starting value:
\begin{equation}\label{x>estart}
\left\{
\begin{aligned}
  \be_0(x):=  & \ln(x)-\ln(\ln(x)), \\ 
  \be_{n+1}(x):=  &  \frac{\be_n(x)}{1+\be_n(x)}\left(1+\ln\left(\frac{x}{\be_n(x)}\right)\right)\quad (n\in\mathbb{N}). 
\end{aligned}
\right.
\end{equation}

\begin{lem}\label{lem41}
For any $x>e$, the recursion \eqref{x>estart}  satisfies 
\[
0<\be_n(x)<\be_{n+1}(x)<\Wn(x)\quad (n\in\Nn).
\]
\end{lem}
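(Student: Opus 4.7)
The plan is to recast the recursion as iteration of a single function $g=g_x$ defined on $(0,\infty)$ by
\[
g(t):=\frac{t}{1+t}\left(1+\ln\!\left(\frac{x}{t}\right)\right),
\]
so that $\be_{n+1}(x)=g(\be_n(x))$, and then to exploit two elementary features of $g$: the location of its unique interior critical point, and the set where $g(t)>t$. Both features are encoded in a single derivative computation.

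After a routine simplification one finds
\[
g'(t)=\frac{\ln(x/t)-t}{(1+t)^2}.
\]
The numerator vanishes precisely when $t e^t=x$, so on $(0,\infty)$ the only critical point is $t=\Wn(x)$, with $g$ strictly increasing on $(0,\Wn(x))$ and strictly decreasing on $(\Wn(x),\infty)$. Substituting the identity $\Wn(x)=\ln(x/\Wn(x))$ coming from \eqref{functional} immediately gives $g(\Wn(x))=\Wn(x)$, so $\Wn(x)$ is simultaneously the unique maximizer of $g$ and a fixed point. A similar manipulation yields the equivalence
\[
g(t)>t \iff \ln(x/t)>t \iff t e^t<x \iff t<\Wn(x)
\]
for $t>0$, using the strict monotonicity of $s\mapsto s e^s$ on $[-1,\infty)$ recorded in \eqref{monotoneequivalence}.

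With these two ingredients the lemma follows by induction on $n$. For the base case I need $0<\be_0(x)<\Wn(x)$ when $x>e$: positivity is immediate because $\ln(x)>1>\ln(\ln(x))$, and the upper bound $\ln(x)-\ln(\ln(x))<\Wn(x)$ is precisely Lemma \ref{lemmaWsimple}. For the inductive step, assuming $0<\be_n(x)<\Wn(x)$, strict monotonicity of $g$ on $(0,\Wn(x))$ gives
\[
\be_{n+1}(x)=g(\be_n(x))<g(\Wn(x))=\Wn(x),
\]
while the equivalence above gives $\be_{n+1}(x)=g(\be_n(x))>\be_n(x)>0$, closing the induction. No step is genuinely hard; the only thing to be careful about is that $\ln(x/\be_n(x))$ be well-defined at each stage, which is guaranteed by the positivity clause of the induction hypothesis. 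The main conceptual observation is the clean form of $g'$, which simultaneously packages the monotonicity of $g$ and identifies its unique fixed point on $(0,\infty)$.
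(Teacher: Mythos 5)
Your proof is correct in substance, but it takes a genuinely different route from the paper in the bounding step, and there is one small slip to flag. The slip: your justification of positivity of $\be_0(x)$, namely ``$\ln(x)>1>\ln(\ln(x))$,'' is false for $x>e^e$. Positivity still holds, of course --- $L_1(x)-L_2(x)>0$ because $y>\ln y$ for all $y>0$ (apply with $y=\ln x$), and indeed the paper's Lemma \ref{elementarylemma}, inequality \eqref{ineq1}, gives the stronger $L_1-L_2>1$ on $(e,+\infty)$ --- so this is an erroneous justification of a true fact rather than a gap.

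As for the method: your derivative computation $g'(t)=\frac{\ln(x/t)-t}{(1+t)^2}$ is correct, and the two consequences you draw (monotonicity of $g$ on $(0,\Wn(x))$ with $g(\Wn(x))=\Wn(x)$, and $g(t)>t\iff t<\Wn(x)$) are exactly what is needed. The step $\be_n<\be_{n+1}$ coincides with the paper's Step 1, which also reduces to $\ln(x/\be_n(x))>\be_n(x)$ via \eqref{monotoneequivalence}. The step $\be_{n+1}<\Wn$ is where you diverge: you argue via monotonicity of $g$ on $(0,\Wn(x))$ together with $g(\Wn(x))=\Wn(x)$, whereas the paper instead rewrites $\Wn(x)-\be_{n+1}(x)$ into the closed form $\frac{\Wn(x)}{1+\be_n(x)}\bigl(y\ln y+1-y\bigr)$ with $y=\be_n(x)/\Wn(x)\in(0,1)$ (this is identity \eqref{W0-betan+1identity}) and then invokes the elementary inequality $y\ln y+1-y>0$ of Lemma \ref{0lemma}. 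Your route is conceptually cleaner for this lemma in isolation --- a single derivative computation packages both the monotonicity of $g$ and its unique interior fixed point. The paper's route, by contrast, looks more laborious here, but the identity \eqref{W0-betan+1identity} is precisely what feeds into the proof of the quadratic error estimate in Lemma \ref{sect41lem42} (Section \ref{section412ndproof}), so the algebraic decomposition earns its keep one lemma later. Your approach would need to recover something equivalent to \eqref{W0-betan+1identity} anyway to get the quantitative rate.
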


The proof of this lemma is found in Section \ref{section411stproof}. The lemma says, in particular, that the recursion \eqref{x>estart} is well-defined and real-valued. In the remainder of Section \ref{IBconvx>e}, we show that
\begin{equation}\label{section41conv}
\lim_{n\to+\infty}\be_n(x)=\Wn(x).
\end{equation}
We prove the convergence by giving some explicit error estimates as follows. 

We start with the inductive step. The proof of the following lemma is given in Section \ref{section412ndproof}. 
\begin{lem}\label{sect41lem42}
For any $x>e$ and $n\in\mathbb{N}$, we have
\begin{equation}\label{sect41lem42ineq}
0<\Wn(x)-\be_{n+1}(x)<\frac{(\Wn(x)-\be_n(x))^2}{(1+\be_n(x))\Wn(x)}.
\end{equation}
\end{lem}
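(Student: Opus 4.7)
The left inequality $0<\Wn(x)-\be_{n+1}(x)$ is immediate from Lemma \ref{lem41}, so the task is the upper bound. My plan is to reduce it to the elementary logarithmic inequality $\ln(t)>1-1/t$ for $t>1$.

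I would abbreviate $W:=\Wn(x)$ and $b:=\be_n(x)$; by Lemma \ref{lem41} we have $0<b<W$. Using the functional equation \eqref{functional}, $\ln(x)=W+\ln(W)$, which rewrites the logarithmic term in the recursion as
\[
\ln\!\left(\frac{x}{b}\right)=W+\ln\!\left(\frac{W}{b}\right).
\]
Substituting this into the defining formula \eqref{x>estart} for $\be_{n+1}(x)$ and computing the difference, I expect a clean cancellation:
\[
W-\be_{n+1}(x)=W-\frac{b}{1+b}\!\left(1+W+\ln\!\left(\frac{W}{b}\right)\right)=\frac{W-b-b\,\ln(W/b)}{1+b}.
\]

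Next I would clear denominators in the inequality to be proved: multiplying \eqref{sect41lem42ineq} by the positive quantity $(1+b)W$ and using $W(W-b)-(W-b)^2=b(W-b)$, the desired bound collapses to
\[
b(W-b)<bW\ln\!\left(\frac{W}{b}\right),
\]
equivalently, after dividing by $bW>0$ and setting $t:=W/b>1$, to the classical inequality $\ln(t)>1-1/t$ for $t>1$. This last step is standard: the function $g(t):=\ln(t)-1+1/t$ satisfies $g(1)=0$ and $g'(t)=(t-1)/t^2>0$ for $t>1$, so $g(t)>0$ strictly, which is exactly what is needed.

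There is no real obstacle here; the only part that requires care is the algebraic reduction, in particular verifying that after applying the functional equation the computation of $W-\be_{n+1}(x)$ genuinely collapses to $(W-b-b\ln(W/b))/(1+b)$ and that the bound reduces to the classical logarithmic inequality. Strictness throughout is preserved because $b<W$ gives $t>1$, so $\ln(t)>1-1/t$ holds with strict inequality.
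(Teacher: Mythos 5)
Your proof is correct and follows essentially the same route as the paper's: the identity $W-\be_{n+1}=\bigl(W-b-b\ln(W/b)\bigr)/(1+b)$ is exactly the paper's identity \eqref{W0-betan+1identity} (which the paper reuses from the proof of Lemma \ref{lem41}), and your closing inequality $\ln(t)>1-1/t$ for $t=W/b>1$ is the same elementary bound the paper invokes in the equivalent form $\ln(1-z)<-z$ with $z=(W-b)/W$. The only difference is cosmetic (you clear denominators before applying the log bound, the paper applies it directly to the identity), so this matches the paper's argument.
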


The next lemma describes some simple estimates for the starting value. Its proof is found in Section \ref{section413rdproof}.
\begin{lem}\label{sect41lem43}
For any $x>e$, one has
\begin{equation}\label{lem431stest}
0<\Wn(x)-\be_0(x)<\frac{e}{e-1}\frac{\ln(\ln (x))}{\ln(x)}.
\end{equation}
In particular, with $\kappa_1:=\ln\left(1+1/e\right)\in(0.31,0.32)$ and for any $x>e$
\begin{equation}\label{lem432ndest}
0<\Wn(x)-\be_0(x)\le \kappa_1.
\end{equation}
\end{lem}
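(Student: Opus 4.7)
The plan is to handle the two inequalities in \eqref{lem431stest} separately, and then deduce \eqref{lem432ndest} via a change of variable that reduces the uniform bound to a textbook one-variable optimization.

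\textbf{For \eqref{lem431stest}.} Since $\be_0 = L_1 - L_2$, the positivity $\Wn(x) - \be_0(x) > 0$ on $(e,+\infty)$ is exactly the lower bound $L_1 - L_2 < \Wn$ supplied by Lemma \ref{lemmaWsimple}. For the upper estimate, I would simply invoke the sharper two-sided bound \eqref{olderbound} from \cite{hoofar}, namely $\Wn(x) < L_1(x) - L_2(x) + \tfrac{e\,L_2(x)}{(e-1)\,L_1(x)}$ on $(e,+\infty)$; subtracting $\be_0(x)$ from both sides yields $\Wn(x) - \be_0(x) < \tfrac{e}{e-1}\cdot\tfrac{L_2(x)}{L_1(x)}$, as claimed.

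\textbf{For \eqref{lem432ndest}.} The key move is to eliminate $x$ through the substitution $w := \Wn(x)$, which bijects $(e,+\infty)$ with $(1,+\infty)$. From the defining identity $x = w e^w$ one gets $L_1(x) = \ln(w e^w) = w + \ln w$ and $L_2(x) = \ln(w + \ln w)$, so
\begin{equation*}
\Wn(x) - \be_0(x) = w - (w + \ln w) + \ln(w + \ln w) = \ln\!\left(1 + \frac{\ln w}{w}\right),
\end{equation*}
which depends on $w$ alone. Because $\ln$ is strictly increasing, it suffices to maximize $\varphi(w) := (\ln w)/w$ on $(1,+\infty)$; a one-line computation gives $\varphi'(w) = (1-\ln w)/w^2$, so $\varphi$ attains its global maximum at $w = e$ with value $1/e$. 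Plugging back,
\begin{equation*}
\Wn(x) - \be_0(x) \;\le\; \ln\!\left(1 + \tfrac{1}{e}\right) = \kappa_1,
\end{equation*}
with equality at $w = e$, i.e.\ at $x = e^{e+1}$. The enclosure $\kappa_1 \in (0.31, 0.32)$ is a direct numerical evaluation.

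\textbf{Where the work is.} There is essentially no obstacle: the first bound is a rearrangement of a known estimate, while the substance of the second is the observation that $\Wn(x) - \be_0(x)$ collapses to a clean function of $w = \Wn(x)$ alone, reducing the problem to the elementary fact that $(\ln w)/w$ has its global maximum $1/e$ at $w = e$. If one did not wish to cite \eqref{olderbound}, one could equally well read the upper estimate of \eqref{lem431stest} off of the closed form $\ln(1 + (\ln w)/w)$ by the bound $\ln(1 + t) < t$ and a short comparison with $L_2/L_1$, but citing \eqref{olderbound} is the shortest route.
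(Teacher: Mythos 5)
Your first part, establishing \eqref{lem431stest}, is exactly the paper's argument: positivity follows from Lemma \ref{lemmaWsimple}, and the upper bound drops out of \eqref{olderbound} by subtracting $\be_0 = L_1 - L_2$. For \eqref{lem432ndest}, however, you take a genuinely different and cleaner route than the paper. The paper works directly in the variable $x$: it computes
\[
(\Wn-\be_0)'(x) = -\frac{\ln(x)-\Wn(x)-1}{x\ln(x)(\Wn(x)+1)}
\]
via the inverse-function derivative formula, shows the numerator $\ln(x)-\Wn(x)-1$ is increasing with a unique zero at $x=e^{e+1}$, and concludes that $\Wn-\be_0$ peaks there with value $\ln(1+1/e)$. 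You instead substitute $w=\Wn(x)$, which collapses $\Wn(x)-\be_0(x)$ to the closed form $\ln\!\left(1+\tfrac{\ln w}{w}\right)$; then the problem reduces to the textbook maximization of $\ln(w)/w$ on $(1,\infty)$, with maximum $1/e$ at $w=e$ (hence $x=e^{e+1}$). Both arguments locate the same maximizer and value, but yours avoids differentiating $\Wn-\be_0$ altogether and dispenses with the sign analysis; the elimination of $\Wn$ from the expression is the key observation and makes the bound visibly tight. One small caveat: the aside at the end, that one could recover the upper bound of \eqref{lem431stest} from $\ln(1+t)<t$ together with ``a short comparison with $L_2/L_1$'', is hand-waved -- the needed comparison $\tfrac{\ln w}{w}\le\tfrac{e}{e-1}\cdot\tfrac{\ln(w+\ln w)}{w+\ln w}$ is not immediate -- but since you cite \eqref{olderbound} as the actual route, this does not affect correctness.
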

Now we can state the main result of this section.
\begin{thm}\label{kappacorollary}
For $n\in\mathbb{N}^+$ and for any $x>e$, the recursion \eqref{x>estart} satisfies
\begin{equation}\label{cor44xest}
0<\Wn(x)-\be_n(x)<\frac{\left(\frac{e}{e-1}\frac{\ln(\ln (x))}{\ln(x)}\right)^{2^n}}{\left(\ln(x)-\ln(\ln(x))\right)^{-1+2^n}},
\end{equation}
and also the uniform estimate
\begin{equation}\label{cor44unifest}
0<\Wn(x)-\be_n(x)<\kappa_1^{2^n}<\left(\frac{32}{100}\right)^{2^n}.
\end{equation}
\end{thm}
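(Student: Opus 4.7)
The plan is a two-line induction on $n$, using Lemma \ref{sect41lem42} as the engine and Lemma \ref{sect41lem43} as the base case. Set $E_n := \Wn(x) - \be_n(x)$; the positivity $E_n > 0$ for all $n \in \Nn$ is exactly Lemma \ref{lem41}, so both parts of the theorem reduce to upper bounds on $E_n$.

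For \eqref{cor44xest}, I would write the target as $E_n < A^{2^n}/B^{2^n - 1}$ with $A := \frac{e}{e-1}\cdot\frac{L_2(x)}{L_1(x)}$ and $B := L_1(x) - L_2(x)$. The base case $n = 1$ follows by combining the quadratic inequality $E_1 < E_0^2/((1+\be_0(x))\Wn(x))$ from Lemma \ref{sect41lem42}, the starting estimate $E_0 < A$ from Lemma \ref{sect41lem43}, and the chain $(1+\be_0(x))\Wn(x) > \Wn(x) > L_1(x) - L_2(x) = B$ coming from $\be_0(x) > 0$ and Lemma \ref{lemmaWsimple}. The inductive step is the same bookkeeping: assuming $E_n < A^{2^n}/B^{2^n - 1}$, Lemma \ref{sect41lem42} yields $E_{n+1} < A^{2^{n+1}}/(B^{2^{n+1}-2}(1+\be_n(x))\Wn(x))$, and the lower bound $(1+\be_n(x))\Wn(x) > B$ (again from $\be_n(x) > 0$ and Lemma \ref{lemmaWsimple}) closes the induction with $E_{n+1} < A^{2^{n+1}}/B^{2^{n+1}-1}$.

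For the uniform estimate \eqref{cor44unifest}, I would switch to the coarser, $x$-independent lower bound $(1+\be_n(x))\Wn(x) > \Wn(x) > 1$, valid for $x > e$ because $\Wn(e) = 1$ and $\Wn$ is strictly increasing. Lemma \ref{sect41lem42} then collapses to $E_{n+1} < E_n^2$, and starting from $E_0 \le \kappa_1$ (the second part of Lemma \ref{sect41lem43}) a routine induction yields $E_n < \kappa_1^{2^n}$. The final inequality $\kappa_1 < 32/100$ reduces to checking $1 + 1/e < e^{32/100}$, which is elementary.

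I do not expect any serious obstacle here: the theorem is essentially assembled from the three preceding lemmas. The only decision to make is which lower bound to use for the denominator $(1+\be_n(x))\Wn(x)$ at each step --- the sharper $\Wn(x) > L_1(x) - L_2(x)$ for the pointwise estimate \eqref{cor44xest}, and the cruder $\Wn(x) > 1$ for the uniform one \eqref{cor44unifest}. All the real analytic content has already been packaged into Lemmas \ref{lem41}--\ref{sect41lem43}.
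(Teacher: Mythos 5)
Your proof is correct and follows essentially the same route as the paper's: both arguments drop the harmless factor $1+\be_n(x)>1$ in Lemma \ref{sect41lem42}, bound $\Wn(x)$ below by $L_1(x)-L_2(x)$ via Lemma \ref{lemmaWsimple} for the pointwise estimate and crudely by $1$ for the uniform one, and feed in Lemma \ref{sect41lem43} for the initial error. The only cosmetic difference is that the paper first iterates to the intermediate bound $\Wn(x)-\be_n(x)<\big(\Wn(x)-\be_0(x)\big)^{2^n}/\Wn(x)^{2^n-1}$ and then substitutes once, whereas you carry the bound $\Wn(x)>B$ through each inductive step; the two organizations are equivalent.
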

\begin{proof} To prove \eqref{cor44xest}, one drops the factor $1+\be_n(x)>1$ from the denominator of the upper estimate in \eqref{sect41lem42ineq}, then applies it recursively to get
\begin{equation}\label{cor44auxest}
0<\Wn(x)-\be_n(x)<\frac{\left( \Wn(x)-\be_0(x)\right)^{2^n}}{\left( \Wn(x) \right)^{-1+2^n}}.
\end{equation}
Then we use \eqref{lem431stest} in the numerator and Lemma \ref{lemmaWsimple} in the denominator.
To prove \eqref{cor44unifest}, due to $\Wn(x)>1$, we drop the denominator of the upper estimate in \eqref{cor44auxest} and use \eqref{lem432ndest}.
\end{proof}
The above theorem of course also proves \eqref{section41conv}. Regarding the estimate \eqref{cor44xest}, due to Lemma \ref{elementarylemma}, we have $\frac{e}{e-1}\frac{\ln(\ln (x))}{\ln(x)}\in (0,1)$ and $\ln(x)-\ln(\ln(x))>1$ for $x>e$. Moreover, similarly to the recursion in Section \ref{lambdarecursionsection}, \eqref{cor44xest} shows that the convergence of \eqref{x>estart} becomes faster for larger and larger values of $x$. The quality of approximations appearing in Theorem \ref{kappacorollary} can be observed in Figure \ref{fig1region}.

\begin{figure}
\begin{subfigure}{.5\textwidth}
  \centering
  \includegraphics[width=.9\linewidth]{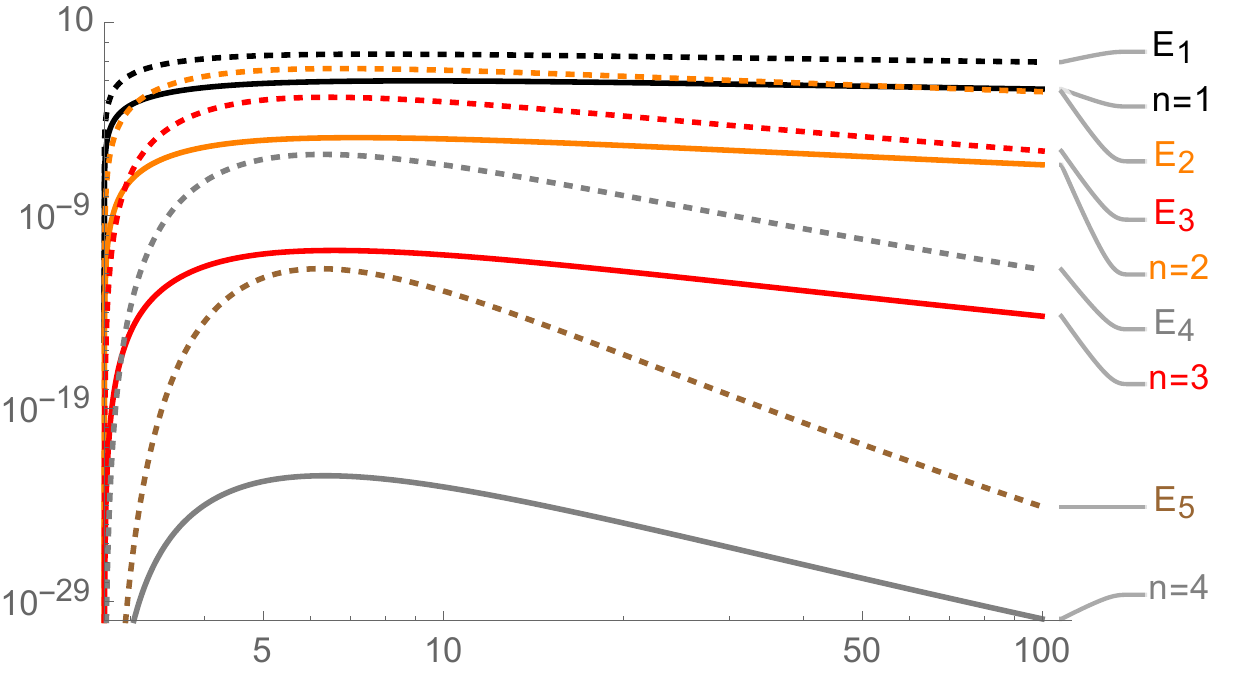}
  \caption{}
\end{subfigure}%
\begin{subfigure}{.5\textwidth}
  \centering
  \includegraphics[width=.9\linewidth]{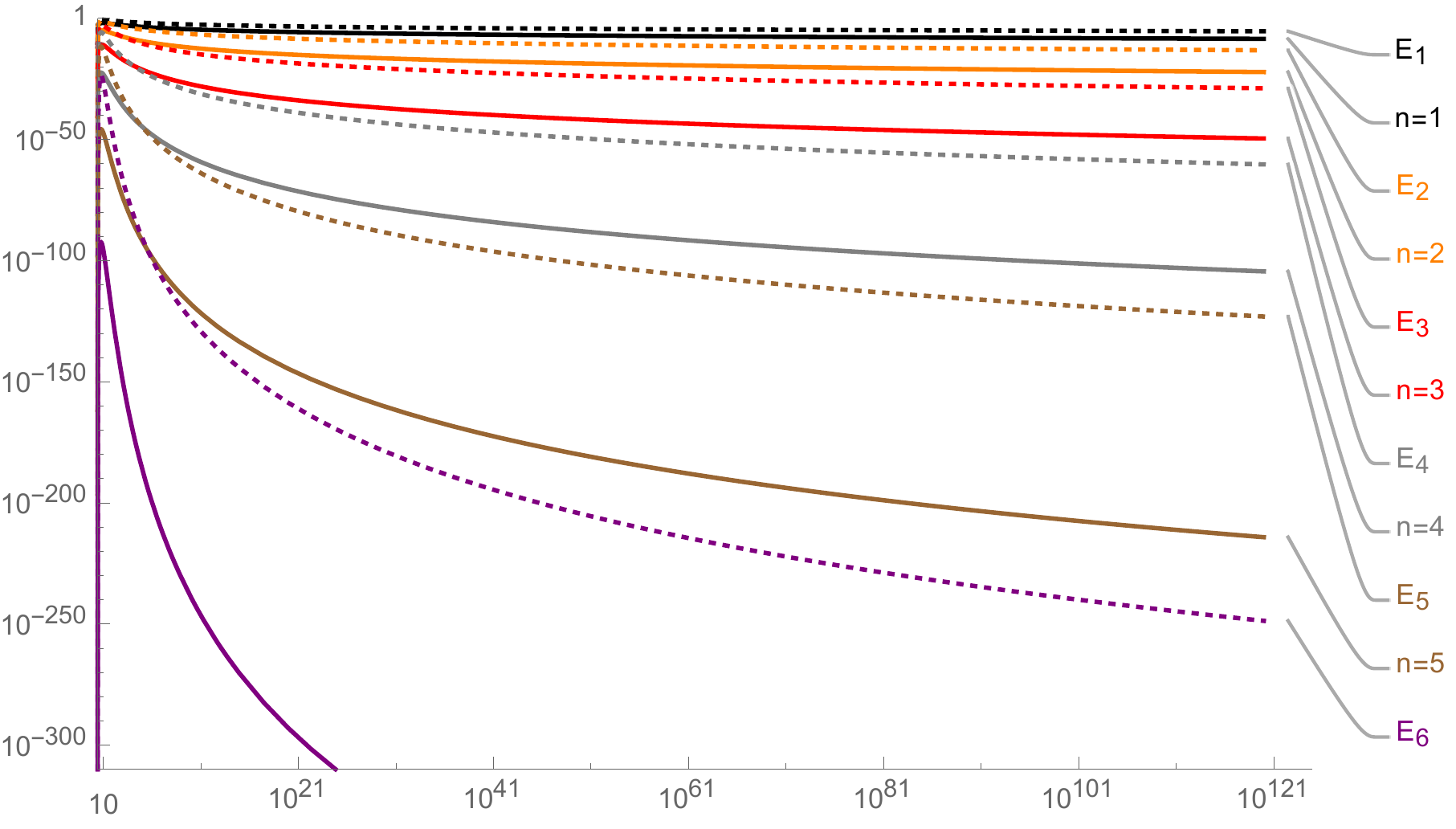}
  \caption{}
\end{subfigure}
\caption{The log-log plot in figure (a) illustrates the quantities in \eqref{cor44xest} (see Theorem \ref{kappacorollary}). The continuous curves correspond to the actual differences $\Wn(x)-\be_n(x)$ for various values of $n$, whereas the dotted curves $E_n$ depict the right-hand side of the estimate \eqref{cor44xest}.
Figure (b) shows the same expressions but for an extended range of $x$ values.}
\label{fig1region}
\end{figure}

\begin{rem} 
According to \eqref{cor44unifest}, we have the following uniform estimates for any $x>e$:
\[
0<\Wn(x)-\be_5(x)<8\cdot 10^{-17},
\]
\[
0<\Wn(x)-\be_{10}(x)<7\cdot 10^{-517},\]\[
0<\Wn(x)-\be_{15}(x)<8\cdot 10^{-16519}.
\]
\end{rem}

\begin{rem}
In \textit{Mathematica} (version 11), a direct evaluation of $\Wn\left(10^{10^3}\right)$ with its command \emph{\texttt{ProductLog}} is not possible: although the number $10^{1000}$ itself can easily be represented in this computer system, its internal algorithms cannot handle $\Wn\left(10^{1000}\right)$. (Based on the error messages, the reason is probably the following: \textit{Mathematica} uses (a variant) of the recursion \eqref{Newtonrecursion}, which 
contains the expression $x e^{-\nu_n(x)}$, and here $x>0$ is large, while $e^{-\nu_n(x)}$ is too close to $0$. Indeed, it seems that this particular piece of code tries to represent $e^{-\nu_n(x)}$ as a 
``machine number'', even if high-precision computation is requested.)

Now with the recursion \eqref{x>estart}, it is straightforward to estimate even $\Wn\left(10^{10^{20}}\right)$ in \textit{Mathematica} by taking advantage of the logarithms appearing in the starting value $\beta_0$ and rewriting $\ln\left(10^{10^{20}}\right)$ as $10^{20}\cdot \ln(10)$. In particular, due to Theorem \ref{kappacorollary} we have
\[
0<\Wn\left(10^{10^{20}}\right)-\be_{9}\left(10^{10^{20}}\right)<10^{-10000}.
\]
In fact, the difference above is even smaller than $2\cdot 10^{-19873}$, and the computation of  
$\be_{9}\left(10^{10^{20}}\right)$ to the desired precision took less than 0.33 seconds in \textit{Mathematica} on a standard laptop. These huge $\Wn$ values may have significance in number theory, because there are some estimates of the non-trivial roots of the Riemann $\zeta$ function expressed in terms of the $\Wn$ function \cite[Section 8]{boyd}.

The approximation of the quantity $\Wn\left(10^{10^{20}}\right)$ to 10000 digits of precision appears in  \cite[Section 6]{johansson}; it is implemented in the Arb library. We found that all the displayed digits of this number are in perfect agreement with the corresponding digits of our quantity $\be_{9}\left(10^{10^{20}}\right)$ computed in \textit{Mathematica}.
\end{rem}

\subsection{Convergence to $\Wn$ on the interval $(0,e)$}\label{subsection42}

Let us fix an arbitrary $0<x<e$ in this section. On this interval, we propose the following simple starting value:
\begin{equation}\label{0<x<estart}
\left\{
\begin{aligned}
  \be_0(x):=  & x/e, \\ 
  \be_{n+1}(x):=  &  \frac{\be_n(x)}{1+\be_n(x)}\left(1+\ln\left(\frac{x}{\be_n(x)}\right)\right)\quad (n\in\mathbb{N}). 
\end{aligned}
\right.
\end{equation}
\smallskip
\noindent By using the formula for the derivative of the inverse function, we have
\[
\Wn''(x)=-\frac{(\Wn(x))^2 (W(x)+2)}{x^2 (W(x)+1)^3}<0,
\]
so $\Wn$ is strictly concave on $(0,e)$, and $\Wn(x)=x/e$ holds at $x=0$ and $x=e$, hence $0<\be_0(x)<\Wn(x)$ on this interval. But this means that Lemmas \ref{lem41}--\ref{sect41lem42} and their proofs remain valid also for $x\in(0,e)$. Therefore, we can repeat the first few steps of the proof of Theorem \ref{kappacorollary} to arrive at the inequality
\begin{equation}\label{sect422nineq}
0<\Wn(x)-\be_n(x)<\frac{\left( \Wn(x)-\be_0(x)\right)^{2^n}}{\left( \Wn(x) \right)^{-1+2^n}}
\end{equation}
again ($n\in\mathbb{N}^+$). However, unlike on the interval $(e,+\infty)$ in the previous section, now the denominator of \eqref{sect422nineq} can get arbitrarily close to $0$ on $(0,e)$, so some care must be taken. First, we state the following lemma, whose proof is given in Section \ref{sect42lem47proof}.
\begin{lem}\label{sect42lem47}
For any $x\in(0,e)$ we have
\[
0<\Wn(x)-\be_0(x)<\frac{1}{5}.
\]
\end{lem}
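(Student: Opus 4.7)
The lower bound $\Wn(x)-\be_0(x)>0$ will follow at once from the preamble of this section: $\Wn$ is strictly concave on $(0,e)$ and agrees with the chord $y=x/e=\be_0(x)$ at the endpoints $x=0$ and $x=e$, so it strictly exceeds that chord throughout the interior. The substance of the lemma is therefore the upper bound $\Wn(x)-x/e<1/5$. To prove it, I would substitute $t:=\Wn(x)\in(0,1)$, so that $x=te^t$; the claim then becomes
\[
F(t):=t\bigl(1-e^{t-1}\bigr)<\frac{1}{5}\qquad (t\in(0,1)).
\]

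I would then analyze $F$. Since $F(0)=F(1)=0$, $F'(t)=1-(1+t)e^{t-1}$, and the map $t\mapsto(1+t)e^{t-1}$ is strictly increasing on $[0,1]$ (its derivative equals $(2+t)e^{t-1}>0$) with endpoint values $e^{-1}<1<2$, the derivative $F'$ has a unique zero $t^*\in(0,1)$, at which $F$ attains its maximum. Using $(1+t^*)e^{t^*-1}=1$, this maximum evaluates to
\[
F(t^*)=t^*-\frac{t^*}{1+t^*}=\frac{(t^*)^2}{1+t^*}.
\]
Let $t_0:=(1+\sqrt{21})/10$ denote the positive root of $5t^2-t-1=0$, so that by construction $t_0^2/(1+t_0)=1/5$. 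Since the map $t\mapsto t^2/(1+t)$ is strictly increasing on $(0,\infty)$, the desired bound $F(t^*)<1/5$ is equivalent to $t^*<t_0$, and by the strict monotonicity of $(1+t)e^{t-1}$ this in turn is equivalent to the single transcendental inequality
\[
(1+t_0)\,e^{t_0-1}>1.\qquad (\star)
\]

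Verifying $(\star)$ will be the main obstacle, since it is numerically tight: the true maximum of $F$ is approximately $0.197$, leaving a margin of only about $0.003$ to $1/5$. My plan is to combine a crude bound on $\sqrt{21}$ with a sufficiently high-order Taylor estimate for the exponential. Specifically, $4.58^2=20.9764<21$ gives $\sqrt{21}>4.58$, hence $t_0>0.558$, $1+t_0>1.558$, and $t_0-1>-0.442$; and the Taylor lower bound $e^{-x}\ge\sum_{k=0}^{5}(-x)^k/k!$ for $x\ge 0$ (valid by Taylor's theorem with Lagrange remainder, since the next term $x^6/720$ is non-negative) evaluated at $x=0.442$ exceeds $0.6427$. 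Combining these yields $(1+t_0)e^{t_0-1}>1.558\cdot 0.6427>1$, which establishes $(\star)$ and hence the lemma. A third-order Taylor bound for $e^{-x}$ would fall just short of the target, so working at order at least five is essentially forced by the tightness of the inequality.
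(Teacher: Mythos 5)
Your proof is correct, but it takes a genuinely different route from the paper's. The paper stays in the original variable $x$: it applies the monotonicity equivalence \eqref{monotoneequivalence} to reduce the claim to $f(x):=\left(\frac{1}{5}+\frac{x}{e}\right)e^{1/5+x/e}-x>0$ on $(0,e)$, observes that $f$ is strictly convex with positive values at both endpoints, locates its interior minimum symbolically at $x^*=e\left(\Wn(e^2)-\frac{6}{5}\right)$, and finally verifies $f(x^*)=-\frac{e}{\Wn(e^2)}-e\Wn(e^2)+\frac{11e}{5}>0$ by a numerical evaluation of $\Wn(e^2)$ (using the recursion of Section \ref{IBconvx>e}). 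Your approach instead reparametrizes by $t=\Wn(x)\in(0,1)$, which eliminates $\Wn$ entirely and turns the claim into an elementary inequality $t\left(1-e^{t-1}\right)<\frac{1}{5}$; the critical point condition $(1+t^*)e^{t^*-1}=1$ then lets you express the maximum value purely rationally as $(t^*)^2/(1+t^*)$, compare against the explicit algebraic threshold $t_0=(1+\sqrt{21})/10$, and reduce the whole lemma to the single inequality $(1+t_0)e^{t_0-1}>1$, checked by a degree-5 Taylor lower bound for $e^{-x}$. What your route buys: it is self-contained (no appeal to the recursion machinery of Section \ref{IBconvx>e} or to any numerical value of $\Wn$), and the final verification is a bound on the exponential function alone. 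What the paper's route buys: the symbolic critical point falls out in closed form with $\Wn(e^2)$, and the positivity check is a single numerical evaluation rather than a careful Taylor estimate. Incidentally, your $t=\Wn(x)$ substitution is the same device the paper uses in several of its other proofs (e.g.\ the parametrization $x=ye^y$ in Sections \ref{sect43w0<b0lemmaproof} and \ref{sect44lem421lemmaproof}), so your argument is very much in keeping with the paper's overall technique even though it differs from the proof given for this particular lemma.
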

\begin{rem} 
 There is no simple formula for the global maximum of the function $\Wn-\be_0$ on $(0,e)$ (with $\be_0$ defined in \eqref{0<x<estart}). Nevertheless, the value $1/5$ given above is close to the actual global maximum (which is approximately $0.1993$)---cf.~Lemma \ref{sect41lem43}, with $\be_0$ defined in \eqref{x>estart}, where the global maximum on $(e,+\infty)$ is exactly $\kappa_1$.
\end{rem}
The following uniform upper estimate is the main result of this section, also proving $\lim_{n\to+\infty}\be_n(x)=\Wn(x)$ for $0<x<e$.

\begin{thm}\label{thm47sect42}
With $\kappa_2:=1-1/e$ and for any $n\in\mathbb{N}^+$ and $0<x<e$, the recursion \eqref{0<x<estart} satisfies 
\begin{equation}\label{thm47unifest}
0<\Wn(x)-\be_n(x)<\frac{1}{5}\cdot\kappa_2^{-1+2^n}<\frac{1}{5}\cdot\left(\frac{633}{1000}\right)^{-1+2^n}.
\end{equation}
\end{thm}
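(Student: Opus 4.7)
The plan is to iterate the per-step quadratic bound \eqref{sect41lem42ineq}---which, as noted in the subsection, applies on $(0,e)$ because $\be_0(x)=x/e<\Wn(x)$ there by concavity---but to track \emph{two} quantities in parallel: the absolute error $\Wn(x)-\be_n(x)$ and the relative error $r_n(x):=(\Wn(x)-\be_n(x))/\Wn(x)$. The reason a direct appeal to the consolidated estimate \eqref{sect422nineq} does not close the argument is that $\Wn(x)$ in the denominator tends to $0$ as $x\to 0^+$, so the uniform numerator bound of Lemma \ref{sect42lem47} alone cannot be converted into a uniform statement on the whole of $(0,e)$.

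First, I would establish two initial bounds. Lemma \ref{sect42lem47} supplies $\Wn(x)-\be_0(x)<1/5$ directly. For the ratio, the defining identity $\Wn(x)=xe^{-\Wn(x)}$ yields
\[
r_0(x)=1-\frac{x/e}{\Wn(x)}=1-e^{\Wn(x)-1},
\]
and since $0<\Wn(x)<1$ on $(0,e)$, this is strictly less than $1-1/e=\kappa_2$.

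Second, I would prove the coupled claim $\Wn(x)-\be_n(x)<\kappa_2^{2^n-1}/5$ and $r_n(x)<\kappa_2^{2^n}$ by induction on $n\ge 1$. For the base case $n=1$, one rewrites the right-hand side of \eqref{sect41lem42ineq} as $(\Wn(x)-\be_0(x))\cdot r_0(x)/(1+\be_0(x))$, drops the factor $1+\be_0(x)>1$ (legitimate because Lemma \ref{lem41} applies here and gives $\be_0>0$), and substitutes the two first-step bounds to get $\Wn-\be_1<\kappa_2/5$; dividing the same inequality by $\Wn(x)$ gives $r_1<r_0^2<\kappa_2^2$. The inductive step is structurally identical: \eqref{sect41lem42ineq} yields $\Wn-\be_{n+1}<(\Wn-\be_n)\cdot r_n$ and $r_{n+1}<r_n^2$, producing the exponents $2^{n+1}-1$ and $2^{n+1}$ respectively.

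Finally, the rightmost inequality in \eqref{thm47unifest} is the elementary numerical fact $\kappa_2=1-1/e<633/1000$. The only subtle point---and the main obstacle to a naive proof---is precisely the coupling of the two bounds: substituting only the absolute estimate $\Wn-\be_0<1/5$ into \eqref{sect422nineq} blows up as $x\to 0^+$, while using only the relative estimate $r_0<\kappa_2$ loses the $1/5$ prefactor demanded by \eqref{thm47unifest}. Propagating both estimates simultaneously through the recursion is what delivers the claimed exponent $2^n-1$ together with the uniform constant $1/5$.
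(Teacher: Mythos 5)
Your argument is correct, and its overall architecture coincides with the paper's: both decompose the consolidated bound \eqref{sect422nineq} into the product of the absolute initial error $\Wn(x)-\be_0(x)$ and the $(2^n-1)$-th power of the relative initial error $1-\be_0(x)/\Wn(x)$, then bound the first factor by Lemma \ref{sect42lem47} and the second by $\kappa_2$. Your ``propagate two coupled quantities'' induction is algebraically the same thing, since telescoping $\Wn-\be_n<(\Wn-\be_0)\,r_0 r_1\cdots r_{n-1}$ with $r_k<r_0^{2^k}$ reproduces $(\Wn-\be_0)\,r_0^{2^n-1}$ exactly.

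The one place where you genuinely diverge from the paper---and improve on it---is the proof that $r_0(x)<\kappa_2$ uniformly on $(0,e)$. The paper computes the derivative $\bigl(1-\be_0/\Wn\bigr)'(x)=-\tfrac{1}{e(\Wn(x)+1)}<0$, concludes monotonicity, and then evaluates the limit $\lim_{x\to 0^+}\bigl(1-\tfrac{x/e}{\Wn(x)}\bigr)=1-1/e$ by invoking the Taylor series \eqref{W00series} for $\lim_{x\to 0}\Wn(x)/x=1$. You instead substitute the defining relation $\Wn(x)=x e^{-\Wn(x)}$ to obtain the closed form $\be_0(x)/\Wn(x)=e^{\Wn(x)-1}$, whence $r_0(x)=1-e^{\Wn(x)-1}\in(0,1-1/e)$ follows immediately from $0<\Wn(x)<1$, no differentiation or limit required. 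This is a cleaner and more elementary derivation of the key constant $\kappa_2$, and it also makes transparent \emph{why} the bound is tight only in the limit $x\to 0^+$. Everything else in your proposal matches the paper step for step, including the correct invocation of Lemma \ref{lem41} to drop the factor $1+\be_n(x)>1$ and the observation that the naive use of \eqref{sect422nineq} fails because the denominator degenerates near $0$.
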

\begin{proof} We give a simple upper estimate of the rightmost fraction in \eqref{sect422nineq}. Let us set $m:=2^n-1\in\mathbb{N}^+$ and consider the decomposition
\[
\frac{\left( \Wn(x)-\be_0(x)\right)^{2^n}}{\left( \Wn(x) \right)^{-1+2^n}}=
\left( \Wn(x)-\be_0(x)\right)\cdot\left(1-\frac{\be_0(x)}{\Wn(x)}\right)^{m}.
\]
The first factor is upper estimated by using Lemma \ref{sect42lem47}. As for the second one, notice that
\[
\left(1-\frac{\be_0}{\Wn}\right)'(x)=-\frac{1}{e (\Wn(x)+1)}<0,
\]
hence, for $0<x<e$,
\[
0<1-\frac{\be_0(x)}{\Wn(x)}<\lim_{x\to 0^+}\left(1-\frac{\be_0(x)}{\Wn(x)}\right)=\lim_{x\to 0^+}\left(1-\frac{x/e}{\Wn(x)}\right).
\]
Now \eqref{W00series}---the Taylor expansion of $\Wn$ about the origin, with positive radius of convergence---implies that $\lim_{x\to 0}\frac{\Wn(x)}{x}=1$, so the above limit is $\kappa_2$, completing the proof.
\end{proof}

Theorem \ref{thm47sect42} is illustrated by Figure \ref{fig2region}.

\begin{figure}
\begin{center}
\includegraphics[width=0.66\textwidth]{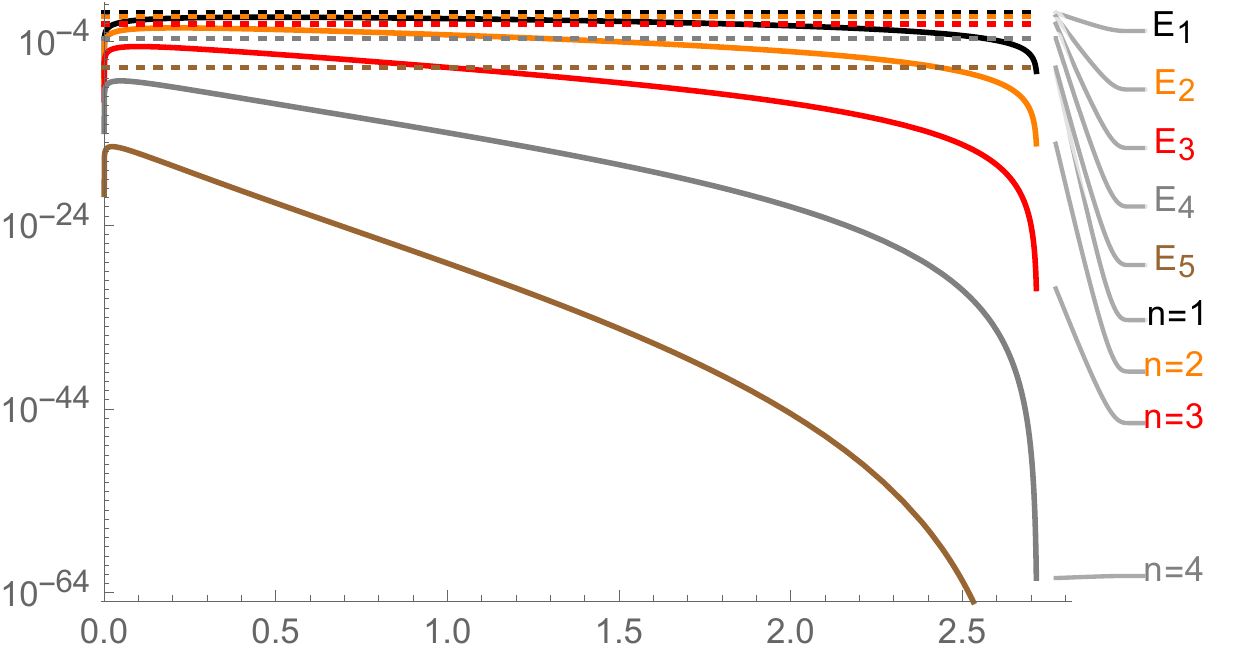}
\caption{A semi-log plot illustrating Theorem \ref{thm47sect42}. The continuous curves correspond to the actual differences $\Wn(x)-\be_n(x)$ for various values of $n$ (and they tend to $0$ as $x$ converges to any of the endpoints of the interval $(0, e)$), whereas the dotted lines $E_n$ depict the uniform estimates $\frac{1}{5}\cdot\kappa_2^{-1+2^n}$ in \eqref{thm47unifest}.\label{fig2region}}
\end{center}
\end{figure}

\subsection{Convergence to $\Wn$ on the interval $(-1/e,0)$} \label{subsection43}

Let us fix any $x\in(-1/e,0)$ in this section. On this interval, we make the following choice for the starting value:
\begin{equation}\label{-1/e<x<0start}
\left\{
\begin{aligned}
  \be_0(x):=  & \frac{e x\ln(1+\sqrt{1+e x})}{\sqrt{1+e x}\,(1+\sqrt{1+e x})}, \\ 
  \be_{n+1}(x):=  &  \frac{\be_n(x)}{1+\be_n(x)}\left(1+\ln\left(\frac{x}{\be_n(x)}\right)\right)\quad (n\in\mathbb{N}). 
\end{aligned}
\right.
\end{equation}

The following lemma gives a two-sided initial estimate of $\Wn$; its proof is given in Section \ref{sect43w0<b0lemmaproof}.

\begin{lem}\label{sect43w0<b0}
For any $-1/e<x<0$ we have
\[
-1<-1+\sqrt{1+e x}<\Wn(x)<\be_0(x)<0.
\]
\end{lem}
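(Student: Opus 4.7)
The statement decomposes into four inequalities, three of which I expect to be essentially immediate. The outer bound $-1<-1+\sqrt{1+ex}$ is trivial on $(-1/e,0)$, since $1+ex>0$ gives $\sqrt{1+ex}>0$. The bound $\be_0(x)<0$ is a direct sign check on the closed-form expression: on $(-1/e,0)$ the numerator $ex\ln(1+\sqrt{1+ex})$ is the product of a negative factor ($ex<0$) and a positive factor ($\ln(1+\sqrt{1+ex})>0$), while the denominator $\sqrt{1+ex}\,(1+\sqrt{1+ex})$ is positive. For the lower sandwich bound $-1+\sqrt{1+ex}<\Wn(x)$, the plan is to substitute $u:=\sqrt{1+ex}\in(0,1)$, so that $x=(u^2-1)/e$, and invoke \eqref{monotoneequivalence}: since both $-1+u$ and $\Wn(x)$ lie in $[-1,+\infty)$, the inequality is equivalent to $(u-1)e^{u-1}<x=(u-1)(u+1)/e$, and dividing through by $u-1<0$ reduces it to the classical $e^u>u+1$ for $u>0$.

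The harder inequality is the middle one, $\Wn(x)<\be_0(x)$. The key conceptual observation is that $\be_0(x)$ coincides with the Newton-type iterate produced by one step of \eqref{IBrecursion} applied to the auxiliary value $z:=-1+\sqrt{1+ex}$. Indeed, with $u=\sqrt{1+ex}$ one computes $x/z=(u+1)/e$, hence $\ln(x/z)=\ln(1+u)-1$ and $1+z=u$, so one step of the recursion yields $z(1+\ln(x/z))/(1+z)=(u-1)\ln(1+u)/u$; matching this against the closed form for $\be_0(x)$, after using $ex=(u-1)(u+1)$ and cancelling $1+u$, identifies the two.

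With this identification in hand, $\Wn(x)<\be_0(x)$ follows from a standard Newton-overshoot argument. Define $G(y):=y-\ln(x/y)$ on $(-1,0)$; by \eqref{functional2} its unique zero there is $\Wn(x)$, and $G'(y)=1+1/y<0$ together with $G''(y)=-1/y^2<0$ show that $G$ is strictly decreasing and strictly concave on $(-1,0)$. The iteration \eqref{IBrecursion} is precisely Newton's method for $G$, so $\be_0(x)$ is by construction the zero of the tangent line $L(y):=G(z)+G'(z)(y-z)$ to $G$ at $z$. By strict concavity, $L$ lies strictly above $G$ away from $z$; using $z<\Wn(x)$ from the previous paragraph this gives $L(\Wn(x))>G(\Wn(x))=0=L(\be_0(x))$, and since $L$ is strictly decreasing (slope $G'(z)<0$), this forces $\Wn(x)<\be_0(x)$, as required.

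The main obstacle I foresee is purely bookkeeping: recognising and algebraically verifying the identification of $\be_0(x)$ with the Newton step from $-1+\sqrt{1+ex}$ takes some care with the substitution $u=\sqrt{1+ex}$ and the factorisation $ex=(u-1)(u+1)$. Once this symbolic identification is in place, the concavity and monotonicity properties of $G$ and the final comparison $L(\Wn(x))>L(\be_0(x))$ are entirely routine.
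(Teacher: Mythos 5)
Your proposal is correct. The first three inequalities are handled exactly as in the paper: the outer two are sign checks, and $-1+\sqrt{1+ex}<\Wn(x)$ is reduced via \eqref{monotoneequivalence} and the substitution $u=\sqrt{1+ex}$ to $e^u>u+1$. Where you diverge is the middle inequality $\Wn(x)<\be_0(x)$. The paper substitutes $x=ye^y$ with $y=\Wn(x)\in(-1,0)$, clears denominators to reach a transcendental inequality vanishing at $y=-1$, and establishes it by showing the derivative is positive via a surprising factorization of a messy expression. You instead recognize that $\be_0(x)$ is exactly one step of \eqref{IBrecursion} applied to $z=-1+\sqrt{1+ex}$ (a fact the paper records but only as motivational Remark \ref{section43rem411}, not as part of this proof), and then observe that \eqref{IBrecursion} is Newton's method for the strictly decreasing, strictly concave function $G(y)=y-\ln(x/y)$ whose unique zero in $(-1,0)$ is $\Wn(x)$; the tangent-line overshoot inherent in Newton's method on a concave function then gives $\Wn(x)<\be_0(x)$ immediately. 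Your route buys conceptual clarity and avoids the ad hoc factorization; it also makes visible why the iterates stay above $\Wn$ at every step, which the paper establishes separately in Lemma \ref{lem412}. The paper's route is more computational but remains self-contained without appealing to the Newton interpretation. Both are sound; your algebraic identification of $\be_0$ with the Newton iterate (via $1+z=u$, $\ln(x/z)=\ln(1+u)-1$, $ex=(u-1)(u+1)$) checks out.
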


\begin{rem}\label{section43rem411}
The choice of the lower bound $-1+\sqrt{1+e x}$ in Lemma \ref{sect43w0<b0} is motivated by the Puiseux expansion of $\W$ about the branch point $-1/e$, while $\be_0(x)$ is the result of a single iteration step of \eqref{IBrecursion} applied to $-1+\sqrt{1+e x}$. 
\end{rem}

The lemma below establishes the monotonicity and boundedness properties of the sequence \eqref{-1/e<x<0start}, and shows that it is well-defined and real-valued. Its proof---found in Secion \ref{lem412lemmaproof}---is analogous to that of Lemma \ref{lem41}.
\begin{lem}\label{lem412}
For any $x\in(-1/e,0)$, the recursion \eqref{-1/e<x<0start}  satisfies 
\[
-1<\Wn(x)<\be_{n+1}(x)<\be_n(x)<0\quad (n\in\Nn).
\]
\end{lem}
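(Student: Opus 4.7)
The plan is to prove Lemma \ref{lem412} by induction on $n$, in the spirit of Lemma \ref{lem41} but keeping close track of signs, since now $W_0(x),\beta_n(x)\in(-1,0)$ and the monotonicity is reversed (the iterates approach $W_0$ from above instead of below). The base case $n=0$ is supplied directly by Lemma \ref{sect43w0<b0}, which gives $-1<W_0(x)<\beta_0(x)<0$.

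For the inductive step, I would assume $-1<W_0(x)<\beta_n(x)<0$ and first verify that $\beta_{n+1}(x)$ is well-defined and real: since $x<0$ and $\beta_n<0$ the ratio $x/\beta_n$ is positive, so $\ln(x/\beta_n)$ is real, while $1+\beta_n>1+W_0(x)>0$. For the bound $\beta_{n+1}<\beta_n$, the plan is to rewrite
\[
\beta_{n+1}(x)-\beta_n(x)=\frac{\beta_n(x)\bigl(\ln(x/\beta_n(x))-\beta_n(x)\bigr)}{1+\beta_n(x)}=\frac{\beta_n(x)\,h(\beta_n(x))}{1+\beta_n(x)},
\]
where $h(\beta):=\ln(x/\beta)-\beta$ on $(-1,0)$. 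Since $h(W_0(x))=0$ by the functional equation \eqref{functional2} and $h'(\beta)=-(1+\beta)/\beta>0$ on $(-1,0)$, the inductive hypothesis $\beta_n>W_0$ forces $h(\beta_n)>0$; combining with $\beta_n<0$ and $1+\beta_n>0$ gives $\beta_{n+1}<\beta_n$.

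For the bound $W_0(x)<\beta_{n+1}(x)$, I would substitute $u:=W_0(x)/\beta_n(x)$, which satisfies $u>1$ because $-1<W_0<\beta_n<0$ implies $|W_0|>|\beta_n|$. Using \eqref{functional2} in the form $W_0=\ln(x/W_0)$, I would rewrite $\ln(x/\beta_n)=W_0+\ln u$ and expand
\[
W_0(x)-\beta_{n+1}(x)=\frac{(1+\beta_n)W_0-\beta_n(1+W_0+\ln u)}{1+\beta_n}=\frac{\beta_n(x)\,(u-1-\ln u)}{1+\beta_n(x)}.
\]
The elementary inequality $u-1-\ln u>0$ for $u>1$, together with $\beta_n<0$ and $1+\beta_n>0$, then delivers $W_0(x)-\beta_{n+1}(x)<0$, i.e.\ $W_0(x)<\beta_{n+1}(x)$. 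The remaining bounds $\beta_{n+1}(x)<0$ and $\beta_{n+1}(x)>-1$ follow immediately from the two inequalities just established, closing the induction.

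The only real obstacle is bookkeeping of signs: every factor (the $\beta_n$ out front, the denominators $1+\beta_n$, the auxiliary quantities $h(\beta_n)$ and $u-1-\ln u$) must be signed correctly, and the direction of the strict inequality $\beta_n>W_0$ must be propagated through the identity $u=W_0/\beta_n>1$. Once the algebraic identity reducing $\beta_{n+1}-\beta_n$ and $W_0-\beta_{n+1}$ to products of explicitly signed quantities is found, the argument is purely mechanical, and essentially mirrors the proof of Lemma \ref{lem41} with the roles of upper/lower bounds exchanged.
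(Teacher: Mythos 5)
Your proposal is correct and follows essentially the same inductive argument as the paper: base case from Lemma~\ref{sect43w0<b0}, then reduction of $\be_{n+1}-\be_n$ and $\Wn-\be_{n+1}$ to products of explicitly signed factors via the functional equation \eqref{functional2}. The only cosmetic difference is the change of variable — you use $u=\Wn/\be_n>1$ and the inequality $u-1-\ln u>0$, while the paper uses the reciprocal $y=\be_n/\Wn\in(0,1)$ and Lemma~\ref{0lemma}; these are the same estimate under $u=1/y$.
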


The error estimate in Theorem \ref{sect43thm} will be based on the following inequality (cf.~Lemma \ref{sect41lem42}), whose proof is found in Section \ref{lemma413proof}. 
\begin{lem}\label{sect43lem413}
For any $-1/e<x<0$ and $n\in\mathbb{N}$, we have
\begin{equation}\label{sect43lem413ineq}
0<\be_{n+1}(x)-\Wn(x)<\frac{(\be_n(x)-\Wn(x))^2}{-\Wn(x)(1+\be_n(x))}.
\end{equation}
\end{lem}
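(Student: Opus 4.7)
The plan is to mimic the bootstrap underlying Lemma \ref{sect41lem42}, but with careful sign bookkeeping since we are now on the interval $(-1/e,0)$ where $\Wn$ is negative. Abbreviate $W:=\Wn(x)$ and $\beta:=\be_n(x)$. The functional equation \eqref{functional2} gives $-W=\ln(W/x)$, equivalently $\ln(x/W)=W$ (note that $x/W>0$ because both are negative). I would use this to split
\[
\ln\!\left(\frac{x}{\beta}\right)=\ln\!\left(\frac{x}{W}\right)+\ln\!\left(\frac{W}{\beta}\right)=W+\ln\!\left(\frac{W}{\beta}\right),
\]
which is legitimate since $W/\beta>0$ by Lemma \ref{lem412}. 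Substituting into the recursion and simplifying yields the identity
\[
\be_{n+1}(x)-W=\frac{(\beta-W)+\beta\ln(W/\beta)}{1+\beta},
\]
and Lemma \ref{lem412} tells us $-1<W<\beta<0$, so $1+\beta>0$ and this denominator is harmless.

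Next, I would introduce $t:=\beta/W$. Lemma \ref{lem412} gives $t\in(0,1)$, and with $\beta=tW$ one computes $\beta-W=(t-1)W$, $\ln(W/\beta)=-\ln t$, and $\beta\ln(W/\beta)=-tW\ln t$. Substituting into the identity above produces
\[
\be_{n+1}(x)-W=\frac{W\bigl[(t-1)-t\ln t\bigr]}{1+\beta}.
\]
The lower bound in \eqref{sect43lem413ineq} is then equivalent to $(t-1)-t\ln t<0$ (since $W<0$), while plugging the identity into the desired upper bound and multiplying through by the positive quantity $-W(1+\beta)/W=(1+\beta)\cdot\frac{-W}{W}\cdot\ldots$ (more cleanly: clear denominators and use $W<0$) reduces \eqref{sect43lem413ineq} to
\[
(t-1)+(t-1)^2-t\ln t>0\qquad\text{for }t\in(0,1).
\]

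These two one-variable inequalities I would settle by elementary calculus. For the lower bound, set $g(t):=t\ln t-t+1$; then $g(1)=0$ and $g'(t)=\ln t<0$ on $(0,1)$, so $g>0$ there, giving $t\ln t>t-1$ as required. For the upper bound, set $h(t):=(t-1)+(t-1)^2-t\ln t$; then $h(1)=0$, $\lim_{t\to 0^+}h(t)=0$ (using $t\ln t\to 0$), $h'(t)=2(t-1)-\ln t$ with $h'(1)=0$, and $h''(t)=2-1/t$ changes sign exactly at $t=1/2$. This shape analysis shows $h'$ has a unique zero $t_0\in(0,1/2)$, so $h$ strictly increases on $(0,t_0)$ from $0$ and strictly decreases on $(t_0,1)$ back to $0$; hence $h>0$ on $(0,1)$.

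The only real obstacle I anticipate is the sign-tracking when clearing denominators in the upper bound: the factor $-W$ is positive but $W$ itself is negative, and one must confirm that the reduction to the clean inequality $h(t)>0$ is genuinely equivalent rather than merely implied. Once this is done carefully, the remainder of the argument is routine monotonicity.
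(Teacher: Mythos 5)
Your proof is correct and its skeleton is the same as the paper's: both start from the identity $\be_{n+1}-\Wn=\frac{1}{1+\be_n}\bigl(\be_n-\Wn+\be_n\ln(\be_n/\Wn)\bigr)$, reparametrize by the ratio $\be_n/\Wn\in(0,1)$, and reduce to a one-variable inequality. Where you diverge is at the very end: after clearing denominators you arrive at $h(t):=(t-1)+(t-1)^2-t\ln t>0$ for $t\in(0,1)$ and then settle it by a full second-derivative shape analysis (inflection at $t=1/2$, unique zero of $h'$, etc.). The paper instead keeps the identity in the form $\be_{n+1}-\Wn=\frac{-\be_n}{1+\be_n}\ln(1+z)+\frac{\be_n-\Wn}{1+\be_n}$ with $z=(\be_n-\Wn)/\Wn\in(-1,0)$, observes the coefficient $\frac{-\be_n}{1+\be_n}$ is positive, and simply replaces $\ln(1+z)$ by the larger quantity $z$ — a one-line finish. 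Your $h$ is in fact this same inequality in disguise: with $z=t-1$ one has $h(t)=(1+z)\bigl(z-\ln(1+z)\bigr)$, so $h>0$ on $(0,1)$ is equivalent to $\ln(1+z)<z$ on $(-1,0)$. Noting that factorization would have spared you the calculus; otherwise the argument and its sign bookkeeping are sound.
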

Regarding the above upper estimate, note that this time the denominator of the fraction in \eqref{sect43lem413ineq} can get arbitrarily close to $0$ near \emph{both} endpoints of the interval $(-1/e,0)$. 

The following two lemmas constitute the final building blocks in the proof of Theorem \ref{sect43thm}, with proofs in Sections \ref{sect43b0w0globmaxlemmaproof} and \ref{highpowerglobmaxlemmaproof}, respectively.

\begin{lem}\label{sect43b0w0globmax}
For any $-1/e<x<0$, we have
\[
0<\be_0(x)-\Wn(x)<\frac{1}{10}.
\]
\end{lem}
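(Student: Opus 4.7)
The lower bound $\be_0(x)-\Wn(x)>0$ is the rightmost strict inequality in Lemma~\ref{sect43w0<b0}, so only the upper bound $\be_0(x)-\Wn(x)<1/10$ requires work.

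The plan is to introduce the substitution $t:=\sqrt{1+ex}$, which bijects $(-1/e,0)$ onto $(0,1)$ with $ex=t^2-1$, and simplifies the starting value to
\[
\be_0=\frac{(t-1)\ln(1+t)}{t}.
\]
Using the auxiliary lower bound $\tilde\be_0:=-1+t<\Wn$ from Lemma~\ref{sect43w0<b0}, an algebraic rearrangement yields
\[
\be_0-\Wn \;<\; \be_0-\tilde\be_0 \;=\; \frac{(1-t)(t-\ln(1+t))}{t},
\]
so it suffices to show the right-hand side is bounded above by $1/10$ for every $t\in(0,1)$. I would then invoke the truncated Mercator estimate
\[
\ln(1+t) \;\ge\; t-\tfrac{t^2}{2}+\tfrac{t^3}{3}-\tfrac{t^4}{4}, \qquad t\in(0,1),
\]
which holds because the Mercator series has terms of alternating sign and, for $t\in(0,1)$, strictly decreasing magnitude, so any partial sum ending in a negative term lies below $\ln(1+t)$. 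This reduces the claim, after clearing positive denominators, to the one-variable polynomial inequality
\[
P(t) \;:=\; 15t^4-35t^3+50t^2-30t+6 \;>\; 0 \qquad \text{on } [0,1].
\]

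The only non-elementary step is to verify $P>0$ on $[0,1]$. I would carry out one round of Euclidean division of $P$ by its derivative, giving
\[
P(t) \;=\; \left(\tfrac{t}{4}-\tfrac{7}{48}\right)P'(t) + Q(t), \qquad Q(t):=\tfrac{155}{16}t^2-\tfrac{95}{12}t+\tfrac{13}{8},
\]
and then check that the discriminant of $Q$ equals $-85/288<0$, so $Q>0$ on all of $\mathbb{R}$ since its leading coefficient is positive. Consequently, at every critical point $t_*$ of $P$ one has $P(t_*)=Q(t_*)>0$, and together with $P(0)=P(1)=6>0$ this forces $P>0$ on $[0,1]$. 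The main obstacle is that the constant $1/10$ is close to the actual global maximum of $\be_0-\Wn$, so the polynomial lower bound for $\ln(1+t)$ must be tight enough to survive this margin: the coarser inequality $\ln(1+t)>t-t^2/2$ would yield only the weaker bound $\be_0-\Wn<1/8$, which is why I would truncate the Mercator series at the $-t^4/4$ term.
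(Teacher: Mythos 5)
Your proof follows the paper's route almost verbatim: both pass via the auxiliary lower bound $-1+\sqrt{1+ex}<\Wn(x)$, set $z:=\sqrt{1+ex}\in(0,1)$, use the degree-four Mercator lower bound $\ln(1+z)>z-\frac{z^2}{2}+\frac{z^3}{3}-\frac{z^4}{4}$, and reduce to the same quartic inequality (the paper's $-\tfrac{z(z-1)}{12}\left(3z^2-4z+6\right)<\tfrac{1}{10}$ is exactly your $P(t)>0$ after clearing denominators). The one genuine contribution is your explicit positivity certificate $P=\left(\tfrac{t}{4}-\tfrac{7}{48}\right)P'+Q$ with the discriminant of $Q$ equal to $-\tfrac{85}{288}<0$, which I have verified; the paper disposes of this last step with "one checks that the global maximum\ldots is less than $1/10$," so your Euclidean-division argument supplies a detail the paper leaves implicit.
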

\begin{rem} The upper bound $1/10$ in Lemma \ref{sect43b0w0globmax} could be replaced by, say, $0.015$, but the proof of that inequality would require more effort.
\end{rem}

\begin{lem}\label{highpowerglobmax}
For any $-1/e<x<0$, we have
\[
0<\frac{\be_0(x)-\Wn(x)}{-\Wn(x)\sqrt{1+e x}}<\frac{1}{10}.
\]
\end{lem}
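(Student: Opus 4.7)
The positivity in the claim is immediate from Lemma~\ref{sect43w0<b0} (which gives $\be_0(x)-\Wn(x)>0$) together with the strict positivity of $-\Wn(x)$ and $\sqrt{1+ex}$ on $(-1/e,0)$. For the upper bound, my plan is to introduce the substitution $t:=\sqrt{1+ex}\in(0,1)$, so that $x=(t^2-1)/e$; a direct cancellation in the definition of $\be_0$ yields the compact form
\[
\be_0(x)=\frac{(t-1)\ln(1+t)}{t}.
\]
Since $\Wn(x)<0$ and $10-t>0$, multiplying the desired inequality through by the positive quantity $10\,(-\Wn(x))\,t/(10-t)$ shows it is equivalent to
\[
h(t)<\Wn\!\left(\frac{t^2-1}{e}\right),\qquad h(t):=\frac{10(t-1)\ln(1+t)}{t(10-t)}.
\]

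The next step is to verify $h(t)\in(-1,0)$ on $(0,1)$. The upper bound is clear. The lower bound $h(t)>-1$ rearranges to $g(t):=t(10-t)-10(1-t)\ln(1+t)>0$; since $g(0)=g'(0)=0$ and a short computation gives $g''(t)=-2+10/(1+t)+20/(1+t)^2\ge 8>0$ on $[0,1]$, two integrations deliver $g(t)>0$ on $(0,1]$. Both sides of the inequality now lie in $(-1,0)\subset[-1,+\infty)$, so the strict monotonicity recorded in~\eqref{monotoneequivalence} converts it to
\[
F(t):=e\cdot h(t)\cdot e^{h(t)}-(t^2-1)<0\qquad(t\in(0,1)),
\]
and a direct substitution (using $h(0)=-1$ and $h(1)=0$) gives $F(0)=F(1)=0$.

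The main obstacle is to prove $F<0$ strictly on the open interval. A Taylor expansion near $t=0$ yields $F(t)=-t^2/50+O(t^3)$, so $F<0$ just to the right of $0$; direct evaluation at $t=1$ gives $F'(1)=10\,e\ln(2)/9-2>0$, so $F<0$ just to the left of $1$. The plan is to show that
\[
F'(t)=e\bigl(1+h(t)\bigr)\,h'(t)\,e^{h(t)}-2t
\]
has exactly one sign change in $(0,1)$ (from negative to positive): this forces $F$ to have a single interior minimum and therefore to remain strictly below the common endpoint value $0$. Inserting the explicit expressions of $h$ and $h'$, and substituting $u:=\ln(1+t)$, the uniqueness of the critical point reduces to a one-variable polynomial inequality in $(t,u)$ along the arc $u=\ln(1+t)$, $t\in(0,1)$, which can be resolved by the monotonicity-and-polynomial-reduction techniques announced in the introduction. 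This final polynomial step is the most computation-heavy part of the argument.
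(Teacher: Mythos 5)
Your setup is clean and correct as far as it goes: the change of variable $t:=\sqrt{1+ex}$ does simplify $\be_0$ to $(t-1)\ln(1+t)/t$, the rearrangement to $h(t)<\Wn\bigl((t^2-1)/e\bigr)$ is right, the check $h(t)\in(-1,0)$ via $g''\ge 8$ is sound, and the endpoint calculations $F(0)=F(1)=0$, $F(t)=-t^2/50+O(t^3)$, $F'(1)=10e\ln(2)/9-2>0$ all check out.

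The gap is in the final step, and it is a real one: the claim that the sign-change analysis of $F'$ "reduces to a one-variable polynomial inequality in $(t,u)$" after $u:=\ln(1+t)$ is not justified. The derivative is
\[
F'(t)=e\bigl(1+h(t)\bigr)h'(t)\,e^{h(t)}-2t,
\]
and the factor $e^{h(t)}=\exp\!\Bigl(\tfrac{10(t-1)u}{t(10-t)}\Bigr)=(1+t)^{10(t-1)/(t(10-t))}$ does not become rational in $(t,u)$ under your substitution --- it is a power of $1+t$ with a $t$-dependent exponent. Once you have used the $\alpha e^\alpha$ monotonicity trick to eliminate $\Wn$, you have paid for it with a genuinely double-transcendental expression, and the monotonicity-and-polynomial-reduction machinery of the appendix has nothing to grab onto. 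So as written, $F<0$ on $(0,1)$ is asserted but not proved.

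The paper escapes this trap by a different route that is worth noting. It first replaces $-\Wn(x)$ in the denominator by the smaller quantity $-\be_0(x)$ (sufficient, since it strengthens the inequality), so that $\Wn$ appears only linearly. It then parametrizes by $y:=\Wn(x)$ via $x=ye^y$, so $\Wn$ is eliminated \emph{without} invoking \eqref{monotoneequivalence}; the resulting expression involves only $e^{y+1}$, $\sqrt{1+ye^{y+1}}$, and a single outer logarithm. The inequality vanishes at $y=-1$, and differentiating once kills the logarithm; the substitutions $z:=e^{y+1}$ and $w:=\sqrt{1+yz}$ then yield a bona fide bivariate polynomial whose positivity follows from a discriminant computation. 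If you want to salvage your route, the natural fix is the same: do not use the $\alpha e^\alpha$ trick on $h(t)<\Wn(x)$, but instead parametrize $x=ye^y$ so that $\Wn(x)=y$ and $t=\sqrt{1+ye^{y+1}}$, and attack the resulting inequality in $y$ by the vanishing-at-endpoint/positive-derivative method --- though you should expect the algebra to be heavier than the paper's, since you forgo the $-\Wn\to-\be_0$ simplification and hence aim at the sharper inequality.
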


The main result of this section is given below, also proving convergence of the recursion \eqref{-1/e<x<0start} on $(-1/e,0)$.

\begin{thm}\label{sect43thm}
For any $n\in\mathbb{N}^+$ and $x\in (-1/e,0)$, the recursion \eqref{-1/e<x<0start} satisfies the uniform estimate
\[
0<\be_n(x)-\Wn(x)<\left(\frac{1}{10}\right)^{2^n}.
\]
\end{thm}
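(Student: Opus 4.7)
The plan is to iterate the quadratic error bound of Lemma \ref{sect43lem413}, combine the resulting estimate with Lemma \ref{highpowerglobmax}, and then handle the two denominator factors that appear by exploiting the elementary inequality $1+ex \le (1+\Wn(x))^2$ on $(-1/e,0)$.

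First I would prove by induction on $n$ that, for $n \ge 1$,
\[
0 < \be_n(x) - \Wn(x) < \frac{(\be_0(x)-\Wn(x))^{2^n}}{(-\Wn(x))^{2^n - 1}\,\prod_{k=0}^{n-1}(1+\be_k(x))^{2^{n-1-k}}}.
\]
The base case $n=1$ is Lemma \ref{sect43lem413} with $n=0$; the inductive step is a direct application of Lemma \ref{sect43lem413}, together with the exponent bookkeeping $2(2^n - 1) + 1 = 2^{n+1} - 1$ and $2\cdot 2^{n-1-k} = 2^{n-k}$. Using $\be_k(x) > \Wn(x)$ from Lemma \ref{lem412}, every factor $1+\be_k$ in the denominator can be replaced by the smaller $1+\Wn$; since $\sum_{k=0}^{n-1} 2^{n-1-k} = 2^n - 1$, the product collapses to $(1+\Wn(x))^{2^n - 1}$. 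Applying Lemma \ref{highpowerglobmax} in the form $\be_0 - \Wn < (-\Wn)\sqrt{1+ex}/10$ then yields
\[
\be_n(x)-\Wn(x) < \frac{(-\Wn(x))\,(1+ex)^{2^{n-1}}}{10^{2^n}\,(1+\Wn(x))^{2^n-1}}.
\]

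Next I would recast this estimate as
\[
\be_n(x)-\Wn(x) < \frac{(-\Wn(x))(1+\Wn(x))}{10^{2^n}}\cdot\left(\frac{1+ex}{(1+\Wn(x))^2}\right)^{2^{n-1}}.
\]
The first factor is at most $1/(4\cdot 10^{2^n})$, by the bound $t(1-t)\le 1/4$ applied to $t:=-\Wn(x)\in(0,1)$. Thus the whole argument reduces to the geometric claim $\frac{1+ex}{(1+\Wn(x))^2} \le 1$ on $(-1/e,0)$.

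This last inequality is the main obstacle, but it is elementary. Substituting $w := \Wn(x) \in (-1,0)$ (so that $ex = w\,e^{w+1}$), the claim reduces to $e^{w+1} \ge w+2$ on $(-1,0)$. Setting $g(w):=e^{w+1}-(w+2)$, I have $g(-1)=g'(-1)=0$ and $g''(w)=e^{w+1}>0$, so $g'>0$ on $(-1,0)$ and hence $g>0$ there. Assembling the three ingredients gives $\be_n(x)-\Wn(x) < (1/4)\cdot 10^{-2^n} < 10^{-2^n}$, which is the desired conclusion.
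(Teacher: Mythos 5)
Your proposal is correct and yields the slightly sharper bound $\be_n(x)-\Wn(x)<\tfrac{1}{4}\cdot 10^{-2^n}$. The iteration of Lemma \ref{sect43lem413} is essentially the same as the paper's, but the post-processing is genuinely different. The paper replaces every factor $1+\be_k$ by $\sqrt{1+ex}$ \emph{before} iterating (using $1+\be_k>1+\Wn>\sqrt{1+ex}$ from Lemmas \ref{lem412} and \ref{sect43w0<b0}), arrives at $\left(\be_0-\Wn\right)\cdot\left(\frac{\be_0-\Wn}{-\Wn\sqrt{1+ex}}\right)^{2^n-1}$, and then needs \emph{two} separate uniform estimates: Lemma \ref{sect43b0w0globmax} for the isolated factor $\be_0-\Wn<\tfrac{1}{10}$ and Lemma \ref{highpowerglobmax} for the ratio. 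You instead replace $1+\be_k$ only by $1+\Wn$, keep the telescoped power bookkeeping explicit, and then split off the compound factor $(-\Wn)(1+\Wn)$, which you bound by $\tfrac14$ via the elementary $t(1-t)\le\tfrac14$. This makes Lemma \ref{sect43b0w0globmax} unnecessary in your proof, which is a real economy. The remaining ingredient you prove from scratch, namely $1+ex\le(1+\Wn(x))^2$, is exactly the square of the second inequality $\sqrt{1+ex}<1+\Wn(x)$ in Lemma \ref{sect43w0<b0}, so that piece of your argument is redundant and could simply be cited; apart from that, your reparametrization $ex=we^{w+1}$ and the convexity check for $e^{w+1}\ge w+2$ are a clean rederivation. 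In short: same iterative core, but a leaner final decomposition that trades one dedicated lemma for an AM--GM-type observation and buys an extra factor of $\tfrac14$.
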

\begin{proof} Due to \eqref{sect43lem413ineq} and Lemmas \ref{sect43w0<b0} and \ref{lem412}, we have
\[
0<\be_{n}(x)-\Wn(x)<\frac{(\be_{n-1}(x)-\Wn(x))^2}{-\Wn(x)(1+\be_{n-1}(x))}<\frac{(\be_{n-1}(x)-\Wn(x))^2}{-\Wn(x)\sqrt{1+e x}},
\]
so, recursively, we get
\[
0<\be_{n}(x)-\Wn(x)<\left(\be_0(x)-\Wn(x)\right)\cdot \left(\frac{\be_0(x)-\Wn(x)}{-\Wn(x)\sqrt{1+e x}}\right)^{-1+2^n}.
\]
Now Lemmas \ref{sect43b0w0globmax} and \ref{highpowerglobmax} finish the proof.
\end{proof}

Theorem \ref{sect43thm} is illustrated by Figure \ref{fig3region}.

\begin{figure}
\begin{center}
\includegraphics[width=0.66\textwidth]{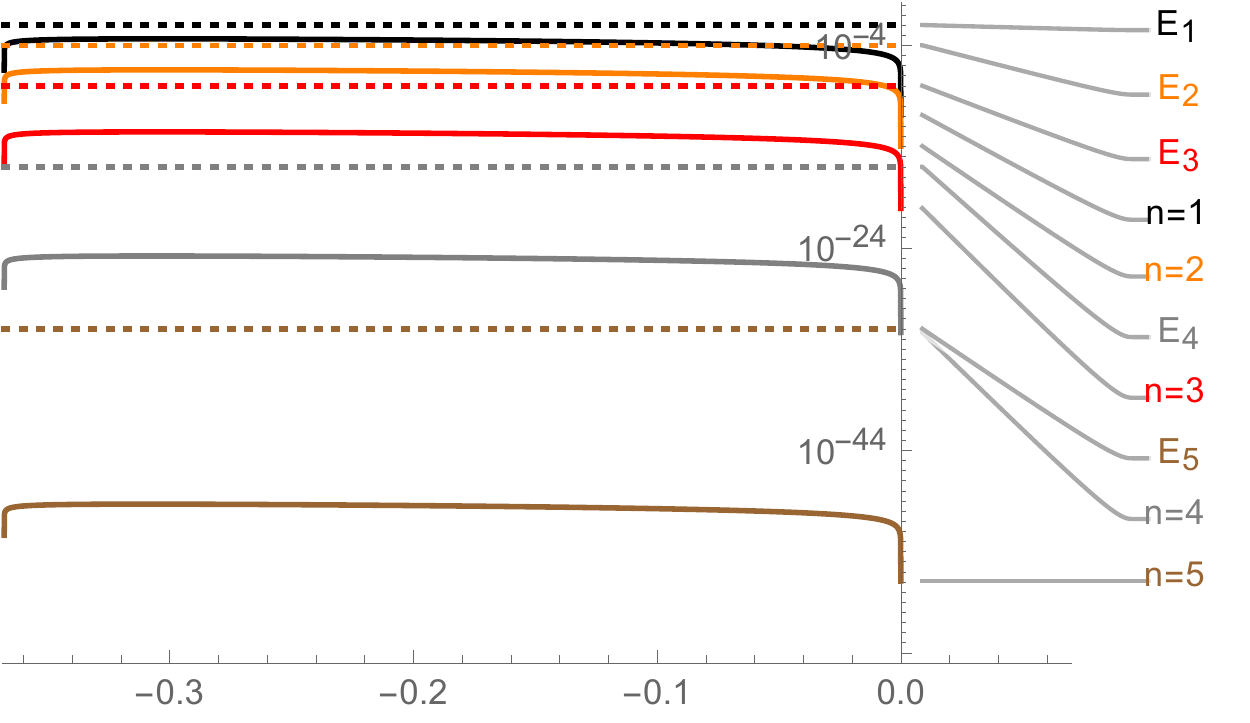}
\caption{A semi-log plot illustrating Theorem \ref{sect43thm}. The continuous curves correspond to the actual differences $\be_n(x)-\Wn(x)$ for various values of $n$ (and they tend to $0$ as $x$ converges to any of the endpoints of the interval $(-1/e, 0)$), whereas the dotted lines $E_n$ depict the uniform estimates $1/10^{2^n}$.\label{fig3region}}
\end{center}
\end{figure}

\begin{rem}
In \cite[Section 6]{johansson}, the first 9950 digits of the quantity $\Wn\left(-\frac{1}{e}+10^{-100}\right)$ near the branch point are computed. We computed $\be_{14}\left(-\frac{1}{e}+10^{-100}\right)$ by using \textit{Mathematica} ($2^{14}>9950$), and found that all the first and last few digits displayed in \cite{johansson} are again in agreement---the computations within two different systems yielded the same result.
\end{rem}

\subsection{Convergence to $\Wm$ on the interval $(-1/e,0)$}\label{subsection44}

In this section we propose suitable starting values for the recursion \eqref{IBrecursion} to converge to $\Wm(x)$ for any $x\in(-1/e,0)$. The convergence is again proved via simple (uniform) error estimates.

Although the statements and proofs are similar to those in Sections \ref{IBconvx>e}--\ref{subsection43}, let us highlight some differences, including
\begin{itemize}
\item the branch $\Wm$ over the bounded interval $(-1/e,0)$ is unbounded---with a branch point at the left endpoint, and a singularity at the right endpoint---hence we will split $(-1/e,0)$ when defining the recursion starting values $\be_0(x)$;
\item when using the bijective reparametrization $x=y e^y$ in the proofs of transcendental inequalities to eliminate $\Wm$, this time $\Wm(y e^y)=y$ will hold for $y<-1$ (cf.~the identity $\Wn(y e^y)=y$ for $-1<y<0$ used earlier);
\item the branch $\Wm$ is strictly decreasing, so instead of \eqref{monotoneequivalence} we now have 
\begin{equation}\label{reversemonotoneequivalence}
\alpha \ \boxed{ \lesseqqgtr} \ \beta\text{\ if and only if } \alpha e^\alpha \ \boxed{ \gtreqqless} \ \beta e^\beta,
\end{equation}
for any $\alpha, \beta\in (-\infty,-1]$.
\end{itemize}
Due to the above reasons, the proofs will be presented in detail.\\

For $x\in(-1/e,0)$, we define the recursion as follows:
\begin{equation}\label{W-1start}
\left\{
\begin{aligned}
  \be_0(x):=  & -1-\sqrt{2} \sqrt{1+e x}\quad\quad\quad \text{ for } -1/e<x\le-1/4, \\ 
  \be_0(x):=  & \ln (-x)-\ln (-\ln (-x))\quad\quad \text{ for } -1/4<x< 0, \\ 
  \be_{n+1}(x):=  &  \frac{\be_n(x)}{1+\be_n(x)}\left(1+\ln\left(\frac{x}{\be_n(x)}\right)\right)\quad\quad (n\in\mathbb{N}). 
\end{aligned}
\right.
\end{equation}

\begin{rem}
\emph{(i)} The point $-1/4$ to split the interval $(-1/e,0)$ in the definition of $\be_0$ in \eqref{W-1start} is somewhat arbitrary; it has been chosen to make the constants in the estimates of this section  
simple, small positive numbers.\\
\emph{(ii)} With the above definition, $\be_0$ is a piecewise continuous function. In fact, it is possible to construct a function that is continuous over the \emph{whole} interval $(-1/e,0)$ and approximates $\Wm$ so well that all the lemmas and the theorem below would remain true (with slightly different constants, of course). The choice $\widetilde{\be}_0(x):=\ln (-x)-\ln (-\ln (-x))$, for example, would \emph{not} be an appropriate one on the interval $(-1/e,0)$, as it would result in some singular estimates near $x=-1/e$. One suitable choice for the starting value of \eqref{W-1start} could be
\[
\widetilde{\be}_0(x):=\frac{e x \ln \left(1-\sqrt{1+e x}\right) \ln \left(\frac{1+e x-\sqrt{1+e x}}{
   \ln \left(1-\sqrt{1+e x}\right)}\right)}{1+e x-\sqrt{1+e x}+e x \ln \left(1-\sqrt{1+e x}\right)} \quad\quad (x\in (-1/e,0)),
\]
but with this formula the proofs of the estimates would become more involved. We remark that the difference $\Wm-\widetilde{\be}_0$ is strictly increasing and satisfies 
\[
0<\Wm(x)-\widetilde{\be}_0(x)<\lim_{x\to 0^-}\left(\Wm(x)-\widetilde{\be}_0(x)\right)=\ln(2)-\frac{1}{2}\approx 0.193
\]
for any $x\in (-1/e,0)$. The expression for $\widetilde{\be}_0(x)$ has been obtained by taking \emph{two} iteration steps with \eqref{IBrecursion} started from $-1-\sqrt{1+e x}$ (cf.~Remark \ref{section43rem411}).\\
\emph{(iii)} Regarding the factor $\sqrt{2}$ in the definition of $\be_0$ in  \eqref{W-1start}, it directly appears in the Puiseux expansion of $\W$ about $x=-1/e$, and it gives a better approximation for $\Wm$ close to $-1/e$. However, the constant $\sqrt{2}$ was \emph{not} included in the starting value of the recursion \eqref{-1/e<x<0start}, because this way that $\be_0$ yields an overall better estimate  for $\Wn$ on $(-1/e,0)$.\\
\emph{(iv)} The choice for the other starting value in \eqref{W-1start} is motivated by the estimate 
\eqref{W-1bound}. This estimate appears in \cite{faris} (but by using a different---equivalent---parametrization).\\
\emph{(v)} As we will see, the sequence $\be_n$ is only monotone for $n\in\mathbb{N}^+$ (and not for $n\in\mathbb{N}$). Again, this is a consequence of the trade-off between simple proofs and 
 good uniform error estimates. 
\end{rem}

The first lemma estimates the initial difference; its proof is found in Section \ref{sect44lem421lemmaproof}.

\begin{lem}\label{sect44lem421}
For any $x\in(-1/e,0)$, the starting value in \eqref{W-1start} satisfies the estimates
\begin{equation}\label{sect44lem21est}
0<\be_0(x)-\Wm(x)<1/2.
\end{equation}
\end{lem}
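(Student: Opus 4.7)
The plan is to split the proof according to the two pieces of \eqref{W-1start}: call them $I_1 := (-1/e, -1/4]$ where $\be_0(x) = -1 - \sqrt{2}\sqrt{1+ex}$, and $I_2 := (-1/4, 0)$ where $\be_0(x) = \ln(-x) - \ln(-\ln(-x))$. On both pieces $\be_0(x) \le -1$ (trivially on $I_1$, and on $I_2$ because $\ln(-x) < -\ln 4 < -1$), so both $\be_0(x)$ and $\Wm(x)$ lie in $(-\infty, -1]$ and \eqref{reversemonotoneequivalence} converts any comparison ``$\alpha \gtreqqless \Wm(x)$'' into the equivalent ``$\alpha e^{\alpha} \lesseqqgtr x$''.

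For positivity on $I_1$, I substitute $u := \sqrt{1+ex} \in (0, \sqrt{1-e/4}]$, so that $x = (u^2-1)/e$, and reduce $\be_0(x) e^{\be_0(x)} < x$ (after multiplication by $e$) to $g(u) := u^2 - 1 + (1+\sqrt{2}u) e^{-\sqrt{2} u} > 0$. Since $g(0) = 0$ and a short computation gives $g'(u) = 2u\bigl(1 - e^{-\sqrt{2} u}\bigr) > 0$ for $u > 0$, this is immediate. For the upper bound $\be_0(x) - \Wm(x) < 1/2$ on $I_1$, I use the bijective reparametrization $y := \Wm(x)$, $z := -y$, under which $I_1$ maps onto $(1, z_0]$ with $z_0 := -\Wm(-1/4)$ satisfying $z_0 e^{-z_0} = 1/4$ and $x = -z e^{-z}$. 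The inequality becomes $z - 3/2 < \sqrt{2}\sqrt{1 - z e^{1-z}}$, trivial for $z \le 3/2$; for $z \in (3/2, z_0]$, squaring yields the equivalent condition $q(z) := z^2 - 3z + 1/4 + 2z e^{1-z} < 0$. The key step is the convexity of $q$ on $[3/2, z_0]$, proved by $q''(z) = 2 - 2(2-z)e^{1-z} > 0$ (trivial for $z \ge 2$ and, for $z \in [3/2, 2]$, from the bound $(2-z)e^{1-z} \le (1/2)e^{-1/2} < 1$). Convexity reduces the task to verifying the two endpoint values: $q(3/2) = -2 + 3 e^{-1/2} < 0$ (equivalent to $4e > 9$), and $q(z_0) = z_0^2 - 3 z_0 + 1/4 + e/2 < 0$ (obtained by substituting $z_0 e^{-z_0} = 1/4$), which I verify by showing that $z_0$ lies strictly between the two real roots $(3 \pm \sqrt{8 - 2e})/2$ of this quadratic, via explicit rational estimates such as $1 < z_0 < 2.3$ that follow from $e^{2.3} > 9.2$.

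On $I_2$, positivity is immediate from the upper half of \eqref{W-1bound}, which is strict on the open interval. For the upper bound, I set again $z := -\Wm(x)$, now ranging over $(z_0, +\infty)$; using $x = -z e^{-z}$ we get $\ln(-x) = \ln z - z$ and $-\ln(-x) = z - \ln z > 0$, which collapses the difference to
\[
\be_0(x) - \Wm(x) \;=\; \phi(z) \;:=\; \ln\!\left(\frac{z}{z - \ln z}\right).
\]
Differentiation gives $\phi'(z) = (1 - \ln z)/\bigl(z(z - \ln z)\bigr)$, so $\phi$ is strictly increasing on $(z_0, e)$ and strictly decreasing on $(e, +\infty)$; here $z_0 < e$ follows from $e^{e-1} > 4$, i.e., $e - 1 > 2 \ln 2$. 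Hence $\phi(z) \le \phi(e) = 1 - \ln(e - 1)$ on $(z_0, +\infty)$, and the bound $\phi(e) < 1/2$ rewrites as $\sqrt{e}(\sqrt{e} - 1) > 1$, easily verified from rigorous rational estimates such as $e > 2.71$ and $\sqrt{e} < 1.65$.

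The main obstacle I expect is the upper bound on $I_1$: since $z_0$ is defined only implicitly, the argument cannot rely on an exact value for it, and the convexity of $q$ is what makes the reduction to two endpoint checks possible, with the identity $z_0 e^{-z_0} = 1/4$ closing the gap at the endpoint $z_0$. All other parts of the proof reduce either to monotonicity of a one-variable elementary function or to direct use of the previously known estimate \eqref{W-1bound}.
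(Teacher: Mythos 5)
Your proof is correct, but it departs from the paper's at several points, most notably in the treatment of the branch $-1/e < x \le -1/4$.

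The paper's proof is organized around one structural observation: the function $f(x) := -1 - \sqrt{2}\sqrt{1+ex} - \Wm(x)$ is strictly increasing on \emph{all} of $(-1/e,0)$ (Step 3 of the paper's proof, via the reparametrization $x = ye^y$ and a short chain of reductions to a sign check on an elementary expression). This single monotonicity fact does double duty: it gives positivity on $I_1$ for free from $f(-1/e)=0$, and it reduces the upper bound on $I_1$ to a single evaluation $f(-1/4) < 1/2$. You instead establish positivity on $I_1$ by a fresh substitution $u = \sqrt{1+ex}$ and the monotone function $g(u) = u^2-1+(1+\sqrt{2}u)e^{-\sqrt2 u}$, then handle the upper bound by a separate device: switch to $z=-\Wm(x)$, square to get $q(z) = z^2 - 3z + 1/4 + 2ze^{1-z}$, and argue via strict convexity of $q$ plus two endpoint checks. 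Both approaches work; yours costs an extra endpoint verification (at $z=3/2$) and the convexity computation, but it avoids having to establish that the error function itself is monotone. The final inequality you verify at $z_0$—namely that $z_0$ lies below the larger root $(3+\sqrt{8-2e})/2 \approx 2.3005$—is in fact exactly the inequality the paper verifies in its Step 4, just presented in a different coordinate, so the tightness (margin $e < 2.72$) is unavoidable either way and not a defect of your route. On $I_2$ the two arguments are essentially the same: your change of variable $z = -\Wm(x)$ collapses the difference to $\phi(z) = \ln\bigl(z/(z-\ln z)\bigr)$, whose maximum $1-\ln(e-1)$ at $z=e$ is precisely what the paper finds directly at $x = -e^{1-e}$; your reparametrization makes the critical-point computation slightly cleaner. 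One small cosmetic point: on $I_2$ you cite \eqref{W-1bound} (stated with $\le$) for positivity and assert strictness without proof, but this is superfluous—your own formula $\phi(z)=\ln\bigl(z/(z-\ln z)\bigr) > 0$ for $z>1$ already gives strict positivity directly, so you could drop the citation and use $\phi$ throughout.
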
 

The well-definedness and monotonicity properties of the sequence $\be_n$, and the inductive part of the error estimates are summarized next. The proof of the lemma is given is Section \ref{sect44lem419lemmaproof}.

\begin{lem}\label{sect44lem419}
For any $x\in(-1/e,0)$ and $n\in\mathbb{N}^+$, the recursion \eqref{W-1start} is well-defined, real-valued, and satisfies the following:
\begin{equation}\label{sect44lem419mon}
\be_n(x)<\be_{n+1}(x)<\Wm(x)<\be_0(x)<-1,
\end{equation}
and
\begin{equation}\label{sect44lem419est}
0<\Wm(x)-\be_{n}(x)<\left(\be_0(x)-\Wm(x)\right)\cdot \left(\frac{\be_0(x)-\Wm(x)}{|\Wm(x)|\cdot |1+\be_0(x)|}\right)^{-1+2^n}.
\end{equation}
\end{lem}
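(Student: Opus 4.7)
The plan is to reduce every assertion of the lemma to elementary facts about $g(r):=r-1-\ln r$, which is nonnegative for $r>0$ and vanishes only at $r=1$. First I would use the identity \eqref{functional3}, $\Wm(x)=\ln(x/\Wm(x))$, to rewrite $1+\ln(x/\be_n)=1+\Wm+\ln(\Wm/\be_n)$. Substituting this into \eqref{W-1start} and setting $r_n:=\Wm(x)/\be_n(x)>0$, a short algebraic manipulation produces the two key identities
\[
\Wm-\be_{n+1}=\frac{\be_n\cdot g(r_n)}{1+\be_n},\qquad \be_{n+1}-\be_n=\frac{\be_n\bigl[(r_n-1)\be_n+\ln r_n\bigr]}{1+\be_n}.
\]
Since $\be_n<-1$ throughout the argument, the common prefactor $\be_n/(1+\be_n)=|\be_n|/|1+\be_n|$ is strictly positive, so all sign questions reduce to signs of the bracketed expressions.

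Next I would establish the monotonicity claim \eqref{sect44lem419mon}. The bound $\be_0<-1$ is a direct inspection of the two pieces of the starting value (for $x\le-1/4$ it follows from $1+ex>0$; for $x>-1/4$, writing $u:=-\ln(-x)>\ln 4>1$ gives $\be_0=-u-\ln u<-1$). Positivity of $g(r_n)$ in the first identity then yields $\be_{n+1}<\Wm$ for every $n\in\Nn$, while Lemma \ref{sect44lem421} supplies $\be_1<\Wm<\be_0$. The delicate point is the strict increase $\be_n<\be_{n+1}$ for $n\ge 1$: assuming inductively $\be_n<\Wm<-1$, one has $r_n\in(0,1)$, and the substitution $s:=|\be_n|/|\Wm|=1/r_n>1$ converts the bracket in the second identity into $|\Wm|(s-1)-\ln s$. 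This is positive because $|\Wm(x)|>1$ on $(-1/e,0)$ and $s-1>\ln s$ for $s>1$; I expect this monotonicity step to be the main obstacle, as it is the only place that needs a nontrivial convexity inequality.

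For the error estimate \eqref{sect44lem419est}, the decisive ingredient is a quadratic bound on $g$. Since $g''(r)=r^{-2}$, the Taylor--Lagrange remainder gives $g(r)=(r-1)^2/(2\xi^2)$ for some $\xi$ strictly between $1$ and $r$; hence $g(r)<(r-1)^2/2$ for $r>1$ and $g(r)<(r-1)^2/(2r^2)$ for $0<r<1$. Applying the first version to $r_0>1$ and the second to $r_n\in(0,1)$ ($n\ge 1$), then substituting $r_n=\Wm/\be_n$ and simplifying via $\be_n/(1+\be_n)=|\be_n|/|1+\be_n|$, a routine calculation produces
\[
\Wm-\be_1<\frac{(\be_0-\Wm)^2}{2|\be_0|\cdot|1+\be_0|},\qquad \Wm-\be_{n+1}<\frac{(\Wm-\be_n)^2}{2|\Wm|\cdot|1+\be_n|}\quad (n\ge 1).
\]

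To assemble the product form in \eqref{sect44lem419est}, I would invoke Lemma \ref{sect44lem421} to bound $|\Wm|<|\be_0|+1/2<2|\be_0|$ (valid because $|\be_0|>1$); this weakens the base case to $\Wm-\be_1<(\be_0-\Wm)^2/(|\Wm|\cdot|1+\be_0|)$. For $n\ge 1$, the monotonicity chain $\be_n<\be_0<-1$ gives $|1+\be_n|>|1+\be_0|$, which weakens the inductive bound to the same uniform shape $\Wm-\be_{n+1}<(\Wm-\be_n)^2/(|\Wm|\cdot|1+\be_0|)$. A straightforward induction on $n\ge 1$ then telescopes this single recursion into the required power. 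The subtle point behind the whole assembly is that $|\Wm|$ and $|1+\be_0|$ can both become arbitrarily small near the endpoints of $(-1/e,0)$, so the uniform character of the bound really rests on the piecewise choice of $\be_0$ in \eqref{W-1start} that underlies Lemma \ref{sect44lem421}.
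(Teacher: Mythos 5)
Your decomposition through $g(r):=r-1-\ln r$ and the ratio $r_n:=\Wm/\be_n$ is a genuinely different and more unified route than the paper's (which works from the $\ln(1-z)\le -z$ estimate applied to $z=(\Wm-\be_n)/\Wm$), and your monotonicity argument---rewriting the bracket as $|\Wm|(s-1)-\ln s$ with $s=|\be_n|/|\Wm|>1$ and using $|\Wm|>1$ together with $\ln s<s-1$---is correct and clean. The two algebraic identities you state are also both correct.

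There is, however, a genuine gap in the quadratic error estimate for $n\ge 1$. Your Taylor--Lagrange argument gives $g(r)<(r-1)^2/(2r^2)$ for $r\in(0,1)$, and substituting $r_n=\Wm/\be_n$ into $\Wm-\be_{n+1}=\frac{\be_n}{1+\be_n}g(r_n)$ yields
\[
\Wm-\be_{n+1}<\frac{|\be_n|}{|1+\be_n|}\cdot\frac{(r_n-1)^2}{2r_n^2}=\frac{|\be_n|}{|1+\be_n|}\cdot\frac{(\Wm-\be_n)^2}{2\Wm^2}=\frac{|\be_n|}{|\Wm|}\cdot\frac{(\Wm-\be_n)^2}{2|\Wm|\,|1+\be_n|},
\]
not the claimed bound $\frac{(\Wm-\be_n)^2}{2|\Wm|\,|1+\be_n|}$: the factor $|\be_n|/|\Wm|$, which is strictly greater than $1$ for every $n\ge 1$, has been silently dropped in the ``routine calculation.'' The bound you claim is equivalent to $g(r)<(r-1)^2/(2r)$ for $r\in(0,1)$, which is a \emph{sharper} inequality than what Taylor--Lagrange gives (since the remainder point $\xi$ only satisfies $\xi>r$, not $\xi>\sqrt{r}$). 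That sharper inequality is in fact true---one can check that $h(r):=(r-1)^2/(2r)-g(r)$ satisfies $h(1)=0$ and $h'(r)=-\frac{1}{2}(1-1/r)^2\le 0$---but it is not a consequence of the argument you give, and so needs to be supplied. Alternatively, and more simply, you can bypass the issue entirely by following the paper and using the uniform estimate $\ln(1-z)<-z$ (valid for $z<1$, $z\ne 0$) in the identity $\Wm-\be_{n+1}=\frac{1}{1+\be_n}\bigl(\Wm-\be_n+\be_n\ln(1-z)\bigr)$ with $z=(\Wm-\be_n)/\Wm$; this gives the slightly weaker but gap-free bound $\Wm-\be_{n+1}<(\Wm-\be_n)^2/(|\Wm|\,|1+\be_n|)$ uniformly for all $n$, and the subsequent telescoping (via $|1+\be_n|>|1+\be_0|$) goes through unchanged.

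Your base case ($n=0$, $r_0>1$) is correct as stated, and so is the use of Lemma~\ref{sect44lem421} to get $|\Wm|<2|\be_0|$, so the only repair needed is the one indicated above.
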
 
Regarding the upper estimate \eqref{sect44lem419est}, note that this time its denominator can get arbitrarily close to $0$ near the left endpoint of the interval $(-1/e,0)$, and both terms in its numerator are singular as $x\to 0^-$. The following lemma yields suitable upper estimates of this fraction. Its proof is found in Section \ref{sect44lem422lemmaproof}.

\begin{lem}\label{sect44lem422}
For any $x\in(-1/e,0)$, the starting value in \eqref{W-1start} satisfies the estimates
\begin{equation}\label{sect44lem22est}
0<\frac{\be_0(x)-\Wm(x)}{|\Wm(x)|\cdot |1+\be_0(x)|}<\frac{1}{2}.
\end{equation}
Moreover, for $-1/4<x< 0$ we also have
\begin{equation}\label{sect44lem22estsharper}
0<\frac{\be_0(x)-\Wm(x)}{|\Wm(x)|\cdot |1+\be_0(x)|}<\frac{1/2}{|\ln (-x)-\ln (-\ln (-x)) |\cdot | 1+\ln (-x)-\ln (-\ln (-x))|}.
\end{equation}
\end{lem}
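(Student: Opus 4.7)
Positivity of the middle expression in both \eqref{sect44lem22est} and \eqref{sect44lem22estsharper} is immediate from Lemma \ref{sect44lem419}, which gives $\Wm(x) < \be_0(x) < -1$. For the upper bounds, the plan is to split according to the piecewise definition of $\be_0$ in \eqref{W-1start} and, on each subinterval, eliminate $\Wm(x)$ via the bijective substitution $x = y e^y$, $y = \Wm(x) \in (-\infty,-1]$ (valid by \eqref{reversemonotoneequivalence}).

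On $-1/e < x \le -1/4$, where $\be_0(x) = -1-\sqrt{2}\sqrt{1+ex}$, I would set $v := -1-\Wm(x) \in (0,v_0]$ with $v_0 := -1-\Wm(-1/4)$, and $u := \sqrt{1+ex}$; the substitution yields $u^2 = 1-(1+v)e^{-v}$. After clearing denominators and squaring (both sides positive), \eqref{sect44lem22est} becomes equivalent to
\[
h(v) := e^v(9+6v-v^2) - (9+15v+7v^2+v^3) > 0 \qquad (v \in (0,v_0]).
\]
A short computation gives $h(0) = h'(0) = 0$, $h''(0) = 5$, $h'''(0) = 15$, and $h^{(4)}(v) = e^v(21-2v-v^2)$, which is positive on $(0,\sqrt{22}-1)$. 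Since \eqref{W-1bound} already yields $\Wm(-1/4) \ge -e\ln 4/(e-1) \approx -2.19$, one has $v_0 < 1.2 < \sqrt{22}-1 \approx 3.69$, so $h^{(4)}>0$ throughout $(0,v_0]$. Integrating successively, $h''' > 15$, $h'' > 5$, $h' > 0$, and $h > 0$ on $(0,v_0]$, as required.

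On $-1/4 < x < 0$, where $\be_0(x) = \ln(-x)-\ln(-\ln(-x))$, I would prove the sharper bound \eqref{sect44lem22estsharper} first. The common factor $|1+\be_0(x)|$ cancels, reducing the claim to $2|\be_0(x)|(\be_0(x)-\Wm(x)) < |\Wm(x)|$. Setting $t := -\ln(-x) > \ln 4$ gives $\be_0(x) = -t-\ln t$, while $s := -\Wm(x)$ satisfies $s = t + \ln s$ (hence $s > 1$ and $\ln s = s - t$). Writing $A := t+\ln t$ and $B := \ln(s/t) = s - A$, the target rearranges to $A(2B-1) < B$, which holds whenever $B < 1/2$. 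Viewed as a function of $s$ alone, $B(s) = \ln s - \ln(s-\ln s)$ has derivative $(1-\ln s)/(s(s-\ln s))$, hence its unique maximum on $(1,+\infty)$ is attained at $s = e$, where $B(e) = 1 - \ln(e-1) < 1/2$ (equivalently, $(e-1)^2 > e$, which holds since $e > (3+\sqrt{5})/2$). The coarser \eqref{sect44lem22est} on this subinterval then follows from \eqref{sect44lem22estsharper} because $|\be_0(x)|\cdot|1+\be_0(x)| = (t+\ln t)(t+\ln t-1) > 1$: with $\tau := t+\ln t$, this is $\tau > (1+\sqrt{5})/2 \approx 1.618$, which holds at $t = \ln 4$ (value $\approx 1.713$) and is monotone increasing in $t$.

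The main obstacle is the polynomial-exponential inequality in Case 1: $h$ does turn negative for large $v$, so the proof genuinely relies on the restriction $v \le v_0$, and one needs the explicit numerical bound on $\Wm(-1/4)$ from \eqref{W-1bound} to place $v_0$ safely in the range where $h^{(4)} > 0$. Case 2 is much cleaner once one notices the reduction to $A(2B-1) < B$ and the universal bound $B(s) < 1/2$ on $s > 1$.
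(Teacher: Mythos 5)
Your proof is correct, and it diverges from the paper's in an interesting way on the branch-point subinterval. On $-1/e < x \le -1/4$, the paper first throws away the factor $|\Wm(x)| > 1$ from the denominator, so it only has to establish $\frac{\be_0(x)-\Wm(x)}{|1+\be_0(x)|} < \tfrac12$; after the parametrization $x=y e^y$, squaring, and the shift $z=y+1$, this becomes $9-2z^2+9e^z(z-1)>0$ on $(-1.2,0)$, handled by replacing $e^z$ with its degree-$6$ Taylor truncation and factoring the resulting polynomial. You keep $|\Wm|$ in play and arrive at the sharper exponential--polynomial inequality $h(v)=e^v(9+6v-v^2)-(9+15v+7v^2+v^3)>0$, proved by the iterated-differentiation scheme from the vanishing point $v=0$. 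Your derivatives check out ($h(0)=h'(0)=0$, $h''(0)=5$, $h'''(0)=15$, $h^{(4)}(v)=e^v(21-2v-v^2)$), the range constraint $v_0<\sqrt{22}-1$ is comfortably satisfied, and your appeal to the lower bound in \eqref{W-1bound} to locate $v_0$ is legitimate since the paper cites it as known. One could also avoid \eqref{W-1bound} and simply check $\sqrt{22}\,e^{-\sqrt{22}}<1/4$ via \eqref{reversemonotoneequivalence}, which is essentially what the paper does when it asserts that $(-2.2,-1)\ni y\mapsto ye^y$ covers $(-1/e,-1/4]$.

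On $-1/4<x<0$, the paper simply cites Lemma \ref{sect44lem421} (which supplies $\be_0-\Wm<1/2$) and the trivial inequality $|\Wm|>|\be_0|$ to get the chain $\frac{\be_0-\Wm}{|\Wm|\,|1+\be_0|}<\frac{1/2}{|\Wm|\,|1+\be_0|}<\frac{1/2}{|\be_0|\,|1+\be_0|}$. Your reduction to $A(2B-1)<B$ is an equivalent reformulation of the same chain, and your proof that $B<\tfrac12$ via the univariate function $B(s)=\ln s-\ln(s-\ln s)$ having its unique maximum $1-\ln(e-1)$ at $s=e$ is precisely the content of Step 2 of the paper's Lemma \ref{sect44lem421} (maximum of $f_{\ref{sect44lem421lemmaproof}}$ at $x=-e^{1-e}$, i.e.\ $\Wm=-e$), just carried out in the cleaner $s$-parametrization. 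Net effect: your Case 2 re-proves a supporting lemma instead of citing it, but with a simpler derivative computation; your Case 1 is a genuinely different (and arguably more elementary, Taylor-free) attack on the exponential inequality, at the modest cost of tracking one extra factor.
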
 

Summarizing the above, we have the following main result.
\begin{thm}\label{W-1thm}
For any $-1/e<x<0$ and $n\in\mathbb{N}^+$, the recursion \eqref{W-1start} satisfies
\[
0<\Wm(x)-\be_{n}(x)< \left(\frac{1}{2}\right)^{2^n}.
\]
In particular, for $-1/4<x< 0$, the sharper estimate
\[
\Wm(x)-\be_{n}(x)< \left(\frac{1}{2}\right)^{2^n} \left(\frac{1}{|\ln (-x)-\ln (-\ln (-x)) |\cdot | 1+\ln (-x)-\ln (-\ln (-x))|}\right)^{-1+2^n}
\]
also holds.
\end{thm}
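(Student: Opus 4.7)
The plan is to assemble the theorem directly from the three preceding lemmas of this subsection, since essentially all of the work has already been done there. By Lemma \ref{sect44lem419}, for every $n\in\mathbb{N}^+$ the iterate satisfies
\[
0<\Wm(x)-\be_n(x)<\bigl(\be_0(x)-\Wm(x)\bigr)\cdot\left(\frac{\be_0(x)-\Wm(x)}{|\Wm(x)|\cdot|1+\be_0(x)|}\right)^{-1+2^n},
\]
so the only remaining task is to supply uniform upper bounds for the two $x$-dependent quantities on the right: the linear prefactor $\be_0(x)-\Wm(x)$, and the base of the exponentiation.

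For the first (uniform) inequality of the theorem, I would substitute the bound $\be_0(x)-\Wm(x)<1/2$ from Lemma \ref{sect44lem421} into the prefactor, and the bound $\frac{\be_0(x)-\Wm(x)}{|\Wm(x)|\cdot|1+\be_0(x)|}<\frac{1}{2}$ from the first half of Lemma \ref{sect44lem422} into the base. Multiplying these together yields
\[
\Wm(x)-\be_n(x)<\frac{1}{2}\cdot\left(\frac{1}{2}\right)^{-1+2^n}=\left(\frac{1}{2}\right)^{2^n},
\]
which is precisely the first claim.

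For the sharper estimate on $(-1/4,0)$, I would keep the factor $\be_0(x)-\Wm(x)<1/2$ from Lemma \ref{sect44lem421} for the prefactor, but in place of the crude bound $1/2$ for the base I would use the sharper inequality \eqref{sect44lem22estsharper} of Lemma \ref{sect44lem422}. Pulling the constant $1/2$ out of the $(-1+2^n)$-th power and combining it with the prefactor $1/2$ produces $(1/2)^{2^n}$ multiplied by the reciprocal of $|\ln(-x)-\ln(-\ln(-x))|\cdot|1+\ln(-x)-\ln(-\ln(-x))|$ raised to the power $-1+2^n$, matching the displayed sharper bound.

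The theorem itself therefore presents no real obstacle: it is a one-line combination of the three lemmas above. The genuine difficulty sits inside those lemmas—in particular in Lemma \ref{sect44lem422}, where the singular behavior of $\Wm$ near the right endpoint and of $1+\be_0$ near the branch point $-1/e$ both have to be controlled simultaneously, which is why the domain $(-1/e,0)$ had to be split at $-1/4$ in the definition of $\be_0$ in \eqref{W-1start}.
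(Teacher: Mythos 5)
Your proposal is correct and follows essentially the same route as the paper, which simply states that the theorem ``directly follows by combining Lemma \ref{sect44lem419} with Lemmas \ref{sect44lem421} and \ref{sect44lem422}''; you have merely spelled out the mechanics of that combination, including the correct splitting of the $1/2$ factor out of the $(-1+2^n)$-th power for the sharper estimate.
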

\begin{proof} The proof directly follows by combining Lemma \ref{sect44lem419} with Lemmas \ref{sect44lem421} and \ref{sect44lem422}.
\end{proof}

Theorem \ref{W-1thm} is illustrated by Figure \ref{fig4region}.

\begin{figure}
\begin{subfigure}{.5\textwidth}
  \centering
  \includegraphics[width=.9\linewidth]{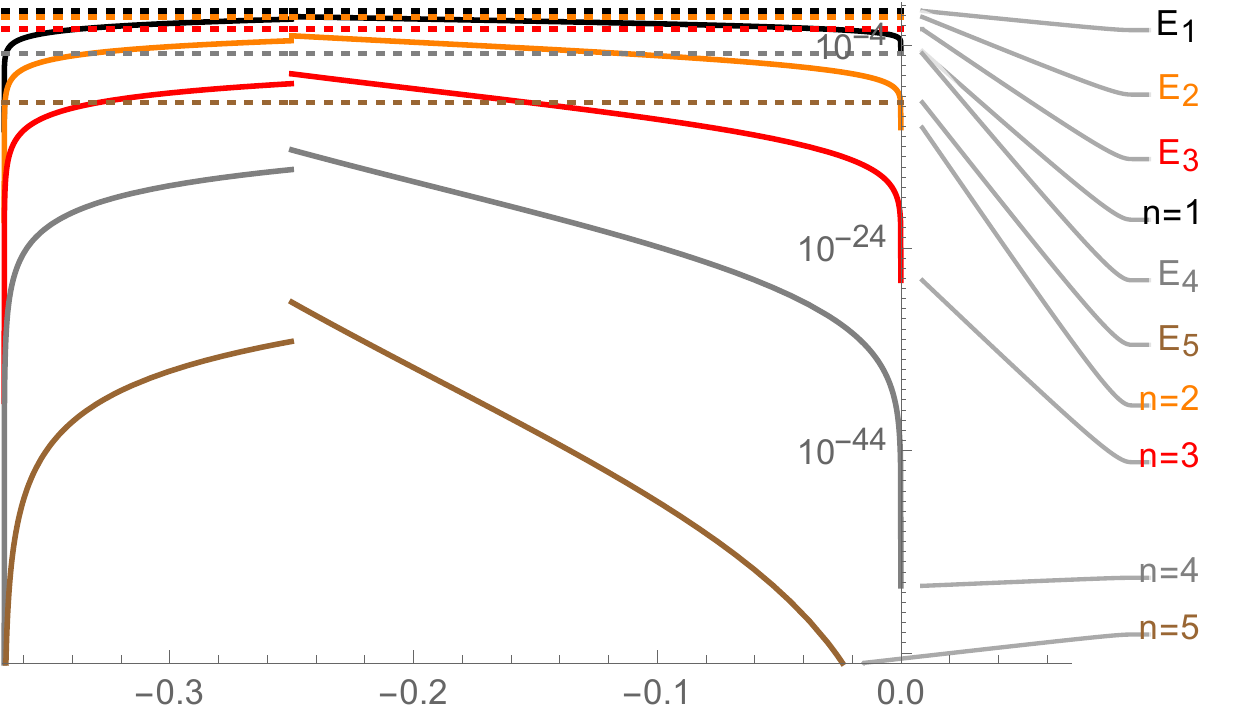}
  \caption{}
\end{subfigure}%
\begin{subfigure}{.5\textwidth}
  \centering
  \includegraphics[width=.9\linewidth]{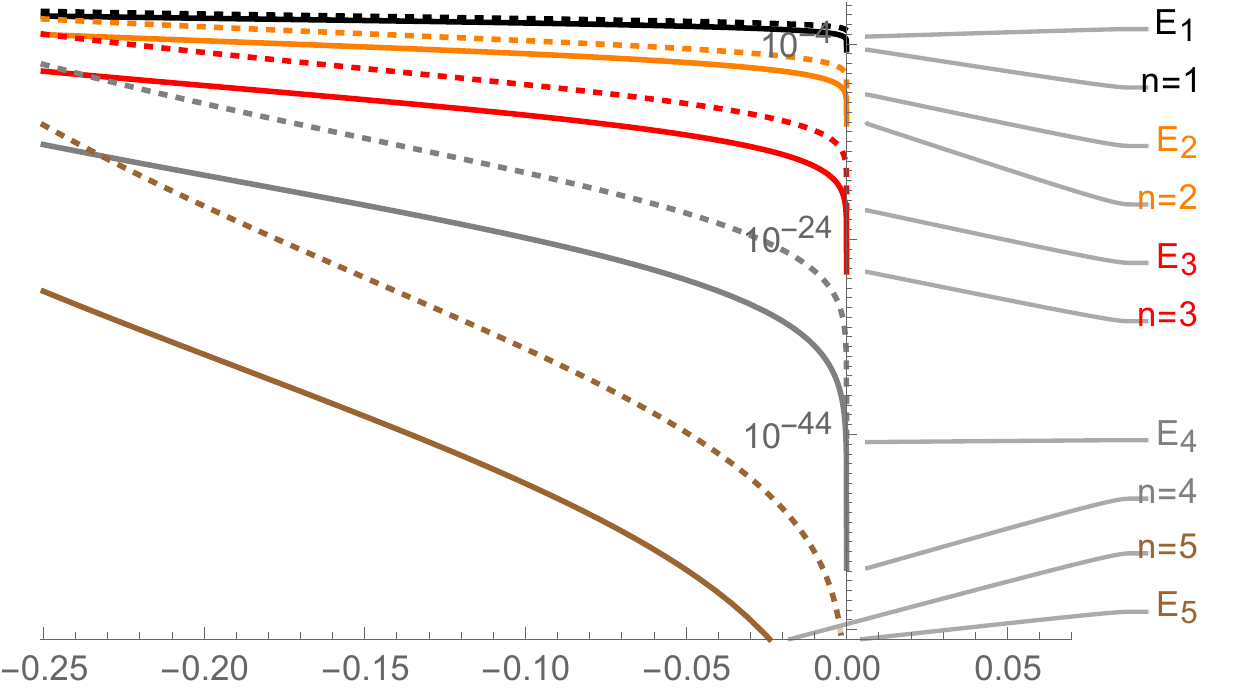}
  \caption{}
\end{subfigure}
\caption{Semi-log plots illustrating Theorem \ref{W-1thm}. The piecewise continuous curves in figure (a) (reflecting the different definitions for $x\in(-1/e,-1/4]$ and for $x\in(-1/4,0)$ in \eqref{W-1start}) correspond to the actual differences $\Wm(x)-\be_n(x)$ for various values of $n$, whereas the dotted lines $E_n$ depict the uniform estimates $1/2^{2^n}$. In figure (b), the dotted curves $E_n$ correspond to the right-hand side of the ``sharper estimate'' in the theorem for $x\in(-1/4,0)$.}
\label{fig4region}
\end{figure}

\section{An application: the non-trivial positive real solutions of $x^y=y^x$}\label{sectionxyyx}

In this section, the function $y: (1,+\infty)\to (1,+\infty)$ denotes the unique smooth
solution to the implicit equation 
\begin{equation}\label{ydef}
x^{y(x)}=(y(x))^x
\end{equation} with $y(x)\ne x$ for $x\ne e$
 (and then we necessarily have $y(e)=e$).
There are several papers in the literature on the solutions to the commutative
equation of exponentiation $x^y=y^x$, first considered by D.~Bernoulli, C.~Goldbach, and L.~Euler; see the survey \cite{loczi}.
In connection with the function $y$ in \eqref{ydef}, the following question and conjecture were posed in \cite{gofen} based on numerical observations.
\begin{quest}\label{question21}
Does the function $x\mapsto x^{y(x)}$ have an asymptote at $+\infty$?
\end{quest}
As for the function $y$ itself, Figure \ref{fig1} suggests that it can be well approximated by the unique hyperbola with
vertical asymptote $x=1$, horizontal asymptote $y=1$ and slope of the tangent at $x=e$ equal to $-1$.
\begin{figure}
\begin{center}
\includegraphics[width=0.4\textwidth]{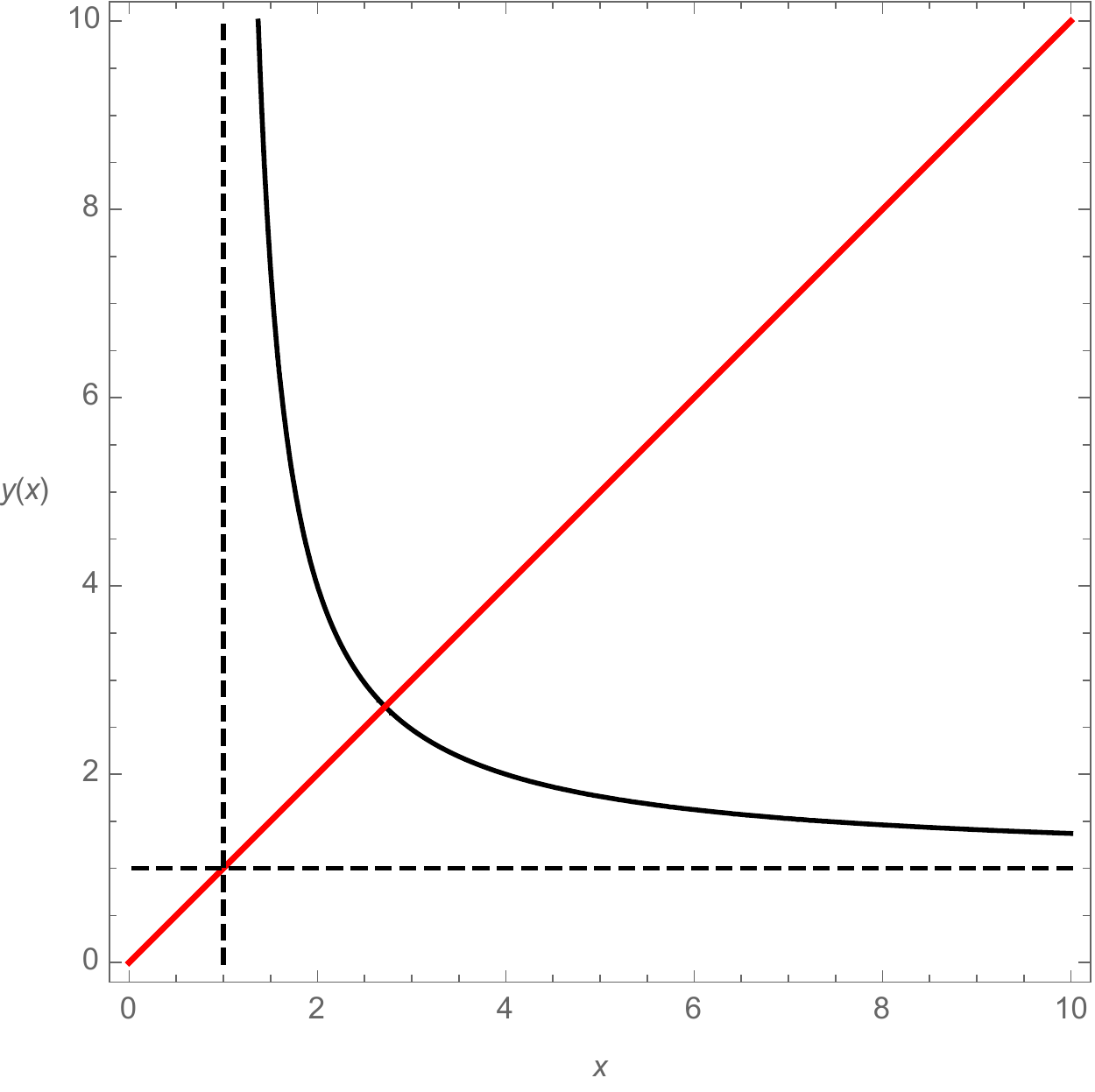}
\caption{The continuous black curve shows the non-trivial positive solution curve of
the equation $x^y=y^x$, while the red line represents the trivial solution set $y=x$.
\label{fig1}}
\end{center}
\end{figure}
\begin{conj}\label{conjecture21}
Prove that for any $x\in(1,+\infty)\setminus\{e\}$, we have 
\[
y(x)>1+\frac{(e-1)^2}{x-1}.
\]
\end{conj}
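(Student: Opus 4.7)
The plan is to parametrize the non-trivial solution curve of $x^y = y^x$ and reduce the conjecture to a one-variable inequality. Setting $t := y/x$, every non-trivial pair corresponds to a unique $t \in (0, +\infty) \setminus \{1\}$ via $x(t) = t^{1/(t-1)}$ and $y(t) = t \cdot x(t)$, with $(x(1), y(1)) = (e, e)$ by continuity. The involution $t \mapsto 1/t$ swaps $x$ and $y$, so by the symmetry of the claim it suffices to prove $h(t) := (x(t)-1)(y(t)-1) > (e - 1)^2$ for $t > 1$. Here $h(t) = t\, x(t)^2 - (1+t) x(t) + 1$, with $h(1^+) = (e-1)^2$ and $h(t) \to +\infty$ as $t \to +\infty$. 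Expanding $x(t)$ near $t = 1$ via $\ln x(t) = \ln t / (t-1) = 1 - (t-1)/2 + (t-1)^2/3 - \cdots$ yields $h(t) = (e-1)^2 + \frac{e(2e-5)}{12}(t-1)^2 + O((t-1)^3)$, which gives a strict local minimum at $t = 1$ since $2e > 5$.

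To finish, I would show $h$ is strictly increasing on $(1, +\infty)$. Differentiating $h$ using $x'(t) = x(t)(t-1-t\ln t)/[t(t-1)^2]$, the inequality $h'(t) > 0$ becomes
\[
\Xi(t) := t^2(x(t)-1)(t-1-\ln t) - (y(t)-1)(t\ln t - t + 1) > 0 \qquad (t > 1).
\]
A cleaner reformulation arises via the substitution $t = e^{2d}$ with $d > 0$: one checks $\sqrt{xy} = e^{d\coth d}$ and $x + y = 2 e^{d\coth d} \cosh d$, so that (writing $J(d) := xy - (x+y) = (x-1)(y-1) - 1$) the target becomes $J(d) > e(e-2)$ with $J(0^+) = e(e-2)$, and a direct computation gives
\[
J'(d) = \frac{2 e^{d\coth d}}{\sinh^2 d}\, Q(d), \qquad Q(d) := (e^{d\coth d} - \cosh d)(\sinh d \cosh d - d) - \sinh^3 d.
\]
Thus the monotonicity of $h$ amounts to $Q(d) > 0$ for all $d > 0$. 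One verifies $Q(d) = \frac{2e-5}{3}\, d^3 + O(d^5)$ near $d = 0$, and using the expansion $e^{d\coth d} = e^d + 2 d e^{-d} + O(e^{-3d})$ one obtains $Q(d) \sim e^d/4$ as $d \to +\infty$, so $Q$ is positive near both endpoints. The auxiliary inequality $X(d) := e^{d\coth d} > e^d$ (equivalent to $\coth d > 1$) gives the useful identity $X(d) - \cosh d = \sinh d + e^d(e^{\delta(d)} - 1)$ with $\delta(d) := 2d/(e^{2d}-1) \in (0,1)$, which controls the sign of the first factor of $Q$.

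The hard part is closing out the intermediate range of $d$: verifying $Q(d) > 0$ symbolically on all of $(0, +\infty)$. Since $Q$ mixes the nested exponential $e^{d\coth d}$ with $\sinh d$, $\cosh d$, and $d$, a single monotonicity argument is unlikely to suffice. Instead, following the paper's established pattern of reducing transcendental inequalities to polynomial ones by repeated differentiation and substitution, I would apply $u = e^{2d}$ to recast $Q$ in terms of $u$, $\ln u$, and $u^{1/(u-1)}$, and then differentiate enough times to peel off the transcendental factors until only a (multivariable) polynomial inequality in $u$ and $\ln u$ remains, whose sign can be checked by elementary calculus. This final symbolic reduction is the main technical obstacle of the plan.
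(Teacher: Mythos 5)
Your reduction is clean and checks out up to the final inequality, but that inequality is exactly where the proof stops. The parametrization $t = y/x$ with $x(t) = t^{1/(t-1)}$, the equivalent restatement $(x-1)(y-1) > (e-1)^2$, the involution $t \mapsto 1/t$ reducing to $t > 1$, the Taylor coefficient $\frac{e(2e-5)}{12}$, and the computation $J'(d) = \frac{2e^{d\coth d}}{\sinh^2 d}\,Q(d)$ are all correct. But you never actually establish $Q(d)>0$ on $(0,+\infty)$: you only compute the sign of the leading terms at $d\to 0^+$ and $d\to+\infty$, and then sketch a substitution-and-differentiation plan that you yourself flag as the ``main technical obstacle.'' Since $Q$ contains the nested transcendental $e^{d\coth d}$ (equivalently $u^{1/(u-1)}$ after $u=e^{2d}$), differentiation does not eliminate that factor but reproduces it, so the plan as stated is not self-contained; closing it would almost certainly require explicit two-sided rational or series bounds on $u^{1/(u-1)}$ in the spirit of Lemma~\ref{logy/yserieslemma}, which you do not provide.

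For contrast, the paper takes a different route that does close the gap. Working on $x>e$ by the same symmetry, it uses the representation $y(x) = -\frac{x}{\ln x}\,\Wn\!\left(-\frac{\ln x}{x}\right)$ and the monotone equivalence $\alpha<\beta \Leftrightarrow \alpha e^\alpha < \beta e^\beta$ on $[-1,+\infty)$ (after checking $-1<\alpha$ and, via Lemma~\ref{betalemma}, $-1<\beta$) to eliminate $\Wn$ entirely and arrive at the one-variable inequality of Lemma~\ref{lemma25}, which involves only $x$, $\ln x$, and rational functions of $x$. Two differentiations then strip the logarithms and reduce the sign analysis to locating real roots of explicit polynomials, an elementary finish. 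The structural advantage is that the transcendental obstruction that survives in your $Q(d)$ is removed at the outset by the monotone equivalence rather than carried along into the calculus; if you want to complete your version, you will need to supply the analogue of that elimination step or supply explicit bounds that let you replace $e^{d\coth d}$ by polynomials before differentiating.
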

In this section we show that the answer to Question \ref{question21} is negative, but Conjecture \ref{conjecture21} is true.

Due to symmetry, it is enough
to verify Conjecture \ref{conjecture21} for $x>e$, 
and
we can clearly restrict $y$ to the same interval to answer Question \ref{question21}.
It is known \cite{loczi} that for $x>e$ 
\begin{equation}\label{yrep}
y(x)=-\frac{x}{\ln (x)}\cdot \Wn\left(-\frac{\ln (x)}{x}\right). 
\end{equation}

\begin{rem}
It can be checked that the branch \eqref{yrep} for $x>e$ can also be written as
$y(x)=\exp\left(-\Wn\left(-\frac{\ln (x)}{x}\right)\right)$. As for the branch defined for $1<x<e$, it can be represented by $\Wm$ as $y(x)=\exp\left(-\Wm\left(-\frac{\ln (x)}{x}\right)\right)$.
\end{rem}

Let us present a simple lemma first, derived from \eqref{W00series}. Its proof is found in Section \ref{section33}. 
\begin{lem}\label{smallWestimate}
For any $x\in(-1/4,0)$, we have
\[
x-4 x^2<\frac{1}{2} \left(\sqrt{1+4 x}-1\right)<\Wn(x)<\sqrt{1+2 x}-1<x-\frac{x^2}{2}<x.
\]
\end{lem}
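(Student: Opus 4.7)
The chain contains five strict inequalities: three are purely algebraic (no $\Wn$) and two are the substantive bounds on $\Wn$. The plan is to dispatch the algebraic ones by squaring and factoring, and then convert the two $\Wn$-inequalities into statements about the map $z\mapsto ze^z$ via the monotonicity equivalence \eqref{monotoneequivalence}, which is directly applicable here because on $(-1/4,0)$ we have $\Wn(x)\in(-1,0)\subset[-1,+\infty)$ and both comparison values will also land in $[-1,+\infty)$.

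For the algebraic steps, $x-x^2/2<x$ is trivial. To obtain $\sqrt{1+2x}-1<x-x^2/2$, I would check that on $(-1/4,0)$ both $\sqrt{1+2x}$ and $1+x-x^2/2$ are positive (the latter bounded below by $3/4-1/32$), so squaring is legitimate; after cancellation this reduces to $x^3(x/4-1)>0$, valid since both factors are negative on our interval. Similarly, $x-4x^2<\tfrac12(\sqrt{1+4x}-1)$ rearranges to $1+2x-8x^2<\sqrt{1+4x}$ with both sides positive on $(-1/4,0)$; squaring and collecting terms produces $16x^2-8x-3<0$, whose roots are exactly $-1/4$ and $3/4$, so the strict inequality holds throughout $(-1/4,0)$.

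For the lower bound on $\Wn$, set $y:=\tfrac12(\sqrt{1+4x}-1)\in(-1/2,0)$; squaring gives the identity $x=y(1+y)$. By \eqref{monotoneequivalence}, $y<\Wn(x)$ is equivalent to $ye^y<x=y(1+y)$, and dividing by $y<0$ flips this to $e^y>1+y$, the classical strict exponential inequality valid for all $y\neq 0$. For the upper bound $\Wn(x)<\sqrt{1+2x}-1$, set $z:=\sqrt{1+2x}-1\in(-1,0)$; squaring yields $x=z(z+2)/2$, and by \eqref{monotoneequivalence} the desired bound is equivalent to $x<ze^z$, which after dividing by $z<0$ becomes $e^z<1+z/2$.

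The verification of this last exponential inequality on $z\in(-1,0)$ is the only step requiring a brief analytic argument, and is the main (modest) obstacle. I would proceed by setting $g(z):=1+z/2-e^z$, noting $g(0)=0$ and $g(-1)=1/2-1/e>0$, and observing that $g'(z)=1/2-e^z$ vanishes only at $z=-\ln 2\in(-1,0)$; hence $g$ increases on $(-1,-\ln 2)$ from a positive value, then decreases on $(-\ln 2,0)$ to $0$, so $g>0$ throughout $(-1,0)$, giving $e^z<1+z/2$ as needed. Strictness in every link of the chain is preserved because $x\neq 0$ on the given interval.
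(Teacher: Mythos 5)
Your proof is correct and follows essentially the same route as the paper's: the key step for the lower bound on $\Wn$ is identical (substitute $y=\tfrac12(\sqrt{1+4x}-1)\in(-1/2,0)$, use \eqref{monotoneequivalence}, and reduce to $e^y>1+y$), and your treatment of the upper bound is exactly the ``analogous'' argument the paper alludes to without writing out. One point worth noting: the paper simply declares the third inequality analogous to the second, but in fact the substitution $z=\sqrt{1+2x}-1$, $x=z(z+2)/2$ reduces it to $e^z<1+z/2$ on $(-1,0)$, which unlike $e^y>1+y$ is not a global inequality and genuinely needs the short calculus argument you supplied (concavity/monotonicity of $z\mapsto 1+z/2-e^z$ with endpoint values $g(-1)>0$, $g(0)=0$); you correctly identified and closed that gap.
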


Now if there is an asymptote to $x\mapsto x^{y(x)}$ at $+\infty$ of the form $x\mapsto a x+b$
with some $a, b\in\mathbb{R}$, then
\[
a=\lim_{x\to +\infty}\frac{x^{y(x)}}{x}\quad\text{ and }\quad
b=\lim_{x\to +\infty}(x^{y(x)}-a x).
\] 
Due to \eqref{yrep} we have
${x^{y(x)}}/{x}=\exp\left({-x\, \Wn\left(-\frac{\ln (x)}{x}\right)}-\ln(x)\right)$, so 
$a=e^0=1$, because, by Lemma \ref{smallWestimate}, for sufficiently large $x$ (e.g., $x>9$ works)  
\[
0=-x\left(-\frac{\ln (x)}{x}\right)-\ln (x)<{-x\, \Wn\left(-\frac{\ln (x)}{x}\right)}-\ln(x)<
\]
\[
-x \left(-\frac{\ln (x)}{x}-4 \left(-\frac{\ln (x)}{x}\right)^2\right)-\ln (x)=
\frac{4 \ln ^2(x)}{x}.
\]
As for the quantity $b$, we use Lemma \ref{smallWestimate} again to get
\[
x^{y(x)}-x=\exp\left({-x\, \Wn\left(-\frac{\ln (x)}{x}\right)}\right)-x>
\]
\[
\exp \left(-x \left(-\frac{\ln (x)}{x}-\frac{1}{2} \left(-\frac{\ln (x)}{x}\right)^2\right)\right)-x=
\]
\[
x \left(\exp\left({\frac{\ln^2(x)}{2 x}}\right)-1\right)>
x \left(1+{\frac{\ln^2(x)}{2 x}}-1\right)=\frac{\ln ^2(x)}{2}.
\]
This estimate shows that $b$ cannot be finite, hence  there is no asymptote 
to the function $x\mapsto x^{y(x)}$ at $+\infty$.
\begin{rem} A slightly stronger statement is
\[
\lim_{x\to +\infty} \frac{\exp\left({-x\, \Wn\left(-\frac{\ln (x)}{x}\right)}\right)-x}{\ln^2(x)}=1,
\]
which we present here without proof. 
\end{rem}
Now we turn to Conjecture \ref{conjecture21}. By using \eqref{yrep} again we see that 
the claim is equivalent
to 
\[
\alpha(x):=\Wn\left(-\frac{\ln (x)}{x}\right)<-\frac{\ln (x)}{x}\left(1+\frac{(e-1)^2}{x-1}\right)=:\beta(x)
\]
for $x>e$. 
It is easy to check that $-1<\al(x)$ for $x>e$. The next lemma (to be proved in Section \ref{section31}) shows similarly that $-1<\beta$ on $(e,+\infty)$. 
\begin{lem}\label{betalemma}
For any $x>e$, one has $-1<\beta(x)$.
\end{lem}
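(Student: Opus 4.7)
The statement $\beta(x)>-1$ is equivalent, after clearing the negative factor $-\ln(x)/x<0$ and multiplying by $x-1>0$, to the inequality
\[
g(x):=x(x-1)-\ln(x)\bigl(x-1+(e-1)^2\bigr)>0
\]
for every $x>e$. My plan is to prove this via a standard ``tangency at $x=e$, convexity afterwards'' argument.

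First I would observe the crucial algebraic identity $e-1+(e-1)^2=(e-1)\cdot e$, which yields $g(e)=e(e-1)-1\cdot(e-1)e=0$. This identity is precisely why the conjectured hyperbola is tangent to the solution curve at $x=e$, and it is what makes the whole argument work out cleanly. Having established $g(e)=0$, it then suffices to show that $g$ is strictly increasing on $[e,+\infty)$.

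Next I would compute
\[
g'(x)=2x-2+\frac{1-(e-1)^2}{x}-\ln(x),
\]
and evaluate at $x=e$, using $\frac{1-(e-1)^2}{e}=\frac{-e(e-2)}{e}=2-e$, to obtain $g'(e)=2e-3+(2-e)-0=e-1>0$ (wait, one has to include the $-\ln(e)=-1$ term: $g'(e)=2e-2+(2-e)-1=e-1>0$). Then I would differentiate once more:
\[
g''(x)=2-\frac{1}{x}+\frac{(e-1)^2-1}{x^2}.
\]
For $x>e>1$ the term $2-1/x>2-1/e>0$, and $(e-1)^2-1>0$, so $g''(x)>0$ on $[e,+\infty)$. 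Combining this with $g'(e)>0$ gives $g'(x)>0$ for $x\ge e$, hence $g$ is strictly increasing on $[e,+\infty)$; together with $g(e)=0$ this yields $g(x)>0$ for $x>e$, completing the proof.

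I do not expect a genuine obstacle here: the argument is purely calculus in one variable, and the only mildly delicate point is keeping track of the sign of $1-(e-1)^2$ (which is negative) when evaluating $g'(e)$. The ``magic'' of the problem is already consumed in noticing the factorization $e-1+(e-1)^2=e(e-1)$ that forces $g(e)=0$; after that, strict convexity of $g$ on a neighborhood of $[e,\infty)$ is immediate, and monotonicity does the rest.
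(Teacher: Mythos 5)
Your proof is correct, and it takes a genuinely different route from the paper's. Both start the same way, clearing denominators to reduce the claim to a tangency-at-$e$-and-monotonicity statement, but the papers work with the rational function $f(x)=\frac{x(x-1)}{x+e^2-2e}-\ln(x)$ (noting $x+e^2-2e=x-1+(e-1)^2$), shows $f(e)=0$, and then proves $f'>0$ on $(e,\infty)$ by writing $f'(x)=N(x)/\bigl(x(x+e^2-2e)^2\bigr)$ for an explicit cubic $N$ and locating all three real roots of $N$ in $(-\infty,2)$. You instead keep the polynomial form $g(x)=x(x-1)-\ln(x)\bigl(x-1+(e-1)^2\bigr)=(x+e^2-2e)f(x)$, verify $g(e)=0$ and $g'(e)=e-1>0$, and then observe $g''(x)=2-\tfrac{1}{x}+\tfrac{(e-1)^2-1}{x^2}>0$ on $[e,\infty)$ termwise, which forces $g'>0$ and hence $g>0$ there. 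Your argument trades root-counting for a cubic polynomial against one extra differentiation and an obviously positive $g''$; it is shorter and avoids having to justify where the cubic's roots lie, at no loss of rigor. The computations you show (the factorization $e-1+(e-1)^2=e(e-1)$, $g'(e)=e-1$, the sign of $g''$) all check out.
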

Now, by using \eqref{monotoneequivalence},
$\alpha(x)<\beta(x)$ for $x>e$ is equivalent to  
\[
-\frac{\ln (x)}{x}<-\left(1+\frac{(e-1)^2}{x-1}\right) \frac{\ln (x)}{x} \,\exp \left(-\left(1+\frac{(e-1)^2}{x-1}\right) \frac{\ln (x)}{x}\right).
\]
It is convenient to rewrite the above inequality into the following form.
\begin{lem}\label{lemma25}
For any $x>e$
\begin{equation}\label{lemma25LHS}
\frac{\left(x+e^2-2 e\right) }{(x-1)}\cdot \frac{\ln(x)}{x}+\ln \left(1-\frac{(e-1)^2}{x+e^2-2 e}\right)>0.
\end{equation}
\end{lem}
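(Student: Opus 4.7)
Let $g$ denote the left-hand side of \eqref{lemma25LHS}. The key algebraic observation is that with $r(x) := (x+e^2-2e)/(x-1) = 1 + (e-1)^2/(x-1)$, one has the identity $1 - (e-1)^2/(x+e^2-2e) = 1/r(x)$, allowing the rewriting
\[
g(x) = r(x)\cdot\frac{\ln x}{x} - \ln r(x) = r(x)\bigl[\phi(x) - \phi(r(x))\bigr],
\]
where $\phi(t) := (\ln t)/t$. Since $r(x) > 0$ on $(1,+\infty)$, it therefore suffices to prove $\phi(x) > \phi(r(x))$ for every $x > e$; and the boundary case $r(e) = e$ confirms $g(e) = 0$ with both sides equal to $1/e$.

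The function $r$ is strictly decreasing on $(1,+\infty)$ with $r(e) = e$ and $r(x) \to 1^+$ as $x \to +\infty$, so $r(x) \in (1, e)$ for every $x > e$. Because $\phi$ has its unique maximum $1/e$ at $t=e$, both $\phi(x)$ and $\phi(r(x))$ approach $1/e$ as $x \to e^+$ and the inequality is tight at $x=e$. To exploit the apparent symmetry across $e$, I would introduce the substitution $x = 1 + (e-1)e^w$ with $w \ge 0$. A short computation then gives $r(x) = 1 + (e-1)e^{-w}$, so the involution $w \mapsto -w$ swaps $x$ and $r(x)$. Defining
\[
\Psi(w) := \phi\bigl(1+(e-1)e^w\bigr) - \phi\bigl(1+(e-1)e^{-w}\bigr),
\]
the claim becomes $\Psi(w) > 0$ for all $w > 0$, and $\Psi$ is by construction an odd function of $w$.

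For the local analysis at $w = 0$, using $\phi'(e) = 0$, $\phi''(e) = -e^{-3}$, and $\phi'''(e) = 5 e^{-4}$, one obtains
\[
\Psi(w) = \frac{(e-1)^2 (2e-5)}{3\, e^4}\, w^3 + O(w^5),
\]
with strictly positive leading coefficient since $2e - 5 > 0$; hence $\Psi > 0$ on some initial interval $(0, w_0)$. For large $w$, the asymptotics $\phi(1+(e-1)e^w) \sim w\,e^{-w}/(e-1)$ and $\phi(1+(e-1)e^{-w}) \sim (e-1)\,e^{-w}$ yield $\Psi(w) \sim e^{-w}\bigl[w/(e-1) - (e-1)\bigr]$, which is positive whenever $w > (e-1)^2$.

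The principal obstacle is closing the intermediate range $w \in [w_0,\, (e-1)^2]$, where neither the cubic Taylor expansion nor the leading asymptotic bound is conclusive. My plan for this range is to multiply $\Psi(w)$ by the positive factor $(1+(e-1)e^w)(1+(e-1)e^{-w})$ to clear denominators, then apply the Pad\'e-type bounds $z - z^2/2 \le \ln(1+z) \le z - z^2/2 + z^3/3$ (valid for $z \ge 0$) to both logarithmic terms; this reduces positivity of $\Psi$ to a polynomial inequality in $e^{\pm w}$, which I would then verify by repeated differentiation, tracking the signs of the resulting rational coefficients until reaching a manifestly positive polynomial expression on the compact interval $[w_0,\, (e-1)^2]$, in the same spirit as the other transcendental reductions carried out in the paper's Appendix.
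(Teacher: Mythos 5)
Your algebraic reduction is correct and genuinely different from the paper's route: with $r(x)=(x+e^2-2e)/(x-1)$ the identity $1-(e-1)^2/(x+e^2-2e)=1/r(x)$ does hold, and writing the left-hand side as $r(x)\bigl[\phi(x)-\phi(r(x))\bigr]$ with $\phi(t)=(\ln t)/t$ cleanly reduces the claim to the comparison $\phi(x)>\phi(r(x))$, and the substitution $x=1+(e-1)e^w$ turns $r$ into the reflection $w\mapsto -w$. This is a more structural idea than the paper's, which works directly with the derivative of $f_{\ref{section32}}$: the paper shows $f_{\ref{section32}}(e)=0=\lim_\infty f_{\ref{section32}}$ and then proves unimodality by tracking the sign of a numerator polynomial $P_{\ref{section32}4}$ through repeated differentiation and root counting.

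However, as it stands your argument is not a proof, for three reasons of increasing severity. First, the Taylor computation at $w=0$ gives $\Psi(w)=\tfrac{(e-1)^2(2e-5)}{3e^4}w^3+O(w^5)$ (which I verified), but this only yields positivity on an \emph{unspecified} interval $(0,w_0)$ unless the remainder is explicitly bounded. Second, the ``large $w$'' step is a leading-order asymptotic, not an inequality: from $\Psi(w)\sim e^{-w}\bigl[w/(e-1)-(e-1)\bigr]$ one cannot conclude $\Psi(w)>0$ for all $w>(e-1)^2$ without controlling the error, and the threshold $(e-1)^2\approx 2.95$ is precisely where the leading term vanishes, i.e.\ where the error is most dangerous. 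Third, and most fundamentally, your plan for the intermediate range is unlikely to work as sketched: the bound $z-z^2/2\le\ln(1+z)\le z-z^2/2+z^3/3$ is useful only for $z$ of moderate size, whereas in $\ln\bigl(1+(e-1)e^w\bigr)$ the argument $z=(e-1)e^w$ already exceeds $30$ by $w\approx 2.9$, so the lower Pad\'e bound $z-z^2/2$ is large and negative there and gives no information. Closing the gap would require a bound that stays tight for large $z$ (e.g.\ comparing $\ln(1+z)$ with $\ln z + z^{-1}$ rather than a polynomial), or a different estimate altogether on $[w_0,(e-1)^2]$. Until that interval is handled by an actual inequality, the lemma remains unproved by this route.
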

The proof of this lemma---given in Section \ref{section32}---finishes the proof of Conjecture 
\ref{conjecture21}.


\appendix 

\section{Appendix: the proofs of the lemmas and theorems}\label{proofsection}

\subsection{The proof of Lemma \ref{lemmaWsimple}}\label{sectionWsimple}

The function $\Wn$ is strictly increasing on $[e,+\infty)$ and $\Wn(e)=1$, so by using \eqref{monotoneequivalence} we see that $\Wn(x)<L_1(x)$ for $x>e$ is equivalent to
$x= \Wn(x) e^{\Wn(x)}<\ln(x) e^{\ln(x)}=x \ln(x)$. Similarly, for the lower estimate,
\eqref{ineq1} says that $L_1(x)-L_2(x)>0$ for $x\ge e$, hence by \eqref{monotoneequivalence} 
again,
$L_1(x)-L_2(x)<\Wn(x)$ is equivalent to 
\[x\left(1-\frac{\ln(\ln(x))}{\ln(x)}\right)=(L_1(x)-L_2(x)) e^{L_1(x)-L_2(x)}<\Wn(x) e^{\Wn(x)}=x,\]
but $1-\frac{\ln(\ln(x))}{\ln(x)}<1$ for $x\in(e,+\infty)$, so the proof is complete.

\subsection{The proof of Lemma \ref{9007lemma}}\label{lemma9007lemmaproofsection}

First we present a two-sided estimate of the expression $y^{1/y}$.
\begin{lem}\label{logy/yserieslemma}
For any $1<y$, $2\le N\in\mathbb{N}$ and $1\le N^*\in\mathbb{N}$ we have
\[
\sum_{k=0}^{N^*}\frac{1}{k!}\left(\frac{\ln(y)}{y}\right)^k<y^{1/y}<
\left(\sum_{k=0}^{N-1}\frac{1}{k!}\left(\frac{\ln(y)}{y}\right)^k\right)+\frac{4/3}{N!}\left(\frac{\ln(y)}{y}\right)^N.
\]
\end{lem}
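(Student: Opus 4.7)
The plan is to reduce the claim to an estimate for the exponential series. Writing $t := \ln(y)/y$ for $y>1$, one has $y^{1/y} = e^t$, and the standard calculus computation (maximum of $\ln(y)/y$ at $y=e$) shows $t\in(0,1/e]$. So the inequalities become
\[
\sum_{k=0}^{N^*}\frac{t^k}{k!} \;<\; e^t \;<\; \sum_{k=0}^{N-1}\frac{t^k}{k!}+\frac{4/3}{N!}\,t^N
\]
for all $t\in(0,1/e]$ and the given ranges of $N^*,N$.

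The lower bound is immediate: the power series $e^t=\sum_{k=0}^\infty t^k/k!$ converges absolutely, and since $t>0$ every omitted term is strictly positive, so any partial sum underestimates $e^t$ strictly. This handles $N^*\ge 1$.

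For the upper bound, I would isolate the tail and factor out $t^N/N!$:
\[
e^t-\sum_{k=0}^{N-1}\frac{t^k}{k!}=\sum_{k=N}^\infty\frac{t^k}{k!}=\frac{t^N}{N!}\sum_{j=0}^\infty\frac{t^j}{(N+1)(N+2)\cdots(N+j)},
\]
with the convention that the empty product (for $j=0$) is $1$. The task reduces to showing the bracketed series is strictly less than $4/3$. Since $N\ge 2$, each factor $N+i\ge 3$, hence $(N+1)(N+2)\cdots(N+j)\ge 3^j$ for $j\ge 1$. Therefore
\[
\sum_{j=0}^\infty\frac{t^j}{(N+1)\cdots(N+j)}\le 1+\sum_{j=1}^\infty\left(\frac{t}{3}\right)^j=1+\frac{t/3}{1-t/3},
\]
which is an increasing function of $t$ on $(0,1/e]$, so its supremum is attained at $t=1/e$, giving $1+\frac{1}{3e-1}$. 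Finally, $1+\frac{1}{3e-1}<4/3$ reduces to $3<3e-1$, i.e.\ $e>4/3$, which is obvious. This yields the strict upper bound.

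No step looks to be a genuine obstacle; the only minor subtlety is the bookkeeping that $t\le 1/e$ and $N\ge 2$ enter together to beat the explicit constant $4/3$ (a cruder choice of $N$ or a careless bound on the denominators $(N+1)\cdots(N+j)$ would force a worse constant). For later use in Lemma~\ref{9007lemma}, it is worth noting that the approach also yields quantitative refinements: replacing the geometric bound by the tighter estimate $(N+1)(N+2)\cdots(N+j)\ge (N+1)^j$ gives a constant $\frac{N+1}{N+1-t}$ in place of $4/3$, which is strictly less than $4/3$ for every admissible $(N,t)$.
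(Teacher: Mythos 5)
Your proof is correct and follows essentially the same route as the paper: both identify $y^{1/y}=e^t$ with $t=\ln(y)/y\in(0,1/e]$, handle the lower bound by truncating the (positive-term) exponential series, and handle the upper bound by factoring $t^N/N!$ out of the tail and showing the remaining series is below $4/3$ using $t\le 1/e$ and $N\ge 2$. The only difference is cosmetic: the paper bounds the tail series by specializing to $N=2$, $t=1/e$ and evaluating it exactly as $2e(e^{1+1/e}-e-1)<4/3$, whereas you use the cruder geometric-series bound $1+\tfrac{t/3}{1-t/3}\le 1+\tfrac{1}{3e-1}<4/3$; both succeed, and your variant is marginally more elementary since it avoids the closed-form sum.
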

\begin{proof}
Since $y^{1/y}=\exp\left(\frac{\ln(y)}{y}\right)=\displaystyle\sum_{k=0}^{+\infty}\frac{1}{k!}\left(\frac{\ln(y)}{y}\right)^k$ and $y>1$, the lower estimate is verified. As for the upper estimate, one has
\[
y^{1/y}-\sum_{k=0}^{N-1}\frac{1}{k!}\left(\frac{\ln(y)}{y}\right)^k=\sum_{k=N}^{+\infty}\frac{1}{k!}\left(\frac{\ln(y)}{y}\right)^k=\frac{1}{N!}\left(\frac{\ln(y)}{y}\right)^N \ \sum_{k=0}^{+\infty}\frac{1}{(N+k)!/N!}\left(\frac{\ln(y)}{y}\right)^k,
\]
hence it is enough to show that the rightmost sum above is at most $4/3$. Now we take into account that $\ln(y)/{y}\in(0,1/e]$ for $y>1$, and $N\ge 2$, so 
\[
\sum_{k=0}^{+\infty}\frac{1}{(N+k)!/N!}\left(\frac{\ln(y)}{y}\right)^k\le \sum_{k=0}^{+\infty}\frac{1}{(2+k)!/2!}\left(\frac{1}{e}\right)^k=2e \left(e^{1+1/e}-e-1\right)<\frac{4}{3}.
\]
\end{proof}
Now we can start the actual proof of Lemma \ref{9007lemma}. Clearly $f_1(1)=0$. By using
Lemma \ref{1lemma}, Lemma \ref{logy/yserieslemma} with $N^*:=1$, and 
Lemma \ref{72lemmasimple}, for $1<y\le 7/2$ we have
\[
f_1(y)> \left(1+\frac{\ln (y)}{y}\right) \left(y^2-y \ln (y)+\ln(y)\right)-y^2=
\frac{\ln (y)}{y} (y+(1-y) \ln(y))>0.
\]
On the other hand, by Lemma \ref{logy/yserieslemma} with $N:=2$ we see for $y>1$ that
\[
f_1(y)<\left(1+\frac{\ln (y)}{y}+\frac{2 \ln ^2(y)}{3 y^2}\right) \left(y^2-y \ln (y)+\ln(y)\right)-y^2=
\]
\[
\frac{2 \ln^3(y)}{3 y^2}-\frac{2 \ln^3(y)}{3 y}+\frac{\ln^2(y)}{y}-\frac{1}{3} (\ln(y)-3) \ln(y),
\]
so $\displaystyle \lim_{+\infty} f_1=-\infty$. Therefore, the proof of Lemma \ref{9007lemma} will
be complete as soon as we have shown that $f'_1<0$ on $(7/2,+\infty)$. We have
\[
y^2 f'_1(y)=\left[2 y^3-2 y^2 \ln(y)+y(\ln^2(y)-\ln(y)+1)-\ln^2(y)+\ln(y)\right] y^{\frac{1}{y}}-2 y^3.
\]
By Lemma \ref{72lemma}, the expression in $[\ldots]$ is positive, so  
$y^{\frac{1}{y}}$ can be
estimated from above by 
Lemma \ref{logy/yserieslemma} with $N:=3$ and for any $y>7/2$ to get
$y^2 f'_1(y)<\frac{P_{\ref{lemma9007lemmaproofsection}}(y,\ln(y))}{18y^3}$,
where
\[
P_{\ref{lemma9007lemmaproofsection}}(y,z):=(4 y-4) z^5+\left(y^2-13 y+4\right) z^4+\left(8 y^3-27 y^2+13y\right) z^3+
\]
\[
\left(-36 y^3+27 y^2\right) z^2+\left(-18 y^4+36 y^3\right) z+18 y^4.
\]
We know that $y>7/2$, so $\ln(y)>5/4$,  and $\ln (y)<\frac{5 }{6} \sqrt{y}$ by Lemma \ref{56lemma}.
We finish the proof of  Lemma \ref{9007lemma} by showing that
$P_{\ref{lemma9007lemmaproofsection}}(y,z)<0$ for  $(y,z)\in {\mathcal{S}}_{\ref{lemma9007lemmaproofsection}}$, 
where
${\mathcal{S}}_{\ref{lemma9007lemmaproofsection}}:=\left\{ (y,z)\in\mathbb{R}^2 : y>\frac{7}{2} \text{  and  } \frac{5}{4}<z
<\frac{5}{6}\sqrt{y}  \right\}$. Notice that we also have
\[
{\mathcal{S}}_{\ref{lemma9007lemmaproofsection}}=\left\{(y,z)\in\mathbb{R}^2 :   \frac{5}{4}<z\leq \frac{5}{6} \sqrt{\frac{7}{2}} \text{ and } y>\frac{7}{2}\right\}\cup\]
\[
 \left\{(y,z)\in\mathbb{R}^2 :  z>\frac{5}{6} \sqrt{\frac{7}{2}} \text{ and  } y>\frac{36}{25}z^2\right\}.
\]
We use repeated (partial) differentiation to decrease the degree of $P_{\ref{lemma9007lemmaproofsection}}$. It is elementary to see that 
\[
\partial_1 (\partial_1 \partial_1 \partial_2 P_{\ref{lemma9007lemmaproofsection}})(y,z)=-432 y+144 z^2-432 z+216
\]
is negative for $(y,z)\in {\mathcal{S}}_{\ref{lemma9007lemmaproofsection}}$. Moreover, 
\[
\partial_1 \partial_1 \partial_2 P_{\ref{lemma9007lemmaproofsection}}(7/2,z)=8 z^3+342 z^2-1404 z-1890<0 
\]
for $\frac{5}{4}<z\leq \frac{5}{6} \sqrt{\frac{7}{2}}$ and $\partial_1 \partial_1 \partial_2 P_{\ref{lemma9007lemmaproofsection}}(36z^2/25,z)= $
\[
-\frac{2}{625} z \left(75168 z^3+191900 z^2-46575 z-33750\right)<0 
\]
for $z>\frac{5}{6} \sqrt{\frac{7}{2}}$, therefore $\partial_1 (\partial_1 \partial_2 P_{\ref{lemma9007lemmaproofsection}})(y,z)<0$ for 
$(y,z)\in {\mathcal{S}}_{\ref{lemma9007lemmaproofsection}}$. But 
\[
\partial_1 \partial_2 P_{\ref{lemma9007lemmaproofsection}}(7/2,z)=20 z^4-24 z^3+354 z^2-2268 z-1764<0,
\]
for $\frac{5}{4}<z\leq \frac{5}{6} \sqrt{\frac{7}{2}}$, and $\partial_1 \partial_2 P_{\ref{lemma9007lemmaproofsection}}(36z^2/25,z)=$
\[
\frac{z^2}{15625} \left(-1026432 z^4-6818400 z^3+166700 z^2+1617500 z+609375\right)<0
\]
for $z>\frac{5}{6} \sqrt{\frac{7}{2}}$, so $\partial_1 (\partial_2 P_{\ref{lemma9007lemmaproofsection}})(y,z)<0$ for 
$(y,z)\in {\mathcal{S}}_{\ref{lemma9007lemmaproofsection}}$. Analogously, 
\[
\partial_2 P_{\ref{lemma9007lemmaproofsection}}(7/2,z)=50 z^4-117 z^3+\frac{693 z^2}{4}-\frac{4851 z}{2}-\frac{9261}{8}<0
\]
for $\frac{5}{4}<z\leq \frac{5}{6} \sqrt{\frac{7}{2}}$, and (repeated differentiation with respect to $z$ shows that) 
\[
\partial_2 P_{\ref{lemma9007lemmaproofsection}}(36z^2/25,z)=-\frac{8 z^3 }{390625}\times
\]
\[
 \left(279936 z^5+10092600 z^4+1546200 z^3-1811250 z^2-1765625 z-781250\right)<0
\]
for $z>\frac{5}{6} \sqrt{\frac{7}{2}}$, hence $\partial_2 P_{\ref{lemma9007lemmaproofsection}}(y,z)<0$ for 
$(y,z)\in {\mathcal{S}}_{\ref{lemma9007lemmaproofsection}}$. But
\[
P_{\ref{lemma9007lemmaproofsection}}(y,5/4)=\frac{1}{256} \left(-1152 y^4+1120 y^3-2075 y^2+1500 y-625\right)<0
\]
for $y>7/2$, so $P_{\ref{lemma9007lemmaproofsection}}(y,z)<0$ for $(y,z)\in {\mathcal{S}}_{\ref{lemma9007lemmaproofsection}}$. 
The proof of Lemma \ref{9007lemma} is complete. 

\begin{rem} The above simple proof of the negativity of the polynomial $P_{\ref{lemma9007lemmaproofsection}}$ would
break down if the constant $5/6$ in Lemma \ref{56lemma} were replaced by, say, $1$.
\end{rem}

\subsection{The proof of Theorem \ref{maintheorem}}\label{section34}

We know that 
$
L_1(e)-L_2(e)+\frac{L_2(e)}{L_1(e)}=\Wn(e)=1,
$
and on the interval $(e,+\infty)$  both  $\Wn>1$ and $L_1-L_2+\frac{L_2}{L_1}>0$ hold (by \eqref{ineq1} in Lemma \ref{elementarylemma}). Hence, by using \eqref{monotoneequivalence} we have for any $x>e$ that
\[
L_1(x)-L_2(x)+\frac{L_2(x)}{L_1(x)}\ \boxed{ \lesseqqgtr}\ \Wn(x)
\]
is equivalent to
\[
\left(L_1(x)-L_2(x)+\frac{L_2(x)}{L_1(x)}\right) \exp\left(L_1(x)-L_2(x)+\frac{L_2(x)}{L_1(x)}\right) \ \boxed{ \lesseqqgtr}\ 
\]
\[
\Wn(x) \exp(\Wn(x))=x,
\]
that is, to
\[
\frac{x}{{\ln^2(x)}} \left(y^{1/y} \left(y^2-y \ln (y)+\ln(y)\right)-y^2\right) \ \boxed{ \lesseqqgtr}\ 0
\]
with $y:=\ln(x)$, where $\boxed{ \lesseqqgtr}$ stands for either ``$<$'', or ``$>$'', or ``$=$''. Lemma \ref{9007lemma} then proves the statement in the first sentence of Theorem
\ref{maintheorem}. 

Now by using the preliminary lower bound 
$L_1-L_2+\frac{L_2}{L_1}<\Wn$ 
we have just obtained on $(x^*,+\infty)$, we prove the upper bound \eqref{maintheoremupperestimate}. To this end, we notice that the function $-L_1$  in the identity \eqref{functional} is strictly decreasing, hence
\[
\Wn=L_1-L_1\circ\Wn<L_1-L_1\circ\left(L_1-L_2+\frac{L_2}{L_1}\right)
\]
holds on $(x^*,+\infty)$. By increasing by the right-hand side, see Lemma \ref{thm23auxupperest}, the proof of \eqref{maintheoremupperestimate} is complete.

To finish the proof of Theorem \ref{maintheorem}, we now verify the lower estimate \eqref{lowboundimproved}. Clearly, the inequality $\boxed{<}$ in \eqref{lowboundimproved} is equivalent to $0<(\ln (x)-3)\ln(\ln (x)) -2 \ln (x)$, and this inequality is true on the interval $(x^{**},+\infty)$ due to  Lemma \ref{13.26lemma}. (The proof of Lemma \ref{13.26lemma} is a simple convexity argument: one has $f_2(1)<0$, $\displaystyle \lim_{+\infty}f_2=+\infty$, and $f_2''>0$ on $[1,+\infty)$.)

Finally, we prove the inequality $<$ in \eqref{lowboundimproved} by using the same idea as in the proof of \eqref{maintheoremupperestimate}. The function $-L_1$  in the identity \eqref{functional} is strictly decreasing, so due to \eqref{maintheoremupperestimate} itself, we have
\[
\Wn=L_1-L_1\circ\Wn>L_1-L_1\circ\left(L_1-L_2+\frac{L_2}{L_1}+\frac{(L_2-2) L_2}{2 L_1^2}+
\frac{L_2^3}{L_1^3}\right).
\]
By decreasing by the right-hand side, see Lemma \ref{thm23auxlowerest}, the proof of Theorem \ref{maintheorem} is complete.

\subsection{The proof of Lemma \ref{sandwichlemma}}\label{sandwichlemmaproof}

Let us fix an arbitrary $x>e$. Inequality \eqref{lambdatrivialbounds} for $n=0$ is the elementary
chain $1<\ln(x)<\frac{x}{e}$. We proceed by induction. Suppose
that we have $1<\lambda_n(x)<\frac{x}{e}$ for some $n\in\mathbb{N}$. Then
\[
\lambda_{n+1}(x)=\ln(x)-\ln(\lambda_{n}(x))\in \left( \ln(x)-\ln\left(\frac{x}{e}\right),
 \ln(x)-\ln(1)\right)=
\]
\[
(1,\ln(x))\subset \left(1,\frac{x}{e}\right),
\]
so the induction is complete. This inductive argument shows in particular that the numbers
$\lambda_n(x)$ are real. 

As for \eqref{lambdasandwich}, we prove it on
$(e,+\infty)$ again by induction. The starting step, $\lambda_{1}< \Wn< \lambda_{0}$, is just 
Lemma \ref{lemmaWsimple}. So if we have $\lambda_{2n+1}< \Wn<\lambda_{2n}$
for some $n\in\mathbb{N}$, then by $0<1<\lambda_{2n+1}$ we get
$L_1\circ\lambda_{2n+1}<L_1\circ \Wn<L_1\circ \lambda_{2n}$, that is, 
$L_1-L_1\circ\lambda_{2n+1}>L_1-L_1\circ W>L_1-L_1\circ \lambda_{2n}$, being
the same as $\lambda_{2n+2}>\Wn>\lambda_{2n+1}$
by the functional relation \eqref{functional} and the definition of the recursive sequence \eqref{lambdarecursiondef}. By repeating these manipulations, we get
$\lambda_{2n+3}<\Wn<\lambda_{2n+2}$, so the induction and the proof are complete.

\subsection{The proof of Theorem \ref{lambdatheorem}}\label{sectionlambdatheoremproof}

First we prove the following stronger statement by induction. The constant $\widetilde{x}^*$ 
is defined as $\widetilde{x}^*:=e^{\sqrt{2 \ln (2)}} \sqrt{2 \ln (2)}\in(3.82,3.83)$.
\begin{lem}\label{xtildestarlemma}
For any  $x\ge \widetilde{x}^*$ and $n\in\mathbb{N}$
one has
\begin{equation}\label{lemma53formula}
\lambda_{2n}(x)-\Wn(x)\le (2\ln(2))^n \cdot \frac{\ln(x)-\Wn(x)}{\Wn^{\,2n}(x)}.
\end{equation}
\end{lem}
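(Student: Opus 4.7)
The plan is to prove Lemma \ref{xtildestarlemma} by induction on $n$, reducing everything to a single two-step contraction estimate. Writing $\delta_k(x) := \lambda_k(x) - \Wn(x)$ throughout, the base case $n = 0$ is immediate (both sides equal $\ln x - \Wn(x)$ since $\lambda_0(x) = \ln x$), so the heart of the matter is to prove the two-step contraction
\begin{equation*}
\delta_{2n+2}(x) \;\leq\; \frac{2\ln 2}{\Wn^2(x)}\, \delta_{2n}(x) \qquad (x \geq \widetilde{x}^*),
\end{equation*}
from which iteration yields the claim.

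To derive this contraction, I would combine the functional equation \eqref{functional} with the recursion \eqref{lambdarecursiondef} to get the clean identity $\lambda_{k+1} - \Wn = \ln(\Wn) - \ln(\lambda_k) = -\ln(1 + \delta_k/\Wn)$. For $k = 2n$, Lemma \ref{sandwichlemma} gives $\delta_k > 0$, so the standard inequality $\ln(1+u) \leq u$ produces $|\delta_{k+1}| \leq \delta_k/\Wn$. Applying the identity once more,
\begin{equation*}
\delta_{k+2} \;=\; -\ln\!\left(1 - |\delta_{k+1}|/\Wn\right),
\end{equation*}
and here I would invoke the elementary inequality $-\ln(1-v) \leq (2\ln 2)\, v$ valid for $v \in [0,1/2]$. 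This latter bound holds because both sides vanish at $v = 0$ and $v = 1/2$ while $v \mapsto (2\ln 2)v + \ln(1-v)$ has second derivative $-1/(1-v)^2 < 0$, so the difference is concave on $(0,1)$ and nonnegative between its two zeros. Setting $v = |\delta_{k+1}|/\Wn$ and chaining the two estimates delivers the desired two-step bound.

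The only side condition to verify is that $v \leq 1/2$ whenever the step is applied. By the (strong) inductive hypothesis, $\delta_{2n} \leq (2\ln 2/\Wn^2)^n \delta_0$; since $x \geq \widetilde{x}^*$ forces $\Wn(x) \geq \sqrt{2\ln 2}$, the prefactor is at most one, so $\delta_{2n} \leq \delta_0 = \ln x - \Wn = \ln(\Wn)$ by \eqref{functional}. Therefore $v \leq \ln(\Wn)/\Wn^2$, which is maximized at $\Wn = \sqrt{e}$ with value $1/(2e) < 1/2$, as a one-line calculus check shows; in particular $v \leq 1/(2e)$ for every $\Wn \geq 1$. Iterating the two-step contraction then produces $\delta_{2n} \leq (2\ln 2)^n (\ln x - \Wn)/\Wn^{2n}$, which is exactly \eqref{lemma53formula}.

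The one non-routine step is spotting the sharp constant $2\ln 2$ in $-\ln(1-v) \leq (2\ln 2) v$ on $[0,1/2]$; this is precisely what forces the appearance of $\sqrt{2\ln 2}$ in the threshold $\widetilde{x}^*$, since $\Wn(\widetilde{x}^*) = \sqrt{2\ln 2}$ is exactly the value at which $2\ln 2/\Wn^2 = 1$ and the two-step bound becomes contracting. All remaining ingredients—alternating signs of $\delta_k$ via the sandwich lemma, the inequality $\ln(1+u) \leq u$, and the monotonicity of the even subsequence inherited from the induction itself—are standard and require no delicate estimates.
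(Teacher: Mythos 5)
Your proposal is correct and follows essentially the same route as the paper: an induction built on the chain identity $\lambda_{k+1}-\Wn = -\ln(1+\delta_k/\Wn)$, the elementary bound $\ln(1+u)\le u$ for the odd step, and the sharp-constant concavity inequality $-\ln(1-v)\le(2\ln 2)v$ on $[0,1/2]$ for the even step (this last is precisely the paper's Lemma \ref{2ln2lemma} after the substitution $v=z/(2\ln 2)$). Your side-condition check $v\le\ln(\Wn)/\Wn^2\le 1/(2e)$ is a slightly sharper and more direct version of the paper's invocation of Lemma \ref{Winductiveestimatelemma} with $m=2$ (the $1/(2e)$ bound is noted in the paper only as a remark after that lemma), but the logical skeleton is identical.
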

\begin{proof} We fix any $x\ge \widetilde{x}^*$ in the proof. 
For $n=0$ the claim is trivial, so the induction can be started. 
Suppose that we have already proved  \eqref{lemma53formula} for some $n\in\mathbb{N}$, that is,
we have $\lambda_{2n}(x)\le \Wn(x)\left(1 + (2\ln(2))^n \cdot \frac{\ln(x)-\Wn(x)}{\Wn^{2n+1}(x)}\right)$. 
By Lemma \ref{sandwichlemma},
$1\le \lambda_{2n}(x)$, so we can take logarithms to get
\[
\ln\left(\lambda_{2n}(x)\right)\le \ln(\Wn(x))+\ln\left(1 + (2\ln(2))^n \cdot \frac{\ln(x)-\Wn(x)}{\Wn^{2n+1}(x)}\right).
\]
Now $\widetilde{z}:=(2\ln(2))^n\cdot \frac{\ln(x)-\Wn(x)}{\Wn^{2n+1}(x)}\ge 0$ by Lemma \ref{lemmaWsimple}, 
hence \eqref{functional} and $\ln(1+\widetilde{z})\le \widetilde{z}$ (for $\widetilde{z}>-1$) yield 
$\ln\left(\lambda_{2n}(x)\right)\le \ln(x)-\Wn(x)+(2\ln(2))^n \cdot \frac{\ln(x)-\Wn(x)}{\Wn^{2n+1}(x)}$,
that is
\[
\Wn(x)-\lambda_{2n+1}(x)=\Wn(x)-(\ln(x)-\ln\left(\lambda_{2n}(x)\right))\le (2\ln(2))^n \cdot \frac{\ln(x)-\Wn(x)}{\Wn^{2n+1}(x)}.
\]
By rearranging this we obtain
\[
\Wn(x)\left[ 1-\left(\frac{2\ln(2)}{\Wn^2(x)}\right)^n\cdot \frac{\ln(x)-\Wn(x)}{\Wn^2(x)}\right]\le \lambda_{2n+1}(x).
\]
The assumption $x\ge \widetilde{x}^*$ guarantees that $\frac{2\ln(2)}{\Wn^2(x)}\in(0,1]$, and Lemma \ref{Winductiveestimatelemma} with $m=2$ that $0\le \frac{\ln(x)-\Wn(x)}{\Wn^2(x)}\le \frac{1}{2}$, so the expression in $[\ldots]$ above is positive.  Hence by taking logarithms and using \eqref{functional} again we get
\[
\ln(x)-\Wn(x)+\ln\left( 1-\left(\frac{2\ln(2)}{\Wn^2(x)}\right)^n\cdot \frac{\ln(x)-\Wn(x)}{\Wn^2(x)}\right)\le \ln(\lambda_{2n+1}(x)).
\]
We now use Lemma \ref{2ln2lemma} with $z:=\left(\frac{2\ln(2)}{\Wn^2(x)}\right)^n\cdot \frac{\ln(x)-\Wn(x)}{\Wn^2(x)}\cdot 2\ln(2)\in (0,\ln(2)]$ to decrease the left-hand side and have
\[
\ln(x)-\Wn(x)-\left(\frac{2\ln(2)}{\Wn^2(x)}\right)^n\cdot \frac{\ln(x)-\Wn(x)}{\Wn^2(x)}\cdot 2\ln(2)\le \ln(\lambda_{2n+1}(x)).
\]
Thus 
\[
\lambda_{2n+2}(x)-\Wn(x)=\ln(x)-\ln(\lambda_{2n+1}(x))-\Wn(x)\le \left(2\ln(2)\right)^{n+1}\cdot \frac{\ln(x)-\Wn(x)}{\Wn^{2n+2}(x)},
\]
and the induction is complete.
\end{proof}

To finish the proof of Theorem \ref{lambdatheorem}, we notice, by using Lemma
\ref{lemmaWsimple}, that
\[
(2\ln(2))^n \cdot\frac{\ln(x)-\Wn(x)}{\Wn^{2n}(x)}\le 
(2\ln(2))^n \cdot\frac{\ln(\ln(x))}{(\ln(x)-\ln(\ln(x)))^{2n}}.
\]
By \eqref{ineq1}, the rightmost denominator here is positive, and the restriction $x>x^{***}$ 
guarantees that $\frac{(2\ln(2))^n}{(\ln(x)-\ln(\ln(x)))^{2n}}$ also converges to $0$ as $n\to +\infty$.
The lower estimate $0< \lambda_{2n}(x)-\Wn(x)$ in \eqref{lambdaestimate} has already
been proved in Lemma \ref{sandwichlemma}. Thus, $\displaystyle\lim_{n\to+\infty}\lambda_{2n}(x)=\Wn(x)$. Similarly, due to \eqref{lambdarecursiondef} and \eqref{functional}, for the odd-indexed subsequence and $n\to+\infty$ we have
\[
\lambda_{2n+1}(x)= \ln(x)-\ln(\lambda_{2n}(x))\to \ln(x)-\ln(\Wn(x))=\Wn(x).
\]

\subsection{The proof of Lemma \ref{lem41}}\label{section411stproof}

Lemma \ref{lemmaWsimple} shows that $0<\be_0(x)<\Wn(x)$.\\
\noindent \textbf{Step 1.} Suppose that $0<\be_n(x)<\Wn(x)$ for some $n\in\Nn$. Then, due to \eqref{monotoneequivalence}, we have
$\be_n(x) e^{\be_n(x)}<\Wn(x) e^{\Wn(x)}=x$, so $1+\be_n(x)<1+\ln\left(\frac{x}{\be_n(x)}\right)$, implying 
\[
\be_n(x)<\frac{\be_n(x)}{1+\be_n(x)}\left(1+\ln\left(\frac{x}{\be_n(x)}\right)\right)=\be_{n+1}(x).
\]
\noindent \textbf{Step 2.} Suppose that $0<\be_n(x)<\Wn(x)$ for some $n\in\Nn$. Then---by using \eqref{functional} in the brackets $[\ldots]$ below---we have
\[
\Wn(x)-\be_{n+1}(x)=\Wn(x)-\frac{\be_n(x)}{1+\be_n(x)}\left(1+\ln\left(\frac{x}{\be_n(x)}\right)\right)=
\]
\[
\frac{1}{1+\be_n(x)}\left(\be_n(x)\left[\Wn(x)-\ln\left(\frac{x}{\be_n(x)}\right)\right]+\Wn(x)-\be_n(x)\right)=
\]
\[
\frac{1}{1+\be_n(x)}\left(\be_n(x)\left[\ln(x)-\ln(\Wn(x))-\ln\left(\frac{x}{\be_n(x)}\right)\right]+\Wn(x)-\be_n(x)\right)=
\]
\begin{equation}\label{W0-betan+1identity}
\frac{1}{1+\be_n(x)}\left(\be_n(x)\ln\left(\frac{\be_n(x)}{\Wn(x)}\right)+\Wn(x)-\be_n(x)\right)=
\end{equation}
\[
\frac{\Wn(x)}{1+\be_n(x)}\left(y \ln\left(y\right)+1-y\right)
\]
with $y:={\be_n(x)}/{\Wn(x)}\in(0,1)$. But, due to Lemma \ref{0lemma}, $y\ln(y)+1-y>0$, so $\be_{n+1}(x)<\Wn(x)$.\\
\noindent \textbf{Step 3.} The recursive application of Steps 1--2 completes the proof.

\subsection{The proof of Lemma \ref{sect41lem42}}\label{section412ndproof}

First we use Lemma \ref{lem41} and the identity \eqref{W0-betan+1identity}, then set $z:=\frac{\Wn(x)-\be_n(x)}{\Wn(x)}\in(0,1)$ and use the elementary estimate $\ln(1-z)<-z$ for $z\in(0,1)$  to get
\[
0<\Wn(x)-\be_{n+1}(x)=\frac{\be_n(x)}{1+\be_n(x)}\ln\left(1-\frac{\Wn(x)-\be_n(x)}{\Wn(x)}\right)+\frac{\Wn(x)-\be_n(x)}{1+\be_n(x)}<
\]
\[
\frac{\be_n(x)}{1+\be_n(x)}\cdot\frac{-(\Wn(x)-\be_n(x))}{\Wn(x)}+\frac{\Wn(x)-\be_n(x)}{1+\be_n(x)}=\frac{(\Wn(x)-\be_n(x))^2}{(1+\be_n(x))\Wn(x)}.
\]

\subsection{The proof of Lemma \ref{sect41lem43}}\label{section413rdproof}

The estimate \eqref{lem431stest} simply follows from the definition of $\beta_0(x)$ in \eqref{x>estart} and the earlier estimate \eqref{olderbound} (given in \cite{hoofar}). 

As for \eqref{lem432ndest}, one could find the global maximum of $\frac{e}{e-1}\frac{\ln(\ln (x))}{\ln(x)}$ for $x>e$.  It can be easily shown via differentiation that for $x>e$ we have
\[
\frac{e}{e-1}\frac{\ln(\ln (x))}{\ln(x)}\le \frac{1}{e-1}\approx 0.582,
\]
with equality exactly for $x=e^e$. However, we maximize the quantity $\Wn(x)-\be_0(x)$ directly to get the sharper upper bound $\kappa_1\approx 0.3133$. By the formula for the derivative of the inverse function we have $\Wn'(x)=\frac{\Wn(x)}{x(\Wn(x)+1)}$, so 
\[
(\Wn-\be_0)'(x)=-\frac{\ln (x)-\Wn(x)-1}{x \ln (x)\cdot (\Wn(x)+1) }.
\]
Here the denominator is positive because $x>e$. As for the numerator, its derivative is
\[
(\ln-\Wn-1)'(x)=\frac{1}{x (\Wn(x)+1)}>0,
\]
and
\[
\ln (e)-\Wn(e)-1=-1,\quad \ln \left(e^{e+1}\right)-\Wn\left(e^{e+1}\right)-1=e-\Wn\left(e\cdot e^{e}\right)=0.
\]
This means that $\ln (x)-\Wn(x)-1$ is negative for $x\in\left(e,e^{e+1}\right)$, zero at $x=e^{e+1}$, and positive for $x\in\left(e^{e+1},+\infty\right)$. That is, the function $\Wn-\be_0$ is strictly increasing on $\left(e,e^{e+1}\right)$ and decreasing on $\left(e^{e+1},+\infty\right)$, hence it has a global maximum at $x=e^{e+1}$, and $\Wn\left(e^{e+1}\right)-\be_0\left(e^{e+1}\right)=\ln\left(1+1/e\right)=\kappa_1$. The proof is complete.

\subsection{The proof of Lemma \ref{sect42lem47}}\label{sect42lem47proof}

We need to prove that $0<\Wn(x)<1/5+x/e$ holds for any $0<x<e$. Due to \eqref{monotoneequivalence}, this is equivalent to 
\[
x=\Wn(x) e^{\Wn(x)}<\left(\frac{1}{5}+\frac{x}{e}\right)e^{1/5+x/e}.
\]
Let us set 
\[
f_{\ref{sect42lem47proof}}(x):=\left(\frac{1}{5}+\frac{x}{e}\right)e^{1/5+x/e}-x,
\]
and notice that $f_{\ref{sect42lem47proof}}''(x)>0$, $f_{\ref{sect42lem47proof}}(0)>0$, $f_{\ref{sect42lem47proof}}(e)>0$, so the strictly convex function $f_{\ref{sect42lem47proof}}$ is positive at both endpoints of the interval. By solving $f_{\ref{sect42lem47proof}}'(x)=0$ symbolically, we find that this equation has a unique root at $x^*=e \left(\Wn\left(e^2\right)-{6}/{5}\right)\in(0,e)$, corresponding to the global minimum of $f_{\ref{sect42lem47proof}}$ on $(0,e)$. After some simplification, we get that
\[
f_{\ref{sect42lem47proof}}(x^*)=-\frac{e}{\Wn\left(e^2\right)}-e \Wn\left(e^2\right)+\frac{11 e}{5},
\]
and verify (for example, by using the recursion of Section \ref{IBconvx>e}) that the right-hand side above is positive ($> 0.0017$). This means that $f_{\ref{sect42lem47proof}}>0$ on $(0,e)$, completing the proof.

\subsection{The proof of Lemma \ref{sect43w0<b0}}\label{sect43w0<b0lemmaproof}

The leftmost and rightmost inequalities are obvious.\\
\noindent \textbf{Step 1.} We prove the second inequality first. Since now $-1<\Wn(x)$ is also true, we have, due to \eqref{monotoneequivalence}, that $-1+\sqrt{1+e x}<\Wn(x)$ is equivalent to
\[
(-1+\sqrt{1+e x})\,e^{-1+\sqrt{1+e x}}< \Wn(x)e^{\Wn(x)}=x.
\]
After introducing the new variable $z:=\sqrt{1+e x}\in(0,1)$, the above inequality becomes the obvious one
\[
(z-1)e^{z-1}<\frac{z^2-1}{e}.
\]
\noindent \textbf{Step 2.} We now prove $\Wn(x)<\be_0(x)$ by using the following bijective reparametrization: for any $-1/e<x<0$ there is a unique $y\in(-1,0)$ such that $y e^y=x$, namely, $y=\Wn(x)$. So the inequality $\Wn(x)<\be_0(x)$ becomes
\[
y<\frac{y e^{y+1} \ln \left(1+\sqrt{1+y e^{y+1}}\right)}{1+y e^{y+1}+\sqrt{1+y e^{y+1}}}.
\]
The denominator of this fraction is positive, but $y<0$, so the above is equivalent to
\[
{1+y e^{y+1} +\sqrt{1+y e^{y+1}}-e^{y+1} \ln \left(1+\sqrt{1+y e^{y+1}}\right)}>0.
\]
This left-hand side vanishes at $y=-1$, so it is enough to prove that its derivative (no longer containing a logarithm) is positive for any $-1<y<0$, that is
\[
\frac{\sqrt{1+y e^{y+1}} \left(3-y-4 e^{-y-1}\right)-4 e^{-y-1}-e^{y+1}+y e^{y+1}-3 y+3}{2 \sqrt{1+ye^{y+1} } \left(1+\sqrt{1+ye^{y+1}}\right)}>0.
\]
Here again, the denominator is positive, and, unexpectedly, the numerator can be factorized to yield
 \[
 e^{-y-1} \left(e^{y+1}-1-\sqrt{1+y e^{y+1}}\right) \left(y e^{y+1}-e^{y+1}+2 \sqrt{1+y e^{y+1}}+2\right).
 \]
After some elementary manipulations, we see that each of the three factors above are positive for any $-1<y<0$, completing the proof.

\subsection{The proof of Lemma \ref{lem412}}\label{lem412lemmaproof}

We prove the lemma by induction. 
Lemma \ref{sect43w0<b0} shows that $-1<\Wn(x)<\be_0(x)<0$.\\
\noindent \textbf{Step 1.} Suppose that $-1<\Wn(x)<\be_n(x)<0$ for some $n\in\Nn$. Then \eqref{monotoneequivalence} implies 
$\be_n(x) e^{\be_n(x)}>\Wn(x) e^{\Wn(x)}=x$, so---by carefully noting that now $x, \be_n(x), \Wn(x)\in(-1,0)$---we get $1+\be_n(x)<1+\ln\left(\frac{x}{\be_n(x)}\right)$, therefore 
\[
\be_n(x)>\frac{\be_n(x)}{1+\be_n(x)}\left(1+\ln\left(\frac{x}{\be_n(x)}\right)\right)=\be_{n+1}(x).
\]
\noindent \textbf{Step 2.} Suppose that $-1<\Wn(x)<\be_n(x)<0$ for some $n\in\Nn$. Then
\[
\be_{n+1}(x)-\Wn(x)=\frac{1}{1+\be_n(x)}\left(\be_n(x)\left[\ln\left(\frac{x}{\be_n(x)}\right)-\Wn(x)\right]+\be_n(x)-\Wn(x)\right),
\]
and now \eqref{functional2} is used in the brackets $[\ldots]$ above to get
\begin{equation}\label{lemma412proofstep2id}
\frac{1}{1+\be_n(x)}\left(\be_n(x)\ln\left(\frac{\Wn(x)}{\be_n(x)}\right)+\be_n(x)-\Wn(x)\right)=
\end{equation}
\[
\frac{-\Wn(x)}{1+\be_n(x)}\left(y \ln\left(y\right)-y+1\right)
\]
with $y:={\be_n(x)}/{\Wn(x)}\in(0,1)$. Finally, due to Lemma \ref{0lemma}, $y\ln(y)+1-y>0$, so 
$\Wn(x)<\be_{n+1}(x)$.

\subsection{The proof of Lemma \ref{sect43lem413}}\label{lemma413proof}

The proof is analogous to that of Lemma \ref{sect41lem42}. This time we use Lemma \ref{lem412}, the identity \eqref{lemma412proofstep2id} with $z:=\frac{\be_n(x)-\Wn(x)}{\Wn(x)}\in(-1,0)$, and the  estimate $\ln(1+z)<z$ for $z\in(-1,0)$  to get 
\[
0<\be_{n+1}(x)-\Wn(x)=\frac{-\be_n(x)}{1+\be_n(x)}\ln(1+z)+\frac{\be_n(x)-\Wn(x)}{1+\be_n(x)}<
\]
\[
\frac{-\be_n(x)}{1+\be_n(x)}\cdot\frac{\be_n(x)-\Wn(x)}{\Wn(x)}+\frac{\be_n(x)-\Wn(x)}{1+\be_n(x)}=\frac{(\be_n(x)-\Wn(x))^2}{-\Wn(x)(1+\be_n(x))}.
\]

\subsection{The proof of Lemma \ref{sect43b0w0globmax}}\label{sect43b0w0globmaxlemmaproof}

The first two inequalities below follow from Lemma \ref{sect43w0<b0}:
\[
0<\be_0(x)-\Wn(x)<\be_0(x)-(-1+\sqrt{1+e x})=
\]
\begin{equation}\label{lem412proofaux}
\frac{e x\ln(1+\sqrt{1+e x})}{\sqrt{1+e x}(1+\sqrt{1+e x})}+1-\sqrt{1+e x}.
\end{equation}
It is thus sufficient to upper estimate \eqref{lem412proofaux}. Since $\ln(1+z)> z -\frac{z^2}{2}+\frac{z^3}{3}-\frac{z^4}{4}>0$ for $z\in(0,1)$, and we have $x<0$, by denoting $z:=\sqrt{1+e x}\in(0,1)$ we see that \eqref{lem412proofaux} is further increased by 
\begin{equation}\label{log4Taylor}
\frac{e x\left(z -\frac{z^2}{2}+\frac{z^3}{3}-\frac{z^4}{4}\right)}{z(1+z)}+1-z.
\end{equation}
But now $x=(z^2-1)/e$, so \eqref{log4Taylor} can be rewritten as
\[
-\frac{z(z-1)}{12} \left(3 z^2-4 z+6\right),
\]
and one checks that the global maximum of this quartic polynomial for $z\in(0,1)$ is less than $1/10$ (in fact, it is approximately $0.09928$).

\subsection{The proof of Lemma \ref{highpowerglobmax}}\label{highpowerglobmaxlemmaproof}

Due to $-1<\Wn(x)<\be_0(x)<0$, we have
\[
0<\frac{\be_0(x)-\Wn(x)}{-\Wn(x)\sqrt{1+e x}}<\frac{\be_0(x)-\Wn(x)}{-\be_0(x)\sqrt{1+e x}},
\]
so it is enough to prove that the rightmost expression above is less than $1/10$. This sufficient condition can be rearranged into the form
\begin{equation}\label{lemma416proofaux1}
\frac{e x \left(10+\sqrt{1+e x}\right) \ln \left(1+\sqrt{1+e x}\right)}{10 \left(1+e x+\sqrt{1+e x}\right)}<\Wn(x).
\end{equation}
Let us use again the parametrization $y e^y=x$ with $y\in(-1,0)$ as in Step 2 of the proof of Lemma \ref{sect43w0<b0}. Then \eqref{lemma416proofaux1} becomes
\[
\frac{y e^{y+1} \left(10+\sqrt{1+y e^{y+1}}\right) \ln \left(1+\sqrt{1+y e^{y+1}}\right)}{10 \left(1+y e^{y+1}+\sqrt{1+y e^{y+1}}\right)}<y,
\] 
or, since the denominator is positive for $0<y<1$,
\begin{equation}\label{lemma416proofaux2}
\ln \left(1+\sqrt{1+y e^{y+1}}\right)-\frac{10 \left(1+y e^{y+1}+\sqrt{1+y e^{y+1}}\right)}{e^{y+1} \left(10+\sqrt{1+y e^{y+1}}\right)}>0.
\end{equation}
The left-hand side of \eqref{lemma416proofaux2} vanishes at $y=-1$, so it is enough to prove that its derivative is positive for $-1<y<0$. This derivative can be written as
\[
\frac{e^{-y-1}}{2 \sqrt{1+y e^{y+1}} \left(1+\sqrt{1+y e^{y+1}}\right) \left(10+\sqrt{1+y e^{y+1}}\right)^2}\cdot f_{\ref{highpowerglobmaxlemmaproof}}(y,z),
\]
where
\[
f_{\ref{highpowerglobmaxlemmaproof}}(y,z):=440+10 \sqrt{1+y z} \left(\left(y^2+y+2\right) z^2+(15 y-31) z+44\right)+
\]
\[
y^2 z^2 (z+30)+y z\left(z^2-109 z+370\right)+101 z^2-310z
\]
with $z:=e^{y+1}\in(1,e)$. Then we also have $-1<yz<0$. Clearly, to finish the proof, it suffices to prove that  
$f_{\ref{highpowerglobmaxlemmaproof}}(y,z)>0$ for any $-1<y<0$, $1<z<e$ and $-1<yz<0$. Now, by introducing the new variable $w:=\sqrt{1+y z}\in(0,1)$, the expression $f_{\ref{highpowerglobmaxlemmaproof}}(y,z)$ becomes
\[
 (w+10 )^2 z^2 + (w+1) (w^3+ 9 w^2- 120 w-200  ) z +10 (w+1 )^3 ( w^2+10),
\]
so it is enough to prove that this bivariate polynomial is positive for any $1<z<e$ and $0<w<1$. But its discriminant with respect to $z$, 
$w^2 (w+1)^2 (  w^4- 22 w^3- 999 w^2- 7760 w -1600 )$, is trivially negative for $w\in(0,1)$, completing the proof.

\subsection{The proof of Lemma \ref{sect44lem421}}\label{sect44lem421lemmaproof}

\noindent \textbf{Step 1.} First we prove
\begin{equation}\label{lemma420proof1}
\Wm(x)< \ln (-x)-\ln (-\ln (-x))
\end{equation}
for $x\in(-1/e,0)$ (instead of only for $x\in(-1/4,0)$). Although \eqref{lemma420proof1} is identical to \eqref{W-1bound}, we provide a direct proof for the sake of completeness. By using the bijective reparametrization $x=y e^y$ mentioned in the beginning of Section \ref{subsection44}, \eqref{lemma420proof1} is equivalent to 
\[
y<\ln \left(-y e^y\right)-\ln \left(-\ln \left(-y e^y\right)\right)\quad\quad \text{for \ \  } y<-1,
\]
that is, to
$0<\ln \left(\frac{-y}{-\ln \left(-y e^y \right)}\right)$, which reduces to the obvious inequality $-y>-\ln \left(-y e^y\right)$.\\
\noindent \textbf{Step 2.} Now we prove that
\[
f_{\ref{sect44lem421lemmaproof}}(x):=\ln (-x)-\ln (-\ln (-x))-\Wm(x)<1/2,
\]
again, for any $x\in(-1/e,0)$. We have
\[
f'_{\ref{sect44lem421lemmaproof}}(x)=\frac{f_{\ref{sect44lem421lemmaproof}1}(x)}{x \ln (-x)\left(\Wm(x)+1\right)}
\]
with
\[
f_{\ref{sect44lem421lemmaproof}1}(x):=-\Wm(x)+\ln (-x)-1.
\]
The denominator of $f'_{\ref{sect44lem421lemmaproof}}$ is clearly negative. Moreover,
\[
f'_{\ref{sect44lem421lemmaproof}1}(x)=\frac{1}{x \left(\Wm(x)+1\right)}>0,
\]
so $f_{\ref{sect44lem421lemmaproof}1}$ is strictly increasing on $(-1/e,0)$. But we notice that 
\[f_{\ref{sect44lem421lemmaproof}1}(-e^{1-e})=-\Wm(-e^{1-e})-e=0,\] so $f_{\ref{sect44lem421lemmaproof}1}<0$ on $(-1/e,-e^{1-e})$ and $f_{\ref{sect44lem421lemmaproof}1}>0$ on $(-e^{1-e},0)$. 

This means that $f_{\ref{sect44lem421lemmaproof}}$ is strictly increasing on $(-1/e,-e^{1-e})$, strictly decreasing on $(-e^{1-e},0)$, and it has a global maximum at $x=-e^{1-e}\approx -0.179$ (we remark that 
$f_{\ref{sect44lem421lemmaproof}}(-1/e)$ $=$ $\displaystyle\lim_{0^-}f_{\ref{sect44lem421lemmaproof}}=0$). Since $f_{\ref{sect44lem421lemmaproof}}(-e^{1-e})=1-\ln (e-1)<1/2$, the proof of Step 2 is complete.\\
\noindent \textbf{Step 3.} Next, we show that
\[
f_{\ref{sect44lem421lemmaproof}2}(x):=-1-\sqrt{2} \sqrt{1+e x}-\Wm(x)>0
\]
holds for any $x\in(-1/e,0)$. To this end, we first verify that $f_{\ref{sect44lem421lemmaproof}2}$ is strictly increasing on $(-1/e,0)$.

After applying the reparametrization  $x=y e^y$, and noticing that $y\mapsto y e^y$ is strictly decreasing on $(-\infty,-1)$, we need to verify that  
\[
f_{\ref{sect44lem421lemmaproof}3}(y):=-1-\sqrt{2} \sqrt{1+y e^{y+1}}-y
\]
is strictly decreasing on $(-\infty,-1)$. But 
\[
f'_{\ref{sect44lem421lemmaproof}3}(y)=-1+f_{\ref{sect44lem421lemmaproof}4}(y),
\]
with
\[
f_{\ref{sect44lem421lemmaproof}4}(y):=-\frac{e^{y+1} (y+1)}{\sqrt{2 ye^{y+1} +2}},
\]
so it is enough to show that $f_{\ref{sect44lem421lemmaproof}4}(y)<1$ for any $y<-1$. Clearly, $f_{\ref{sect44lem421lemmaproof}4}(y)<1$ is equivalent to $-e^{y+1} (y+1)<\sqrt{2 y e^{y+1} +2}$, and here both sides are positive---so squaring the inequality is allowed, reducing it to  $0<-e^{2 y+2} (y+1)^2+2 y e^{y+1}+2$. After the substitution $z:=y+1<0$, we are to show $-e^{2 z} z^2+2 e^z (z-1)+2>0$. The left-hand side here vanishes at $z=0$, and its derivative is $-2 e^z z \left(e^z( z+1)-1\right)<0$, finishing the claim. 

Now, as the strict monotonicity of $f_{\ref{sect44lem421lemmaproof}2}$ has been established, notice that $f_{\ref{sect44lem421lemmaproof}2}(-1/e)=0$, so Step 3 is complete.\\
\noindent \textbf{Step 4.} Finally, we show that
\[
-1-\sqrt{2} \sqrt{1+e x}-\Wm(x)<1/2
\]
for any $-1/e<x\le-1/4$. In Step 3 we proved that the left-hand side, $f_{\ref{sect44lem421lemmaproof}2}$ is strictly increasing on $(-1/e,0)$, so it is sufficient to show that 
$f_{\ref{sect44lem421lemmaproof}2}(-1/4)<1/2$. But this last inequality is equivalent to $\Wm\left(-1/4\right)>\left(-3-\sqrt{8-2 e}\right)/2$, being true due to \eqref{reversemonotoneequivalence}, 
hence completing the proof of the lemma.

\subsection{The proof of Lemma \ref{sect44lem419}}\label{sect44lem419lemmaproof}

In \eqref{sect44lem419mon}, the inequality $\be_0(x)<-1$ is elementary, and $\Wm(x)<\be_0(x)$ has been proved in Lemma \ref{sect44lem421}, so we have
\begin{equation}\label{lemma421proofstep0initialinequality}
\Wm(x)<\be_0(x)<-1.
\end{equation}
Now let us formulate two conditional statements in Steps 1a and 1b, to be used in Step 2.

\noindent \textbf{Step 1a.} We claim that if 
\begin{equation}\label{lemma421proofstep1assumption}
\be_n(x)<\Wm(x)<-1
\end{equation}
for some $n\in\mathbb{N}^+$, then $\be_n(x)<\be_{n+1}(x)$. 

Indeed, due to \eqref{reversemonotoneequivalence}, the assumption \eqref{lemma421proofstep1assumption} implies 
$\be_n(x) e^{\be_n(x)}>\Wm(x) e^{\Wm(x)}=x$. By taking into account $x<0$, $\be_n(x)<0$, and $1+\be_n(x)<0$, this leads to  $1+\be_n(x)<1+\ln\left(\frac{x}{\be_n(x)}\right)$, that is, to 
\[
\be_n(x)<\frac{\be_n(x)}{1+\be_n(x)}\left(1+\ln\left(\frac{x}{\be_n(x)}\right)\right)=\be_{n+1}(x).
\]
\noindent \textbf{Step 1b.} Assume in this step that we have $\be_n(x)<-1$ for some $n\in\mathbb{N}$. 

Then $\be_{n+1}(x)$ is well-defined, real, and clearly satisfies
\[
\Wm(x)-\be_{n+1}(x)=\frac{1}{1+\be_n(x)}\left(\Wm(x)-\be_n(x)+\be_n(x)\left[\Wm(x)-\ln\left(\frac{x}{\be_n(x)}\right)\right]\right).
\]
Now by using \eqref{functional3}, $x<0$, $\be_n(x)<0$, and $\Wm(x)<0$, the expression in $[\ldots]$ above is $\ln\left(\frac{\be_n(x)}{\Wm(x)}\right)$, hence 
\begin{equation}\label{lemma421proofstep2aidentity1}
\Wm(x)-\be_{n+1}(x)=\frac{1}{1+\be_n(x)}\left(\Wm(x)-\be_n(x)+\be_n(x)\ln\left(\frac{\be_n(x)}{\Wm(x)}\right)\right),
\end{equation}
or, in other words,
\begin{equation}\label{lemma421proofstep2aidentity2}
\Wm(x)-\be_{n+1}(x)=\frac{\Wm(x)}{1+\be_n(x)}\left(1-\frac{\be_n(x)}{\Wm(x)}+\frac{\be_n(x)}{\Wm(x)}\ln\left(\frac{\be_n(x)}{\Wm(x)}\right)\right).
\end{equation}
\noindent \textbf{Step 2a.} Since $\be_0(x)<-1$ due to \eqref{lemma421proofstep0initialinequality}, we can  consider \eqref{lemma421proofstep2aidentity2} with $n=0$ and with $y:=\frac{\be_0(x)}{\Wm(x)}$. Then $y\in(0,1)$ and $\frac{\Wm(x)}{1+\be_0(x)}>0$, due to \eqref{lemma421proofstep0initialinequality} again. From these, by using Lemma \ref{0lemma}, we conclude that $-1>\Wm(x)>\be_{1}(x)$.\\
\noindent \textbf{Step 2b.} Assume \eqref{lemma421proofstep1assumption}, as an inductive hypothesis,  for some $n\in\mathbb{N}^+$.
For $n=1$, this has been proved in Step 2a, so the induction can be started. Then Step 1a shows that $\be_{n+1}(x)$ is well-defined, real, and satisfies $\be_n(x)<\be_{n+1}(x)$. Moreover---since the assumption of Step 1b is fulfilled---we can apply \eqref{lemma421proofstep2aidentity2} with $y:=\frac{\be_n(x)}{\Wm(x)}$. Then $y>1$ and $\frac{\Wm(x)}{1+\be_n(x)}>0$ are both consequences of the inductive hypothesis  \eqref{lemma421proofstep1assumption}, so Lemma \ref{0lemma} implies $\be_{n+1}(x)<\Wm(x)<-1$.

By taking into account \eqref{lemma421proofstep0initialinequality} also, the above induction verifies \eqref{sect44lem419mon} and the left inequality of \eqref{sect44lem419est} for any $n\in\mathbb{N}^+$.\\
\noindent \textbf{Step 2c.} Let us show the second inequality in \eqref{sect44lem419est} for $n=1$. 

Notice that the assumption of Step 1b is fulfilled because of \eqref{lemma421proofstep0initialinequality}, so we apply \eqref{lemma421proofstep2aidentity1} with $n=0$ and get
\begin{equation}\label{lemma421proofstep2cformula}
\Wm(x)-\be_{1}(x)=\frac{1}{1+\be_0(x)}\left(\Wm(x)-\be_0(x)+\be_0(x)\ln\left(1-z\right)\right)
\end{equation}
with $z:=\frac{\Wm(x)-\be_0(x)}{\Wm(x)}$. Due to \eqref{lemma421proofstep0initialinequality} again, we have $z\in(0,1)$, so we can use the elementary inequality $\ln(1-z)<-z$ (and $1+\be_0(x)<0$) to estimate \eqref{lemma421proofstep2cformula} as
\[
\Wm(x)-\be_{1}(x)<\frac{\Wm(x)-\be_0(x)}{1+\be_0(x)}-\frac{\be_0(x)}{1+\be_0(x)}\cdot\frac{\Wm(x)-\be_0(x)}{\Wm(x)}=
\]
\[
\frac{(\be_0(x)-\Wm(x))^2}{(1+\be_0(x))\Wm(x)}=\left(\be_0(x)-\Wm(x)\right)\cdot \left(\frac{\be_0(x)-\Wm(x)}{|\Wm(x)|\cdot |1+\be_0(x)|}\right),
\]
completing Step 2c.\\
\noindent \textbf{Step 2d.} Finally, we prove the second inequality in \eqref{sect44lem419est} for any $n\ge 2$ by induction. 

The induction can be started, since the second inequality in \eqref{sect44lem419est} for $n=1$ is Step 2c. So let us suppose that 
\begin{equation}\label{lemma421proofstep2finitial}
\Wm(x)-\be_{n}(x)<\left(\be_0(x)-\Wm(x)\right)\cdot \left(\frac{\be_0(x)-\Wm(x)}{|\Wm(x)|\cdot |1+\be_0(x)|}\right)^{-1+2^n}
\end{equation}
holds for some $n\ge 1$. Then we can apply \eqref{lemma421proofstep2aidentity1} (since the assumption of Step 1b is satisfied due to \eqref{sect44lem419mon} we already know), hence
\[
\Wm(x)-\be_{n+1}(x)=\frac{1}{1+\be_n(x)}\left(\Wm(x)-\be_n(x)+\be_n(x)\ln\left(1-z\right)\right),
\]
with $z:=\frac{\Wm(x)-\be_n(x)}{\Wm(x)}$. This time, however, we have $z<0$ due to \eqref{sect44lem419mon}. Nevertheless, the inequality $\ln(1-z)<-z$ still holds, so 
\begin{equation}\label{lemma421proofstep2fn+1}
\Wm(x)-\be_{n+1}(x)<\frac{\Wm(x)-\be_n(x)}{1+\be_n(x)}-\frac{\be_n(x)}{1+\be_n(x)}\cdot \frac{\Wm(x)-\be_n(x)}{\Wm(x)},
\end{equation}
where we have also taken into account that $\frac{\be_n(x)}{1+\be_n(x)}>0$ (being a consequence \eqref{sect44lem419mon}). But the right-hand side of \eqref{lemma421proofstep2fn+1} is equal to $\frac{(\Wm(x)-\be_n(x))^2}{|\Wm(x)|\cdot |1+\be_n(x)|}$, so we proved
\[
\Wm(x)-\be_{n+1}(x)<\frac{(\Wm(x)-\be_n(x))^2}{|\Wm(x)|\cdot |1+\be_n(x)|}.
\]
Notice now that---due to \eqref{sect44lem419mon}---we have $\frac{1}{|1+\be_n(x)|}<\frac{1}{|1+\be_0(x)|}$, therefore 
\begin{equation}\label{lemma421proofstep2ffinal}
\Wm(x)-\be_{n+1}(x)<\frac{(\Wm(x)-\be_n(x))^2}{|\Wm(x)|\cdot |1+\be_0(x)|}.
\end{equation}
The left-hand side of \eqref{lemma421proofstep2finitial} is positive (due to \eqref{sect44lem419mon}), so we can combine \eqref{lemma421proofstep2ffinal} and \eqref{lemma421proofstep2finitial} to get
\[
\Wm(x)-\be_{n+1}(x)<\frac{1}{|\Wm(x)|\cdot |1+\be_0(x)|}\cdot\frac{\Big(\be_0(x)-\Wm(x)\Big)^{2^{n+1}}}{\big(|\Wm(x)|\cdot |1+\be_0(x)|\big)^{-2+2^{n+1}}}=
\]
\[
\left(\be_0(x)-\Wm(x)\right)\cdot \left(\frac{\be_0(x)-\Wm(x)}{|\Wm(x)|\cdot |1+\be_0(x)|}\right)^{-1+2^{n+1}},
\]
completing the induction, and the proof of the lemma.

\subsection{The proof of Lemma \ref{sect44lem422}}\label{sect44lem422lemmaproof}

\noindent \textbf{Step 1.} Let us first consider the case $-1/4<x< 0$. Then, due to Lemma \ref{sect44lem421} and \eqref{sect44lem419mon}, we have
\[
0<\frac{\be_0(x)-\Wm(x)}{|\Wm(x)|\cdot |1+\be_0(x)|}<\frac{1/2}{|\Wm(x)|\cdot |1+\be_0(x)|}<\frac{1/2}{|\be_0(x)|\cdot |1+\be_0(x)|},
\]
proving \eqref{sect44lem22estsharper}. Moreover, it is elementary to check that both $x\mapsto |\ln (-x)-\ln (-\ln (-x)) |$ and $x\mapsto | 1+\ln (-x)-\ln (-\ln (-x))|$ are strictly increasing for $-1/4<x< 0$, and their product satisfies $|\be_0(-1/4)|\cdot |1+\be_0(-1/4)|>1$, so \eqref{sect44lem22estsharper} implies \eqref{sect44lem22est} for $-1/4<x< 0$.\\
\noindent \textbf{Step 2.} Let us consider now the case $-1/e<x\le-1/4$. Then \eqref{sect44lem419mon} yields
\[
0<\frac{\be_0(x)-\Wm(x)}{|\Wm(x)|\cdot |1+\be_0(x)|}<\frac{\be_0(x)-\Wm(x)}{|1+\be_0(x)|}=
\frac{ -1-\sqrt{2} \sqrt{1+e x}-\Wm(x)}{\sqrt{2} \sqrt{1+e x}},
\]
so to prove \eqref{sect44lem22est}, it is sufficient to show that the right-hand side above is less than $1/2$. This last sufficient condition (RHS $<1/2$) is equivalent to
\[
2 \Wm(x)+3 \sqrt{2+2 e x}+2>0,
\]
which, after the reparametrization $x=y e^y$, becomes
\begin{equation}\label{lemma422Step2proof}
2 y+3 \sqrt{2+2 y e^{y+1} }+2>0.
\end{equation}
It is enough to prove \eqref{lemma422Step2proof} for $-2.2<y<-1$, because the range of the function
$(-2.2,-1) \ni y\mapsto y e^y$ includes the interval $(-1/e,-1/4]$. But for $-2.2<y<-1$,  \eqref{lemma422Step2proof} is equivalent to $0<2+2 y e^{y+1}-\left(\frac{2 y+2}{3}\right)^2$, or, to
$9-2 z^2+9 e^z (z-1)>0$ after the shift $z:=y+1\in(-1.2,0)$. On this interval, the degree-6 Taylor polynomial of the exponential function about the origin is greater than $e^z$, so $9-2 z^2+9 e^z (z-1)$ is decreased by replacing $e^z$ with its degree-6 Taylor approximation. This way we get $\frac{1}{80} z^2 \left(z^2+5 z+10\right) \left(z^3+14 z+20\right)$, which is clearly positive for $z\in(-1.2,0)$, completing the proof.

\subsection{The proof of Lemma \ref{smallWestimate}}\label{section33}

The first, fourth and fifth inequalities are elementary, and the third one is analogous
to the second one, so here we prove only the second inequality.
Since on  $(-1/4,0)$ we have
$-1<\frac{1}{2} \left(\sqrt{1+4 x}-1\right)$ and $-1<\Wn(x)$,  due to 
\eqref{monotoneequivalence}, 
$\frac{1}{2} \left(\sqrt{1+4 x}-1\right)<\Wn(x)$ is equivalent to
\begin{equation}\label{second33}
\frac{1}{2} \left(\sqrt{1+4 x}-1\right)\exp\left({\frac{1}{2} \left(\sqrt{1+4 x}-1\right)}\right) <x.
\end{equation}
Now there is a unique $y\in (-1/2,0)$ such that $y=(\sqrt{1+4 x}-1)/2$, so \eqref{second33}
is equivalent to $y e^y<y(1+y)$, that is to $e^y>1+y$. The proof is complete.

\subsection{The proof of Lemma \ref{betalemma}}\label{section31}
 
For any $x>e$, $\beta(x)>-1$ is equivalent to
\[
f_{\ref{section31}}(x):=\frac{x(x-1)}{x+e^2-2 e}-\ln (x)>0.
\]
Notice that $f_{\ref{section31}}(e)=0$. The proof will be complete as soon as we show that $f_{\ref{section31}}'>0$ on $(e,+\infty)$.
We have  
$f_{\ref{section31}}'(x)=N_{\ref{section31}}(x)/\left(x \left(x+e^2-2e\right)^2\right)$
with 
\[
N_{\ref{section31}}(x):=x^3+(2 (e-2) e-1) x^2-3 (e-2) e x-(e-2)^2 e^2.
\]
But $N_{\ref{section31}}$ has 3 real roots on $(-\infty,2)$, so $N_{\ref{section31}}>0$ on $(e,+\infty)$, completing the proof. 

\subsection{The proof of Lemma \ref{lemma25}}\label{section32}

 Let us denote the left-hand side of \eqref{lemma25LHS} by $f_{\ref{section32}}(x)$. Notice that
$\displaystyle f_{\ref{section32}}(e)=0=\lim_{\infty} f_{\ref{section32}}$, so the proof is finished as soon as we have 
shown that there is 
a unique $x_{\ref{section32}}^*>e$ such that $f_{\ref{section32}}$ is strictly increasing on $(e,x_{\ref{section32}}^*)$ 
and strictly decreasing on $(x_{\ref{section32}}^*,+\infty)$  (we remark that $x_{\ref{section32}}^*\approx 22.04$).
To this end, we write the derivative of $f_{\ref{section32}}$ as $f_{\ref{section32}}'=N_{\ref{section32}}/D_{\ref{section32}}$ with
\[
N_{\ref{section32}}(x):=(x-1) P_{\ref{section32}1}(x)-P_{\ref{section32}2}(x) \ln (x),
\]
\[
D_{\ref{section32}}(x):=x^2 (x-1)^2 \left(x+e^2-2 e\right),
\]
\[
P_{\ref{section32}1}(x):=((e-2) e+2) x^2+2 (e-2) e x+(e-2)^2 e^2
\]
and
\[
P_{\ref{section32}2}(x):=(x+(e-2) e) \left(x^2+2 (e-2) e x+(2-e) e\right).
\]
We check that $D_{\ref{section32}}$ and $P_{\ref{section32}2}$ are both positive on $(e,+\infty)$, so the sign of 
$f_{\ref{section32}}'(x)$ is determined by that of $N_{\ref{section32}}(x)$, or by that of
\[
f_{\ref{section32}3}(x):=\frac{(x-1) P_{\ref{section32}1}(x)}{P_{\ref{section32}2}(x)}-\ln (x).
\] 
Now for $x>e$ we have that
\[
f_{\ref{section32}3}'(x)=-\frac{(x-e) (x+e-2) P_{\ref{section32}4}(x)}{x P_{\ref{section32}2}^2(x)}
\]
with
\[
P_{\ref{section32}4}(x):=x^4-e(e-2) (3 (e-2) e-1) x^3-e(e-2)  ((e-2) e-1) (1+4 (e-2) e) x^2+\]
\[
(e-2)^2 e^2 (1+5 (e-2) e) x-(e-2)^3 e^3.
\]
The polynomial $P_{\ref{section32}4}$ has 3 real roots in $(-\infty,e)$ and a unique root in 
$(e,+\infty)$ at $x_{\ref{section32}4}^*$ (we have $x_{\ref{section32}4}^*\approx 10.67$). From this we see that 
$-P_{\ref{section32}4}$ is positive on $(e,x_{\ref{section32}4}^*)$ and negative on $(x_{\ref{section32}4}^*,+\infty)$. This means that
$f_{\ref{section32}3}'$ is positive on $(e,x_{\ref{section32}4}^*)$ and negative on $(x_{\ref{section32}4}^*,+\infty)$. But 
$f_{\ref{section32}3}(e)=0$ and $\displaystyle \lim_{\infty} f_{\ref{section32}3}=-\infty$, so $f_{\ref{section32}3}$ has a unique root
at some $x_{\ref{section32}}^*$ with $x_{\ref{section32}}^*>x_{\ref{section32}4}^*>e$, and $f_{\ref{section32}3}>0$ on $(e,x_{\ref{section32}}^*)$ and $f_{\ref{section32}3}<0$ on $(x_{\ref{section32}}^*,+\infty)$. Therefore, $f_{\ref{section32}}'>0$ on $(e,x_{\ref{section32}}^*)$ and $f_{\ref{section32}}'<0$ on $(x_{\ref{section32}}^*,+\infty)$,
and the proof is complete.

\subsection{Auxiliary estimates}\label{section35}

Here we state some auxiliary inequalities used in the earlier sections. For brevity, some elementary proofs are omitted.

\begin{lem}\label{0lemma}
For $y\in(0,1)\cup(1,+\infty)$ one has
$
y \ln (y)+1-y>0
$.
\end{lem}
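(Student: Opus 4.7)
Define $g(y) := y\ln(y) + 1 - y$ on $(0, +\infty)$. The plan is to show that $y=1$ is the unique global minimum of $g$ and that $g(1)=0$, from which the claim follows immediately.

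First I would compute the boundary value $g(1) = 1 \cdot \ln(1) + 1 - 1 = 0$, so the inequality $g(y) > 0$ reduces to showing that $g$ is strictly greater than its value at $1$ on the punctured domain. Next, I would differentiate: $g'(y) = \ln(y) + 1 - 1 = \ln(y)$, so $g'(y) < 0$ for $y \in (0,1)$ and $g'(y) > 0$ for $y \in (1, +\infty)$. Thus $g$ is strictly decreasing on $(0,1)$ and strictly increasing on $(1, +\infty)$, which makes $y=1$ the unique global minimum on $(0, +\infty)$.

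Combining these two observations yields $g(y) > g(1) = 0$ for every $y \in (0,1) \cup (1, +\infty)$, which is exactly the claimed inequality. There is no real obstacle here; the only thing worth double-checking is the behavior at the endpoint $y \to 0^+$, where $y\ln(y) \to 0$ and hence $g(y) \to 1 > 0$, consistent with the monotonicity picture.
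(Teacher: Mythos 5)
Your proof is correct and takes essentially the same approach as the paper: define $g(y)=y\ln(y)+1-y$, compute $g'(y)=\ln(y)$, and conclude from the sign of $g'$ on either side of $y=1$ together with $g(1)=0$ that $g>0$ away from $1$.
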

\begin{proof}
We set $g(y):=y \ln (y)+1-y$. Since $g'(y)=\ln(y)<0$ for $0<y<1$, $g'(y)>0$ for $y>1$, and $g(1)=0$, the proof is complete.
\end{proof}

\begin{lem}\label{1lemma}
Inequality $y\ge 1$ implies
$
y^2-y \ln (y)+\ln (y)>0
$.
\end{lem}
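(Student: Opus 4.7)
The plan is to prove the inequality via the elementary bound $\ln(y)\le y-1$ (valid for all $y>0$, with equality only at $y=1$), which keeps the argument free of case analysis.

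First I would rewrite the expression as $y^2-(y-1)\ln(y)$, so that at $y=1$ it equals $1>0$, giving the base case. For $y>1$, both $y-1$ and $\ln(y)$ are positive, and applying $\ln(y)\le y-1$ yields $(y-1)\ln(y)\le(y-1)^2$. Since $(y-1)^2=y^2-2y+1<y^2$ whenever $y>1/2$ (in particular for $y\ge 1$), we obtain $y^2-(y-1)\ln(y)\ge y^2-(y-1)^2>0$, which is precisely the claim.

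As an alternative consistent with the derivative-based style used elsewhere in the paper, one could set $g(y):=y^2-y\ln(y)+\ln(y)$, compute $g'(y)=2y-\ln(y)-1+1/y$ and $g''(y)=2-1/y-1/y^2$, observe that $g''(y)\ge 0$ on $[1,+\infty)$ so that $g'$ is nondecreasing there, and conclude from $g'(1)=2>0$ that $g$ is strictly increasing on $[1,+\infty)$, hence $g(y)\ge g(1)=1>0$.

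There is no substantive obstacle; the only thing to be careful about is to ensure the strict inequality, which is secured either by the strictness of $(y-1)^2<y^2$ for $y\ge 1$ in the first approach, or by $g(1)=1>0$ in the second.
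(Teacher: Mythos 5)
Your proposal is correct. The paper actually omits the proof of this lemma entirely (``For brevity, some elementary proofs are omitted''), so there is no argument in the paper to compare against; both of your routes are valid fillings of the gap. The factored form $y^2-(y-1)\ln(y)$ combined with $\ln(y)\le y-1$ is the cleaner of the two, reducing the claim to $y^2-(y-1)^2=2y-1>0$; the convexity argument ($g''\ge 0$ on $[1,\infty)$, $g'(1)=2>0$, $g(1)=1>0$) matches the style the paper uses for neighboring lemmas such as Lemma~\ref{0lemma} and Lemma~\ref{2ln2lemma}. Either one suffices, and both handle the boundary case $y=1$ and the strictness correctly.
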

\begin{lem}\label{72lemmasimple}
For $1<y\le 7/2$ one has
$
y+(1-y) \ln(y)>0
$.
\end{lem}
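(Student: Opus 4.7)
The plan is to study the function $g(y):=y+(1-y)\ln(y)$ on the closed interval $[1,7/2]$ and show that it exceeds $0$ there (with strict inequality on $(1,7/2]$) by a unimodality argument combined with a single numerical endpoint check. First I would compute $g'(y)=1/y-\ln(y)$ and $g''(y)=-1/y^2-1/y<0$ on $(0,+\infty)$, so $g'$ is strictly decreasing. Since $g'(1)=1>0$ and $g'(y)\to-\infty$ as $y\to+\infty$, there is a unique $y^*>1$ with $g'(y^*)=0$; consequently $g$ is strictly increasing on $(1,y^*)$ and strictly decreasing on $(y^*,+\infty)$. In particular, on $[1,7/2]$ the function $g$ is unimodal, so its minimum over this compact interval is attained at one of the endpoints.

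At the left endpoint $g(1)=1>0$, which is immediate. So the only substantive step is to verify that $g(7/2)>0$, i.e., that $7/2-(5/2)\ln(7/2)>0$, which is equivalent to $\ln(7/2)<7/5$, or $e^{7/5}>7/2$. I would prove this by a rational Taylor lower bound for $e^{7/5}$: truncating the exponential series gives
\[
e^{7/5}>1+\tfrac{7}{5}+\tfrac{1}{2}\bigl(\tfrac{7}{5}\bigr)^2+\tfrac{1}{6}\bigl(\tfrac{7}{5}\bigr)^3+\tfrac{1}{24}\bigl(\tfrac{7}{5}\bigr)^4,
\]
and the right-hand side is a specific rational number that one checks by direct arithmetic to exceed $7/2$. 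Combining this with $g(1)=1$ and the unimodality shows $g(y)\ge\min\{g(1),g(7/2)\}>0$ on $[1,7/2]$, which proves the claim on $(1,7/2]$.

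There is no real obstacle: the argument is entirely elementary once one notices the concavity of $g$ in disguise (through $g''<0$), and the only quantitative issue is choosing enough Taylor terms in the exponential so that the endpoint bound $g(7/2)\approx 0.37>0$ becomes rigorous. Four terms beyond the constant already suffice, so the verification is purely rational arithmetic and requires no transcendental machinery.
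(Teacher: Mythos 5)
Your argument is correct and complete. The paper explicitly omits the proof of this lemma as elementary, so there is no author proof to compare against; your unimodality argument is a clean way to fill the gap. You correctly compute $g'(y)=\tfrac{1}{y}-\ln(y)$ and $g''(y)=-\tfrac{1}{y^2}-\tfrac{1}{y}<0$, so $g'$ is strictly decreasing with $g'(1)=1>0$; this makes $g$ rise-then-fall on $(1,+\infty)$ and hence attain its minimum over $[1,7/2]$ at an endpoint, reducing the claim to $g(1)=1>0$ and $g(7/2)=\tfrac{7}{2}-\tfrac{5}{2}\ln\tfrac{7}{2}>0$, the latter equivalent to $e^{7/5}>\tfrac{7}{2}$ and settled by the rational Taylor lower bound you give. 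One small remark: the truncation through the cubic term already suffices, since $1+\tfrac{7}{5}+\tfrac{49}{50}+\tfrac{343}{750}=\tfrac{2878}{750}>\tfrac{2625}{750}=\tfrac{7}{2}$, so the quartic term you include is not needed, though it does no harm.
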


\begin{lem}\label{72lemma}
For $y>7/2$ we have 
\begin{equation}\label{lemma72label}
2 y^3-2 y^2 \ln(y)+y(\ln^2(y)-\ln(y)+1)-\ln^2(y)+\ln(y)>0.
\end{equation}
\end{lem}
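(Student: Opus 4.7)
The plan is to treat the left-hand side of \eqref{lemma72label} as a quadratic polynomial in the auxiliary variable $z$, with the intention of substituting $z = \ln(y)$ only at the end. Grouping the terms of \eqref{lemma72label} by powers of $z$ gives
\[
F(y,z) := (y-1)\,z^2 \;-\; (2y^2+y-1)\,z \;+\; (2y^3+y).
\]
For $y>7/2$ the leading coefficient $y-1$ is strictly positive, so $F(y,\cdot)$ is an upward-opening parabola in $z$. Consequently it suffices to verify that its discriminant
\[
\Delta(y) := (2y^2+y-1)^2 - 4(y-1)(2y^3+y)
\]
is strictly negative for $y>7/2$; this will imply $F(y,z)>0$ for \emph{all} $z\in\mathbb{R}$, and in particular $F(y,\ln(y))>0$, which is exactly \eqref{lemma72label}. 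Observe that this strategy is stronger than needed (it ignores the fact that $z=\ln(y)>\ln(7/2)>1$ in the range of interest), but it avoids any transcendental reasoning.

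The second step is a routine expansion, which yields
\[
\Delta(y) = -4y^4 + 12y^3 - 7y^2 + 2y + 1.
\]
To show $\Delta(y)<0$ on $(7/2,+\infty)$, I would argue by repeated differentiation in the style used throughout the appendix. First, $\Delta''(y) = -48y^2 + 72y - 14$ is a downward parabola whose apex lies at $y=3/4$; in particular $\Delta''$ is strictly decreasing on $[7/2,+\infty)$, and one checks directly that $\Delta''(7/2)<0$, so $\Delta''<0$ there. Hence $\Delta'$ is strictly decreasing on $[7/2,+\infty)$, and a direct evaluation shows $\Delta'(7/2)<0$, so $\Delta'<0$ throughout. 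Finally, $\Delta$ is strictly decreasing on $[7/2,+\infty)$, and one computes $\Delta(7/2)<0$; this finishes the proof.

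The only genuine obstacle is bookkeeping: one must carry out the expansion of $\Delta(y)$ and the three rational evaluations at $y=7/2$ without error. Everything else is automatic, and the argument produces no logarithmic or exponential terms, in keeping with the paper's philosophy of reducing transcendental inequalities to polynomial ones.
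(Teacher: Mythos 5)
Your proof is correct and takes essentially the same route as the paper: both group the left-hand side as the quadratic
\[
P(y,z)=(y-1)z^2-(2y^2+y-1)z+(2y^3+y)
\]
in $z=\ln(y)$, observe that the leading coefficient $y-1$ is positive for $y>7/2$, and then argue that $P(y,\cdot)>0$ for \emph{all} real $z$. The only cosmetic difference is that the paper shows the minimum value $P(y,z_*)$ is positive, while you show the discriminant $\Delta(y)$ is negative; since $P(y,z_*)=-\Delta(y)/\bigl(4(y-1)\bigr)$, these are the same condition. Your expansion $\Delta(y)=-4y^4+12y^3-7y^2+2y+1$ checks out (e.g.\ at $y=4$ one gets $\Delta(4)=-359$ and $P(4,z_*)=359/12$, which agree via the relation above). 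As a side benefit this reveals a typographical slip in the paper's proof: the displayed polynomial $20y^4+4y^3-5y^2-10y+3$ should read $4y^4-12y^3+7y^2-2y-1$; fortunately both are positive on $(7/2,+\infty)$, so the paper's conclusion is unaffected.
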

\begin{proof} We set
$P_{\ref{section35}}(y,z):=(y-1) z^2+\left(-2 y^2-y+1\right) z+(2 y^3+y)$, then the left-hand side of
\eqref{lemma72label} is equal to $P_{\ref{section35}}(y,\ln(y))$. We prove that $P_{\ref{section35}}(y,z)>0$
for $y>7/2$ and $z\in\mathbb{R}$. For any $y>7/2$, the quadratic polynomial 
$z\mapsto P_{\ref{section35}}(y,z)$
has positive leading coefficient, and its global minimum is located at
$z_{\ref{section35}}:=-\frac{-2 y^2-y+1}{2 (y-1)}$. But 
\[
P_{\ref{section35}}(y,z_{\ref{section35}})=\frac{20 y^4+4 y^3-5 y^2-10 y+3}{4 (y-1)}>0
\]
for $y>7/2$.
\end{proof}

\begin{lem}\label{56lemma}
For $y>0$ we have
$
\ln (y)<\frac{5 }{6} \sqrt{y}$.
\end{lem}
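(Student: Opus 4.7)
The plan is to show the inequality via a single-variable minimization argument. Define the auxiliary function
\[
f(y) := \tfrac{5}{6}\sqrt{y} - \ln(y), \qquad y > 0,
\]
so that the claim becomes $f(y) > 0$ on $(0, +\infty)$.

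First I would differentiate to locate the critical points:
\[
f'(y) = \frac{5}{12\sqrt{y}} - \frac{1}{y} = \frac{5\sqrt{y} - 12}{12\, y}.
\]
The sign of $f'$ is governed by the sign of $5\sqrt{y} - 12$, so $f'(y) < 0$ on $(0, 144/25)$ and $f'(y) > 0$ on $(144/25, +\infty)$. Hence $f$ attains its global minimum on $(0,+\infty)$ at $y^* := 144/25$, and the proof reduces to verifying
\[
f(y^*) = \tfrac{5}{6}\cdot\tfrac{12}{5} - \ln\!\left(\tfrac{144}{25}\right) = 2 - \ln\!\left(\tfrac{144}{25}\right) > 0.
\]

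The only step that requires any real care is this final numerical verification, and even that is elementary: the inequality $2 - \ln(144/25) > 0$ is equivalent to $144/25 < e^2$, which follows from $144/25 = 5.76 < 7.29 < e^2$ (using for instance the coarse bound $e > 2.7$, so $e^2 > 7.29$). Since $f$ is continuous on $(0,+\infty)$, strictly decreasing on $(0, y^*]$, strictly increasing on $[y^*,+\infty)$, and strictly positive at its unique minimizer, we conclude $f > 0$ on $(0,+\infty)$, which is the desired inequality $\ln(y) < \tfrac{5}{6}\sqrt{y}$. No obstacle of substance arises; the argument is purely a textbook critical-point analysis plus one coarse numerical estimate.
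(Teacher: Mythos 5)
Your proof is correct, and it is the natural approach for this statement. Note that the paper explicitly omits a proof of Lemma \ref{56lemma} (the preamble to Section \ref{section35} says ``for brevity, some elementary proofs are omitted''), so there is no paper argument to compare against; your single-variable critical-point analysis, locating the global minimum of $y\mapsto \tfrac{5}{6}\sqrt{y}-\ln(y)$ at $y^*=144/25$ and checking $2-\ln(144/25)>0$ via $144/25 < e^2$, is exactly the kind of elementary argument the authors evidently had in mind, and every step checks out.
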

\begin{lem}\label{elementarylemma}
On the interval $(e,+\infty)$, the following inequalities hold:
\begin{equation}\label{ineq1}
L_1-L_2+\frac{L_2}{L_1}>L_1-L_2>1,
\end{equation}
\begin{equation}\label{ineq2}
-\frac{4}{5}<\frac{L_2}{L_1^2}-\frac{L_2}{L_1}<0,
\end{equation}
\begin{equation}\label{ineq3}
0<\frac{e}{e-1}\frac{L_2}{L_1}<1.
\end{equation}
\end{lem}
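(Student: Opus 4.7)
The plan is to reduce each of the three inequalities to an elementary one-variable claim by setting $t := L_1(x) = \ln(x)$, so that $t > 1$ corresponds exactly to $x > e$, and then to verify each by standard calculus.

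First, for \eqref{ineq1}: the leftmost inequality $L_1 - L_2 + L_2/L_1 > L_1 - L_2$ is equivalent to $L_2/L_1 > 0$, which is immediate because both $L_2 = \ln(\ln(x))$ and $L_1 = \ln(x)$ are positive on $(e, +\infty)$. The rightmost inequality $L_1 - L_2 > 1$ reduces, with $t := L_1(x)$, to $h(t) := t - \ln(t) - 1 > 0$ for $t > 1$; since $h(1) = 0$ and $h'(t) = 1 - 1/t > 0$ on $(1, +\infty)$, this holds.

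For \eqref{ineq2}, I would factor $L_2/L_1^2 - L_2/L_1 = (L_2/L_1)(1/L_1 - 1)$. On $(e, +\infty)$ the first factor is positive and the second is negative (since $L_1 > 1$), so the product is strictly negative, yielding the upper bound $<0$. For the lower bound $-4/5$, I would invoke the elementary inequality $\ln(t) \le t/e$ valid for all $t > 0$ (a one-line check: $\phi(t) := t/e - \ln(t)$ satisfies $\phi'(e) = 0$, $\phi''(t) = 1/t^2 > 0$, and $\phi(e) = 0$, so $\phi \ge 0$). Applied with $t := L_1$ this gives $L_2 \le L_1/e$, whence
\[
\frac{L_2}{L_1} - \frac{L_2}{L_1^2} = \frac{L_2\,(L_1 - 1)}{L_1^2} \le \frac{L_1 - 1}{e\, L_1} < \frac{1}{e} < \frac{4}{5},
\]
which, after negation, is precisely $L_2/L_1^2 - L_2/L_1 > -1/e > -4/5$.

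For \eqref{ineq3}, the left inequality is immediate from the positivity of $L_1$ and $L_2$ on $(e,+\infty)$. The right inequality is equivalent to $L_2/L_1 < (e-1)/e$, and I would derive it from the classical estimate $\ln(t)/t \le 1/e$ for $t > 0$ (maximum attained at $t = e$, verified analogously to the bound used above). Applied with $t := L_1$, this yields $L_2/L_1 \le 1/e$, and since $e > 2$ gives $1/e < (e-1)/e$, the claim follows. There is no real obstacle: each inequality reduces to a standard one-variable estimate after the substitution $t = L_1$, and the only minor design choice is picking a clean bound---here $\ln(t) \le t/e$---that handles both \eqref{ineq2} and \eqref{ineq3} with room to spare below the stated thresholds $4/5$ and $(e-1)/e$.
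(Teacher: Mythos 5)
Your proof is correct. The overall strategy---substituting $t := L_1(x)$ to reduce each of the three claims to a one-variable inequality on $(1,+\infty)$, then analyzing with elementary calculus---is the same as the paper's (which uses $y := \ln(x)$). The handling of \eqref{ineq1} and the upper bound in \eqref{ineq2} is essentially identical: both proofs observe that $L_1 - L_2$ has value $1$ at $x=e$ and positive derivative, and that $L_2/L_1^2 - L_2/L_1 = -L_2(L_1-1)/L_1^2 < 0$. Where you diverge is in the choice of auxiliary lemma for the lower bound in \eqref{ineq2} and the upper bound in \eqref{ineq3}: the paper reduces these to two separate ad hoc inequalities, $4y^2 - 5(y-1)\ln(y) > 0$ and $\ln(y) < (e-1)y/e$ for $y > 1$, each needing its own (omitted) verification, whereas you invoke the single classical estimate $\ln(t) \le t/e$ (equivalently $\ln(t)/t \le 1/e$) and apply it with $t = L_1$ in both cases. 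This unification is a genuine improvement in economy, and as a bonus it yields the sharper bound $L_2/L_1^2 - L_2/L_1 > -1/e$, from which $-4/5$ follows with room to spare. The one-line convexity proof of $\ln(t) \le t/e$ ($\phi(t)=t/e-\ln(t)$ has $\phi''>0$, $\phi(e)=\phi'(e)=0$) is clean and correct.
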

\begin{proof} We have $(L_1-L_2)'(x)=\frac{\ln (x)-1}{x \ln (x)}>0$ and 
$(L_1-L_2)(1)=1$, proving \eqref{ineq1}. 

The upper bound in \eqref{ineq2} is just
$\left(\frac{L_2}{L_1^2}-\frac{L_2}{L_1}\right)(x)=-\frac{(\ln (x)-1) \ln(\ln (x))}{\ln^2(x)}<0$, while the lower
bound in \eqref{ineq2} is equivalent to the elementary inequality $0<4 y^2-5 (y-1) \ln(y)$
with $y:=\ln(x)$ for $y>1$. 

The lower bound in \eqref{ineq3} is trivial. With $y:=\ln(x)$ again, the upper bound is the elementary inequality $\ln (y)<\frac{(e-1) y}{e}$.
\end{proof}

\begin{lem}\label{2ln2lemma}
For any $z\in\left[ 0, \ln(2)\right]$, one has
$-z\le \ln\left(1-\frac{z}{2\ln(2)}\right)$.
\end{lem}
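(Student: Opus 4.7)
The plan is to rewrite the claim as the non-negativity of a single auxiliary function and prove it by a short concavity argument. Define
\[
f_{\ref{section35}}(z):=\ln\!\left(1-\frac{z}{2\ln(2)}\right)+z,\qquad z\in[0,\ln(2)],
\]
so that the desired inequality becomes $f_{\ref{section35}}(z)\ge 0$ on this interval. Note that $1-z/(2\ln(2))\ge 1/2>0$ for $z\in[0,\ln(2)]$, so $f_{\ref{section35}}$ is well-defined and smooth.

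First I would compute the two endpoint values:
$f_{\ref{section35}}(0)=\ln(1)+0=0$, and
$f_{\ref{section35}}(\ln(2))=\ln(1-1/2)+\ln(2)=-\ln(2)+\ln(2)=0$.
Next I would verify concavity by differentiating twice:
\[
f_{\ref{section35}}''(z)=-\frac{1}{(2\ln(2)-z)^{2}}<0
\]
for all $z\in[0,\ln(2)]$. Hence $f_{\ref{section35}}$ is strictly concave on $[0,\ln(2)]$.

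Finally, a strictly concave function on a closed interval that vanishes at both endpoints is non-negative throughout the interval: for any $z\in[0,\ln(2)]$ writing $z=(1-t)\cdot 0+t\cdot \ln(2)$ with $t:=z/\ln(2)\in[0,1]$ and applying the concavity inequality yields $f_{\ref{section35}}(z)\ge (1-t)\,f_{\ref{section35}}(0)+t\,f_{\ref{section35}}(\ln(2))=0$, which is exactly the claim. There is essentially no obstacle here; the only small point to double-check is that the argument of the logarithm stays strictly positive on $[0,\ln(2)]$, which was already observed above and ensures both $f_{\ref{section35}}$ and $f_{\ref{section35}}''$ are defined throughout the closed interval.
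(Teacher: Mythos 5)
Your proof is correct and uses essentially the same argument as the paper: establish equality at the endpoints $z=0$ and $z=\ln(2)$, check concavity via the second derivative, and conclude by the chord property of concave functions. The only cosmetic difference is that you fold the linear term $z$ into an auxiliary function $f$ and argue non-negativity of $f$, while the paper compares the concave logarithm directly to the line $-z$; these are the same idea.
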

\begin{proof} Notice that $-z=\ln\left(1-\frac{z}{2\ln(2)}\right)$ holds for $z=0$ and $z=\ln(2)$. 
Moreover, the second derivative of the right-hand side is negative on 
$\left[ 0, \ln(2)\right]$, so $z\mapsto\ln\left(1-\frac{z}{2\ln(2)}\right)$ is concave. The proof is complete.
\end{proof}
\begin{lem}\label{Winductiveestimatelemma}
For any $x\ge e$ and $m\in\mathbb{N}^+$, we have
\[
0\le \frac{\ln(x)-\Wn(x)}{\Wn^{\,m}(x)}\le \frac{1}{\sqrt{2m}}.
\]
\end{lem}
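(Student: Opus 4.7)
The plan is to use the identity \eqref{functional} to rewrite the numerator as a logarithm, then reduce everything to a single-variable optimization problem. Specifically, since $x \ge e$ implies $\Wn(x) \ge 1$, and \eqref{functional} yields $\ln(x) - \Wn(x) = \ln(\Wn(x))$, we have
\[
\frac{\ln(x)-\Wn(x)}{\Wn^{\,m}(x)} = \frac{\ln(\Wn(x))}{\Wn^{\,m}(x)}.
\]
Setting $w := \Wn(x)$, the bounds become $0 \le \frac{\ln(w)}{w^m} \le \frac{1}{\sqrt{2m}}$, to be proved for all $w \ge 1$ and $m \in \mathbb{N}^+$.

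The lower bound is immediate from $\ln(w) \ge 0$ and $w^m > 0$. For the upper bound, I would study the auxiliary function $g_m(w) := \ln(w)/w^m$ on $[1,+\infty)$. Differentiating,
\[
g_m'(w) = \frac{1 - m\ln(w)}{w^{m+1}},
\]
so $g_m$ has its unique interior critical point at $w_m^* := e^{1/m}$, with $g_m$ strictly increasing on $[1, w_m^*)$ and strictly decreasing on $(w_m^*, +\infty)$, while $g_m(1) = 0$ and $g_m(w) \to 0$ as $w \to +\infty$. Hence the global maximum on $[1,+\infty)$ equals
\[
g_m(w_m^*) = \frac{1/m}{e^{m\cdot 1/m}} = \frac{1}{m e}.
\]

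It then remains to verify that $\frac{1}{me} \le \frac{1}{\sqrt{2m}}$ for every $m \in \mathbb{N}^+$, which is equivalent to $2m \le m^2 e^2$, i.e., $2 \le m e^2$. Since $e^2 > 7 > 2$, this holds even for $m = 1$, and the conclusion follows. I do not anticipate a genuine obstacle here: the only non-routine aspect is noticing the substitution via \eqref{functional} that collapses the inequality to a one-variable calculus problem, after which everything is elementary. Note that the bound $1/\sqrt{2m}$ is not sharp (the sharp bound being $1/(me)$), but the stated form is convenient for its use in Lemma~\ref{xtildestarlemma} with $m = 2$.
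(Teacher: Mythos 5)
Your proof is correct, and it takes a genuinely different and arguably cleaner route than the paper's. The key observation you make is that \eqref{functional} rewrites the numerator as $\ln(\Wn(x))$, so the substitution $w:=\Wn(x)$ collapses the two-sided inequality to the single-variable problem of bounding $g_m(w)=\ln(w)/w^m$ on $[1,+\infty)$, which you solve exactly by elementary calculus, finding the maximum $1/(me)$ at $w=e^{1/m}$. The paper instead avoids any optimization: it estimates along the chain $\Wn^m\ge(L_1-L_2)^m\ge\sqrt{2m}\,L_2\ge\sqrt{2m}\,(L_1-\Wn)$, where the outer inequalities come from Lemma~\ref{lemmaWsimple} and the middle one reduces, via $z:=\ln(\ln(x))$, to $(e^z-z)^m\ge z\sqrt{2m}$, handled by truncating the exponential series and completing a square. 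What your approach buys is the sharp constant $1/(me)$ directly (which the paper only states without proof in the remark following the lemma, noting equality exactly at $x=e^{1/m}\exp(e^{1/m})$, i.e.\ at $w=e^{1/m}$, which is precisely your critical point); the weaker $1/\sqrt{2m}$ then follows from $2\le me^2$ as you observe. What the paper's chain buys is that the quantities $L_1-L_2$ and $L_2$ already appear in the application (Lemma~\ref{xtildestarlemma} and the proof of Theorem~\ref{lambdatheorem}), so the inequalities in that chain are reused, whereas your argument is more self-contained. Both proofs are valid; yours is shorter and obtains the optimal constant as a byproduct.
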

\begin{proof} We know from Lemma \ref{lemmaWsimple} 
that
$\frac{L_1-\Wn}{\Wn^m}$ is non-negative on $[e,+\infty)$. As for the upper estimate, 
let us consider the chain
\[
\Wn^m\ge (L_1-L_2)^m \ \boxed{\ge} \ \sqrt{2m} \, L_2 \ge \sqrt{2m}\, (L_1-\Wn).
\]
Here the first and third inequalities hold due to  
Lemma \ref{lemmaWsimple} again (taking also into account that $L_1-L_2>0$ because of \eqref{ineq1}), so it is enough to show  the second one.
But inequality $ \boxed{\ge}$ is equivalent to
$\left(e^z-z\right)^m\ge z\sqrt{2m}$,
to be proved for any $z:=\ln(\ln(x))\ge 0$. By using the series expansion of $\exp$ around $0$ and the binomial theorem we get
\[
\left(e^z-z\right)^m\ge \left(1+\frac{z^2}{2}\right)^m\ge 1+m\cdot \frac{z^2}{2}=
\left(1-z\sqrt{\frac{m}{2}}\right)^2+z\sqrt{2m}\ge z \sqrt{2m},
\]
completing the proof.
\end{proof}

\begin{rem} One can actually prove an estimate which is sharper than the one in Lemma \ref{Winductiveestimatelemma}. Namely, for any $x\ge e$ and $m\in\mathbb{N}^+$ we have
\[
0\le \frac{\ln(x)-\Wn(x)}{\Wn^{\,m}(x)}\le \frac{1}{m e},
\]
and equality in the upper estimate occurs exactly for $\displaystyle x=\sqrt[m]{e}\cdot \exp\left(\sqrt[m]{e}\right)$.
\end{rem}

\begin{lem}\label{thm23auxupperest}
For any $x\ge x^*\approx 6288.69$  (defined in Section \ref{refinedapproximationsW}), we have
\[
L_1(x)-\ln
   \left(L_1(x)-L_2(x)+\frac{L_2(x)}{L_1(x)}\right)<L_1(x)-L_2(x)+\frac{L_2(x)}{L_1(x)}+\frac{\left(L_2(x)-2\right) L_2(x)}{2 L_1(x){}^2}+ \frac{L_2(x){}^3}{L_1(x){}^3}.
\]
\end{lem}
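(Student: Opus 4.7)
The plan is to reduce the stated inequality to a one-dimensional bound for $-\ln(1-u)$ and then control the excess via a single Taylor-type remainder. Setting $a := L_1(x)$ and $b := L_2(x) = \ln a$, I factor
\[
L_1(x) - L_2(x) + \frac{L_2(x)}{L_1(x)} = a\left(1 - \frac{b}{a} + \frac{b}{a^2}\right) = a(1 - u), \qquad u := \frac{b(a-1)}{a^2},
\]
and use the identity $\ln a = b$ to collapse the target inequality (after subtracting $L_1 = a$ and cancelling $-b$ on both sides) into
\[
-\ln(1 - u) < \frac{b}{a} + \frac{(b-2)b}{2a^2} + \frac{b^3}{a^3}.
\]
The hypothesis $a \ge y^* > e$ together with Lemma \ref{elementarylemma} guarantees $0 < u < b/a < 1$, so all further reciprocals of $1-u$ are well defined.

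Next, I would apply the Taylor bound
\[
-\ln(1-u) \le u + \frac{u^2}{2} + \frac{u^3}{3(1-u)} \qquad (0 < u < 1),
\]
obtained by dominating $\sum_{k \ge 3} u^k/k$ by the geometric tail $u^3/(3(1-u))$. A direct expansion of $u + u^2/2$ in powers of $1/a$ produces the exact identity
\[
\mathrm{RHS} - \left(u + \frac{u^2}{2}\right) = \frac{b^3 + b^2}{a^3} - \frac{b^2}{2 a^4} = \frac{b^2\bigl(2a(b+1) - 1\bigr)}{2 a^4},
\]
which is manifestly positive. Thus the proof reduces to the single inequality
\[
\frac{u^3}{3(1-u)} < \frac{b^2\bigl(2a(b+1) - 1\bigr)}{2 a^4},
\]
and substituting $u = b(a-1)/a^2$, $1-u = (a^2 - ab + b)/a^2$, and then clearing the strictly positive denominators, transforms this into the polynomial-type statement
\[
2b(a-1)^3 < 3\bigl(2a(b+1) - 1\bigr)\bigl(a^2 - ab + b\bigr), \qquad a \ge y^*,\ b = \ln a.
\]

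The main obstacle is the transcendental constraint $b = \ln a$ in this last inequality. My plan to close it is to exploit that $\frac{d}{da}(b/a) = (1 - b)/a^2 < 0$ for $a > e$, so $b/a$ is strictly decreasing on $[y^*, +\infty)$ and hence uniformly bounded by its value at $y^*$; an elementary numerical check (using $y^* \approx 8.75$ and $\ln y^* \approx 2.17$) gives $b(y^*)/y^* < 1/2$. With the resulting bound $b \le a/2$ one has $a^2 - ab + b > a^2/2$ (since $b(a-1) < ab < a^2/2$) and $2a(b+1) - 1 > 2ab$; multiplying these two lower bounds yields $\mathrm{RHS} > 6ab \cdot (a^2/2) = 3a^3 b$, while the left-hand side is bounded above by $2a^3 b < 3a^3 b$. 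This closes the inequality for all $a \ge y^*$, and the specific value of $y^*$ enters only through the single numerical bound $b(y^*)/y^* < 1/2$.
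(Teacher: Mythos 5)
Your proof is correct, and it takes a genuinely different — and considerably slicker — route than the paper's.

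\textbf{Comparison with the paper.} The paper's proof (Section \ref{section35}) sets $y=\ln x$, considers the difference function
\[
f(y):=-\ln\Bigl(y-\ln y+\tfrac{\ln y}{y}\Bigr)+\ln y-\tfrac{\ln y}{y}-\tfrac{(\ln y-2)\ln y}{2y^2}-\tfrac{\ln^3 y}{y^3},
\]
verifies $f(8)<0$ and $\lim_{+\infty}f=0$, and then shows $f'>0$ on $(8,\infty)$ by writing $f'$ as a rational expression in $y$ and $\ln y$, isolating a mixed polynomial $f_{\ref{section35}2}(y)$ in $y,\ln y$, and then performing eight rounds of differentiation (clearing denominators at each step) until a manifestly positive expression appears. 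It is a monotonicity argument hinging on heavy symbolic differentiation. Your proof instead factors the argument of the logarithm as $a(1-u)$ with $u=b(a-1)/a^2$, collapses the inequality to $-\ln(1-u)<\mathrm{RHS}$, observes that $\mathrm{RHS}-(u+u^2/2)$ simplifies exactly (the $1/a$ and $1/a^2$ coefficients cancel) to $b^2\bigl(2a(b+1)-1\bigr)/(2a^4)>0$, dominates the Taylor tail by $u^3/(3(1-u))$, and reduces to the polynomial inequality $2b(a-1)^3<3\bigl(2a(b+1)-1\bigr)(a^2-ab+b)$, which the crude bound $b<a/2$ settles at a glance. This avoids the differentiation cascade entirely and makes the reason for the cutoff $x^*$ transparent: all that is used is $b/a<1/2$ (in fact $\ln a/a\le 1/e<1/2$ for \emph{all} $a>0$, so your monotonicity remark at $y^*$ is not even needed). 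Everything I have checked — the reduction $\ln(a-b+b/a)=b+\ln(1-u)$, the cancellation identity, the positivity of the residue $b^2(2a(b+1)-1)/(2a^4)$, the tail bound $\sum_{k\ge3}u^k/k<u^3/(3(1-u))$ for $0<u<1$, and the final chain $2b(a-1)^3<2a^3b<3a^3b<3\cdot2ab\cdot(a^2/2)\le 3(2a(b+1)-1)(a^2-ab+b)$ — is accurate. Your approach is more elementary, more modular, and more informative about where the constant $x^*$ enters; the paper's is more mechanical but self-contained in the sense that it requires no structural insight beyond differentiation.
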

\begin{proof} By taking the difference of the two sides LHS$-$RHS above, and introducing the new variable $y:=\ln(x)$, it is enough to prove that
\[
f_{\ref{section35}1}(y):=-\ln \left(y-\ln (y)+\frac{\ln (y)}{y}\right) +\ln(y)  -\frac{\ln (y)}{y}-\frac{(\ln (y)-2) \ln (y)}{2 y^2}-\frac{\ln ^3(y)}{y^3}<0
\]
for, say, $y\in(8,+\infty)$ (since $\ln(x^*)>8$). But $f_{\ref{section35}1}(8)<0$ and $\displaystyle \lim_{+\infty} f_{\ref{section35}1}=0$, so the proof will be finished as soon as we have shown that 
$f_{\ref{section35}1}'>0$ on $(8,+\infty)$, where
\begin{equation}\label{f'A91}
f_{\ref{section35}1}'(y)=\frac{f_{\ref{section35}2}(y) \cdot \ln (y)}{y^4 \left(y^2-y \ln (y)+\ln (y)\right)}
\end{equation}
with
\[
f_{\ref{section35}2}(y):=y-2 y^2+(y^2-3y) \ln(y)+\left(2 y^2+4 y-3\right) \ln^2(y)-3 (y-1) \ln^3(y).
\]
Due to $y^2-y \ln (y)+\ln (y)>y(y-\ln(y))$, the denominator of \eqref{f'A91} is positive, so it is enough to show that $f_{\ref{section35}2}>0$ on $(8,+\infty)$. To this end, we verify that $f_{\ref{section35}2}(8)>0$ and it is strictly increasing on $(8,+\infty)$. To show that it is increasing, we recursively check that its derivative at $y=8$ is positive and increasing on $(8,+\infty)$. After 8 recursive steps of this kind, we arrive at the expression $8 \left(2 \ln ^2(y)+25 \ln (y)+66\right)$, which is clearly positive on $(8,+\infty)$. (During the process, we also put the intermediate results over a common denominator and consider only the numerator for the next step, since the denominator $y$ is positive.)
\end{proof}

\begin{lem}\label{thm23auxlowerest}
For any $x\ge x^*\approx 6288.69$  (defined in Section \ref{refinedapproximationsW}), we have
\[
L_1(x)-\ln
   \left(L_1(x)-L_2(x)+\frac{L_2(x)}{L_1(x)}+\frac{\left(L_2(x)-2\right) L_2(x)}{2 L_1(x){}^2}+ \frac{L_2(x){}^3}{L_1(x){}^3} \right)>
\]
\[
L_1(x)-L_2(x)+\frac{L_2(x)}{L_1(x)}+\frac{(L_2(x)-2) L_2(x)}{2 L_1^2(x)}-\frac{3L_2^2(x)}{2L_1^3(x)}.
\]
\end{lem}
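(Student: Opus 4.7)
The plan is to mirror the proof of Lemma \ref{thm23auxupperest}. Subtracting the right-hand side from the left-hand side and cancelling the common summand $L_1(x)$, then introducing the new variable $y := \ln(x)$ (so that $L_1(x) = y$ and $L_2(x) = \ln(y)$), the claim reduces to showing
\[
h(y) := \ln(y) - \frac{\ln(y)}{y} - \frac{(\ln(y)-2)\ln(y)}{2 y^2} + \frac{3 \ln^2(y)}{2 y^3} - \ln\!\left( y - \ln(y) + \frac{\ln(y)}{y} + \frac{(\ln(y)-2)\ln(y)}{2 y^2} + \frac{\ln^3(y)}{y^3} \right) > 0
\]
on an interval $[y_0,+\infty)$ with $y_0 \le \ln(x^*)$; since $\ln(x^*) > 8$, the convenient choice $y_0 = 8$ suffices. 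The argument of the outer logarithm is exactly the upper bound for $\Wn$ from Lemma \ref{thm23auxupperest}, which (by inequalities already established there) is strictly positive on $[8,+\infty)$, so $h$ is well-defined.

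The strategy has two pieces. First, verify that $\displaystyle\lim_{y\to +\infty} h(y) = 0$: write the outer logarithm as $\ln(y) + \ln(1 + u(y))$ with $u(y) = o(1)$ and expand via the Mercator series $\ln(1+u) = u - u^2/2 + u^3/3 - \cdots$, checking that every term of order $\ge L_2/y$ cancels against the explicit rational terms of $h$, leaving a remainder that vanishes as $y\to+\infty$. Second, prove that $h'(y) < 0$ on $(8,+\infty)$; combined with the vanishing limit, this forces $h$ to decrease strictly to $0$, hence to be positive throughout $[8,+\infty)$ (a sanity check, such as $h(8) \approx 6.5 \cdot 10^{-3} > 0$, confirms consistency).

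The main obstacle is the monotonicity statement $h' < 0$. After differentiation and placing everything over a common denominator one obtains $h'(y) = \dfrac{N(y,\ln y)}{y^4\,\Phi(y,\ln y)}$, where $\Phi$ denotes the argument of the outer logarithm (positive by the above) and $N$ is an explicit weighted polynomial expression in $y$ and $\ln y$. The task reduces to proving $N < 0$ on $(8,+\infty)$. Following exactly the recursive strategy deployed at the end of the proof of Lemma \ref{thm23auxupperest}, I would verify $N(8,\ln 8) < 0$ together with the sign of $\partial_y N$, iterating the differentiation-plus-boundary-evaluation procedure until one reaches a manifestly signed expression such as a linear combination of $\ln^k(y)$ with positive coefficients. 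If the pure iteration stalls, Lemma \ref{56lemma} together with the lower bound $\ln y > \ln 8$ can be invoked to confine $(y,\ln y)$ to a two-dimensional region on which $N$ becomes a bivariate polynomial whose negativity can be checked by the partial-derivative cascade used in the proof of Lemma \ref{9007lemma}. The bookkeeping is lengthy but purely mechanical and introduces no conceptual ideas beyond those already present in the earlier proofs.
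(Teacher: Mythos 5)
The paper gives no proof of this lemma, stating only that it is ``analogous to (but more technical than) that of Lemma~\ref{thm23auxupperest}, hence it is omitted for brevity''; your proposal is precisely that analogue, with the signs correctly mirrored (you show $h>0$ via $h'<0$ and $\lim_{+\infty}h=0$, in place of $f_{\ref{section35}1}<0$ via $f'_{\ref{section35}1}>0$ and $\lim_{+\infty}f_{\ref{section35}1}=0$), and the well-definedness of the outer logarithm correctly justified since each added term in $\Phi$ is nonnegative for $y\ge 8>e^2$. Like the paper, you leave the mechanical recursive-differentiation verification of the sign of $h'$ implicit, so your proposal matches the paper's intended approach at the same level of detail.
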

\begin{proof} The proof is analogous to (but more technical than) that of Lemma \ref{thm23auxupperest}, hence it is omitted for brevity.
\end{proof}

\end{document}